\DeclareRobustCommand{\arr}{%
  \mathrel{\mathpalette\short@to\relax}%
}
\newcommand{\short@to}[2]{%
  \mkern2mu
  \clipbox{{.3\width} 0 0 0}{$\m@th#1\vphantom{+}{\shortrightarrow}$}%
  }
\theoremstyle{plain}
    \newtheorem{theorem}{Theorem}[section]
    \newtheorem{proposition}[theorem]{Proposition}
    \newtheorem{lemma}[theorem]{Lemma}
    \newtheorem*{question*}{Question}
\theoremstyle{definition}
    \newtheorem{definition}[theorem]{Definition}
    \newtheorem{example}[theorem]{Example}
    \newtheorem{remark}[theorem]{Remark}
\def\Alphabet{A,B,C,D,E,F,G,H,I,J,K,L,M,N,O,P,Q,R,S,T,U,V,W,X,Y,Z}
\def\alphabet{a,b,c,d,e,f,g,h,i,j,k,l,m,n,o,p,q,r,s,t,u,v,w,x,y,z}
\def\endpiece{xxx}
\def\makeAlphabet[#1]{\expandafter\makeA#1,xxx,}
\def\makealphabet[#1]{\expandafter\makea#1,xxx,}
\def\makeA#1,{\def\temp{#1}\ifx\temp\endpiece\else%
\mkbb{#1}\mkfrak{#1}\mkbf{#1}\mkcal{#1}\mkscr{#1}\mkbs{#1}\expandafter\makeA\fi}%
\def\makea#1,{\def\temp{#1}\ifx\temp\endpiece\else\mkfrak{#1}\mkbf{#1}\mkbs{#1}\expandafter\makea\fi}%
\def\mkbb#1{\expandafter\def\csname bb#1\endcsname{\mathbb{#1}}}
\def\mkfrak#1{\expandafter\def\csname fr#1\endcsname{\mathfrak{#1}}}
\def\mkbf#1{\expandafter\def\csname b#1\endcsname{\mathbf{#1}}}
\def\mkcal#1{\expandafter\def\csname c#1\endcsname{\mathcal{#1}}}
\def\mkscr#1{\expandafter\def\csname s#1\endcsname{\mathscr{#1}}}
\def\mkbs#1{\expandafter\def\csname bs#1\endcsname{{\boldsymbol{#1}}}}
\def\makeop[#1]{\xmakeop#1,xxx,}
\def\mkop#1{\expandafter\def\csname #1\endcsname{{\mathrm{#1}}}} %
\def\xmakeop#1,{\def\temp{#1}\ifx\temp\endpiece\else\mkop{#1}\expandafter\xmakeop\fi}%
\def\makeup[#1]{\xmakeup#1,xxx,}
\def\mkup#1{\expandafter\def\csname #1\endcsname{{\mathrm{#1}\,}}} %
\def\xmakeup#1,{\def\temp{#1}\ifx\temp\endpiece\else\mkup{#1}\expandafter\xmakeup\fi}%
\def\e{\eta}
\def\La{\Lambda}
\def\nabe{\nabla_{\!e}}
\def\FC{\sF\kern-0.5mm\sC}
\def\R{\bbR}
\def\Z{\bbZ}
\def\C#1{\Consv^\phi(#1)}
\def\abs#1{|#1|}
\def\Dabs#1{\bigl|\kern-0.3mm\bigl|#1\bigr|\kern-0.3mm\bigr|}
\def\vp{\varphi}
\def\N{\bbN}
\def\bsxi{\boldsymbol{\xi}}
\begin{document}

\setcounter{tocdepth}{1}

\title[Crystal Lattices]{Varadhan's Decomposition of Shift-Invariant Closed Uniform Forms for Large Scale Interacting Systems on General Crystal Lattices}
\author[Bannai]{Kenichi Bannai$^{*\diamond}$}\email{bannai@math.keio.ac.jp}
\author[Sasada]{Makiko Sasada$^{\star\diamond}$}\email{sasada@ms.u-tokyo.ac.jp}
\thanks{This work was supported in part by JST CREST Grant Number JPMJCR1913, KAKENHI 18H05233,
and the UTokyo Global Activity Support Program for Young Researchers.}
\address{${}^*$Department of Mathematics, Faculty of Science and Technology, Keio University, 3-14-1 Hiyoshi, Kouhoku-ku, Yokohama 223-8522, Japan.}

\address{${}^\star$Department of Mathematics, University of Tokyo, 3-8-1 Komaba, Meguro-ku, Tokyo 153-8914, Japan.}
\address{${}^\diamond$Mathematical Science Team, RIKEN Center for Advanced Intelligence Project (AIP),1-4-1 Nihonbashi, Chuo-ku, Tokyo 103-0027, Japan.}

\date{\today  \quad (Version $\beta$ -- for limited distribution)}

\begin{abstract}
	We prove a uniform version of Varadhan decomposition for shift-invariant 
	closed uniform forms 
	associated to large scale interacting systems
	on general crystal lattices.	
	In particular, this result includes the case of
	translation invariant processes on Euclidean lattices $\Z^d$ with finite range.
	Our result generalizes the result of \cite{BKS20} which was 
	valid for systems on transferable graphs.
	In subsequent research \cite{BS22L2}, we will use the result of this article
	to prove Varadhan's decomposition
	of closed $L^2$-forms for large scale interacting systems on general crystal lattices.
\end{abstract}

\subjclass[2010]{Primary: 82C22, Secondary: 05C63, 55N91, 60J60, 70G40} 
\maketitle

\tableofcontents

%
%
%
\section{Introduction}\label{sec: intro}
%
%
%

\emph{Varadhan's Decomposition of shift-invariant closed $L^2$-forms}
has played an important role in the strategy 
of proof of the hydrodynamic limit for 
\emph{non-gradient} models \cite{KL99}*{\S7}.
See the introduction of \cite{BKS20} for a short overview of hydrodynamic limits
and Varadhan's strategy.
In this article, we prove a \emph{uniform version} of such decomposition
for large scale interacting systems on a general \emph{crystal lattice}.
In particular, our result is valid for various translation invariant
processes on Euclidean lattices $\Z^d$ with finite range.
This generalizes our result in \cite{BKS20} for systems on locales which are transferable
or weakly transferable.  
The simple case of the exclusion process was previously announced in \cite{KS16}.
However, whilst the rough outline of proof remains similar, many new ideas, such as the 
origin of the conserved quantities and the notion for an interaction to be irreducibly quantified,
as well as the strategy
to reduce the proof of the key theorem \cref{thm: 1} to the 
case of essentially Euclidean crystal lattices 
were required to grasp and generalize many of the arguments in \cite{KS16} which a priori relied on properties
specific to the exclusion process.
Due to such conceptual advancements, the result of this article is valid for far general interactions, including models with multiple conserved quantities.
Although there has been some research concerning the hydrodynamic 
limit on crystal lattices (\cite{T12}, more recently \cites{F20,G20}), 
the known cases for interacting systems on non-Euclidean lattices are all for the gradient case
-- there are no known examples in the non-gradient case.

A \emph{crystal lattice}, or a \emph{topological crystal}
in the terminology of \cite{Sun13}*{\S6.2}, 
is any connected locally finite multi-graph $\cX=(X,E)$ for the set of vertices $X$
and  set of edges $E$, with a free action of a finitely generated abelian group $G$
of rank $>0$ such that the quotient graph $\cX/G=(X/G,E/G)$ is a strictly symmetric finite multi-graph.
See \S\ref{subsec: graphs} for terminology concerning graphs.
The dimension $\dim\cX$ is defined to be the rank of $G$.
The \emph{Euclidean lattices}, \emph{triangular lattice}, \emph{hexagonal lattice}, 
and the \emph{diamond lattice} are all examples of crystal lattices.

We consider a non-empty set $S$ which we call the \emph{set of states}, and $S^X\coloneqq\prod_{x\in X}S$
the \emph{configurations} on $X$.  We fix a map $\phi\colon S\times S\rightarrow S\times S$
called an \emph{interaction} satisfying a certain symmetry condition which specifies the
interaction of states on adjacent vertices.
Models such at the \emph{generalized exclusion process} or the
\emph{multi-species exclusion process} may be described using such interaction
(see \cite{BKS20}*{Example 2.18} for other examples).
One  important invariant of the interaction is the \emph{conserved quantity},
which is a normalized map $\xi\colon S\rightarrow\R$ satisfying  
\[
	\xi(s'_1)+\xi(s'_2)=\xi(s_1)+\xi(s_2),
\]
where $(s'_1,s'_2)=\phi(s_1,s_2)$ for any $(s_1,s_2)\in S\times S$.
We denote by $\C{S}$ the $\R$-linear space of conserved quantities.
Then $c_\phi\coloneqq\dim_\R\C{S}$ gives the number of independent  
conserved quantities of our large scale interacting system.

Via the interaction, for any edge $e\in E$, a configuration $\e=(\e_x)\in S^X$ transitions
to a configuration  $\e^e=(\e^e_x)\in S^X$ whose component 
differs only on the origin and target of the edge $e$.
Such transitions give a structure of a multi-graph on $S^X$.
We denote by $C(S^X)$ and $C^1(S^X)$ the space of $\R$-valued functions and forms on $S^X$,
where  a form is a system of functions $\omega=(\omega_e)_{e\in E}$ 
satisfying some minor conditions, where  $\omega_e$ for each $e\in E$ is a function on $S^X$.

We may  define the differential $\partial_{\cX}\colon C(S^X)\rightarrow C^1(S^X)$
by $\partial_{\cX} f\coloneqq(\nabe f)_{e\in E}$, where
\[
	\nabe f(\e)\coloneqq f(\e^e)-f(\e),\qquad \forall\e\in S^X.
\]
Functions and forms are \emph{local} if it depends only on states at a finite number of 
vertices of $X$.  In our previous work \cite{BKS20} we extended the notion of 
local functions to that of \emph{uniform functions}, which allow for infinite sums of 
normalized local functions.  Let $C_\unif(S^X)$ and $C^1_\unif(S^X)$ be the space
of uniform functions and forms.  If we denote by $Z^1_\unif(S^X)$
the space of closed uniform form, consisting of uniform forms whose integration
over a path depends only on the origin and the target of the path,
then the differential induces the differential
\[
	\partial_{\cX}\colon C_\unif(S^X)\rightarrow Z^1_\unif(S^X).
\]
The group $G$ acts naturally on $C_\unif(S^X)$ and $Z^1_\unif(S^X)$, and
the action is compatible with the differential $\partial_{\cX}$.
We let
\begin{align*}
	\cC&\coloneqq Z^1_\unif(S^X)^G,&
	\cE&\coloneqq \partial_{\cX}(C_\unif(S^X)^G)
\end{align*}
the space of shift-invariant closed uniform forms and
shift-invariant uniform exact forms.

We say that an interaction is \emph{irreducibly quantified}  
if the following condition holds for any finite $\cX=(X,E)$: 
If two configurations $\e=(\e_x),\e'=(\e'_x)\in S^X$ 
satisfy $\sum_{x\in X}\xi(\e_x)=\sum_{x\in X}\xi(\e'_x)$ for any
conserved quantity $\xi\in\C{S}$, then there exists a path from $\e$ to $\e'$
in $S^X$.  We say that an interaction is \emph{simple}, if $c_\phi=1$,
and for any non-zero $\xi\in\C{S}$, the  monoid generated by 
$\xi(S)\subset\R$ is isomorphic to $\N$ or $\Z$.

Our main theorem is as follows.

\begin{theorem}[=\cref{thm: main}]
	Let $\cX=(X,E)$ be a crystal lattice of dimension $d=\dim\cX$,
	and consider an interaction which is irreducibly quantified.
	We assume in addition that the interaction is simple if $d=1$.
	Then we have a decomposition
	\[
		\cC=\cE\oplus\partial_{\cX}\cV,
	\]
	where $\partial_{\cX}\cV\subset\cC$ is a certain subspace given in \cref{lem: cd}
	of dimension $c_\phi d$, where $c_\phi=\dim_\R\C{S}$.
\end{theorem}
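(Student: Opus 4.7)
The plan is to separate the proof into the direct-sum assertion $\cE\cap\partial_{\cX}\cV=0$ and the spanning assertion $\cC=\cE+\partial_{\cX}\cV$, with the latter being the main content (the inclusion $\partial_{\cX}\cV\subset\cC$ and the dimension count $c_\phi d$ being supplied by \cref{lem: cd}). For the direct-sum part, I would construct $c_\phi d$ linear functionals on $\cC$ that vanish on $\cE$ and that separate the generators of $\partial_{\cX}\cV$ diagonally: fixing a $\Z$-basis $\gamma_1,\dots,\gamma_d$ of $G$ and a basis $\xi_1,\dots,\xi_{c_\phi}$ of $\C{S}$, the pair $(\xi_k,\gamma_j)$ yields a period-type pairing obtained by integrating a closed form along a finite path joining a reference configuration to its $\gamma_j$-translate, with a weight provided by $\xi_k$. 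Closedness makes the integral depend only on endpoints, shift-invariance makes the resulting map descend past shifts, and exactness forces vanishing.

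The spanning is the heart of the theorem. My plan is to first \emph{reduce to the essentially Euclidean case}, i.e.\ the situation in which $\cX$ arises as the maximal abelian cover of its finite quotient $\cX/G$, by passing to a finite-index torsion-free subgroup of $G$ and transporting the forms and the exact subspace along the covering. Since the quotient $\cX/G$ is a strictly symmetric finite multi-graph, this reduction is essentially combinatorial and preserves both $\cC$ and $\cE$ up to identifying the respective $G$-invariant subspaces. Once on an essentially Euclidean crystal lattice, I would follow the strategy pioneered in \cite{KS16} and formalised in \cite{BKS20}: given $\omega\in\cC$, construct a sequence of uniform primitives $f_n\in C_\unif(S^X)^G$ by iteratively solving, on finite boxes of increasing size in the fundamental domain, a localised Poisson-type equation whose solvability is furnished precisely by the irreducibly quantified hypothesis — this hypothesis ensures that configuration fibres of fixed values of all $\xi_k$ are connected under the transition structure, so the local cohomology vanishes except in the directions captured by $\partial_{\cX}\cV$.

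The main obstacle will be controlling the limit $f=\lim f_n$ in the uniform topology while preserving shift-invariance, so that $\omega-\partial_{\cX}f\in\partial_{\cX}\cV$. In dimension $d=1$ there is too little room for the spatial averaging that makes the uniform estimates close, and it is exactly here that the \emph{simple} hypothesis is forced upon us: the rank-one arithmetic of $\xi(S)$ allows one to control the primitives by integer-valued invariants rather than by averaging, providing the crucial quantitative substitute. For $d\geq 2$, the obstacle is rather organisational — ensuring that the box-wise primitives glue coherently across translates of the fundamental domain and that the decomposition $\cC=\cE\oplus\partial_{\cX}\cV$ on the essentially Euclidean cover truly descends to $\cC$ on $\cX$ — but this should follow the BKS20 template once the reduction in the previous paragraph is carried out cleanly.
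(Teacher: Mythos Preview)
Your overall architecture --- separate the direct-sum from the spanning, reduce to an essentially Euclidean lattice, then prove surjectivity of the differential --- matches the paper's. But the engine room of your spanning argument is missing the key mechanism, and two of your descriptions are off.

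First, ``essentially Euclidean'' in this paper is \emph{not} the maximal abelian cover; it is the condition (\cref{def: EE}) that some free finite-index subgroup admits a coordinate $(\La_F,\cS^+)$ for which the induced block graph on $\Z^d$ has graph distance equal to the standard Euclidean-lattice distance. The reduction (\cref{lem: existence}, \cref{lem: equivalent}) replaces $\cX$ by an equivalent locale with the same vertex set, not by a cover, and transports $C_\unif$ and $Z^1_\unif$ through the canonical isomorphisms for equivalent locales.

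Second, and more seriously, the paper does \emph{not} build primitives by solving local Poisson problems on growing boxes and passing to a limit. There is no sequence $f_n$ and no uniform-topology limit to control. The argument is: a closed form is automatically exact on $S^X_*$ (\cref{lem: exact}), so one has some $f\in C(S^X_*)$ with $\partial_\cX f=\omega$; the entire difficulty is showing that $f$ can be chosen \emph{uniform}. For this the paper associates to $f$ a pairing $h_f\colon\cM\times\cM\to\R$ (\cref{def: pairing}) measuring the failure of $\iota^{\La_1\cup\La_2}f$ to split additively over well-separated blocks, proves $h_f$ satisfies a $2$-cocycle identity (\cref{prop: cocycle}), and then invokes a splitting lemma (\cref{lem: splitting1}): if the cocycle is a coboundary $h_f(\alpha,\beta)=h(\alpha)+h(\beta)-h(\alpha+\beta)$, then $g\coloneqq f+h\circ\bsxi_X$ has $h_g\equiv0$, and a separate criterion (\cref{prop: criterion}) forces $g$ uniform. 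This is where the dichotomy $d=1$ versus $d\ge2$ actually enters: for $d\ge2$ one can permute the blocks $D_1,D_2$ through a third one (\cref{lem: symmetric}) and conclude $h_f$ is symmetric, which suffices for splitting; for $d=1$ symmetry can fail, and one needs $\cM\cong\N$ or $\Z$ --- exactly the content of the \emph{simple} hypothesis --- to split the cocycle. Your description of the $d=1$ obstruction as a lack of room for ``spatial averaging'' and of the remedy as ``integer-valued invariants instead of averaging'' does not match this mechanism.

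Third, the paper does \emph{not} produce a $G$-invariant primitive directly. It first proves unrestricted surjectivity $\partial_\cX\colon C_\unif(S^X)\twoheadrightarrow Z^1_\unif(S^X)$ (\cref{thm: 1}); then, for $\omega\in\cC$, picks any uniform $f$ with $\partial_\cX f=\omega$, observes $(1-\sigma_j)f\in\Ker\partial_\cX\cong\C S$ gives conserved quantities $\zeta^{(j)}$, and subtracts $\sum_j\frA^j_{\zeta^{(j)}}\in\cV$ to obtain a $G$-invariant $g$. The direct-sum $\cE\cap\partial_\cX\cV=0$ is then a two-line algebraic check using the same identification $\Ker\partial_\cX\cong\C S$, with no need for your period functionals.
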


\cref{thm: main} is a generalization of \cite{BKS20}*{Theorem 5,17} to the 
case of an arbitrary crystal lattice.  In a subsequent article \cite{BS22L2}, will use this result
to prove the decomposition of shift-invariant closed $L^2$-forms
on models over a general  crystal lattice, generalizing 
the result of \cite{BS21L2} for the case of the Euclidean lattice $\Z^d=(\Z^d,\bbE^d)$.
Such result will play a crucial role in the proof of the hydrodynamic limit
for large scale interacting systems on crystal lattices.
In particular, we
expect this result
would allow for the proof of the hydrodynamic limit for
translation invariant processes on $\Z^d$ with finite range, extending 
results which were up until now
known only for nearest neighbor interactions.

%
%
%
\section{Crystal Lattices}\label{sec: review}
%
%
%

In this section, we review some terminology concerning multi-graphs and crystal lattices.
We will then introduce the notion of uniform functions and forms
on a configuration space with transition structure.

%
\subsection{Multigraphs and Crystal Lattices}\label{subsec: graphs}
%

In this subsection, we introduce terminology concerning multi-graphs and crystal lattices.
Our main reference is \cite{Sun13}*{\S3.1}.
Since crystal lattices are in general multi-graphs instead of graphs,
we extend our previous works \cites{BKS20,BS21,BS21L2} to the case
when the underlying space is a multi-graph, which allows for multiple edges between 
two vertices.   
A \emph{graph} in \cite{Sun13}*{\S3.1} corresponds to a 
\emph{strictly symmetric multi-graph} in our terminology.
We will always assume that a multi-graph is symmetric.

\begin{definition}
	We define a \emph{symmetric directed multi-graph}, which we refer to simply as a \emph{multi-graph},
	to be the quintuple $\cX=(X,E_X,o,t,\iota)$, where $X$ is a 
	set which we call the \emph{set of vertices}, $E_X$ is a set which we call the \emph{set of  edges},
	$o$ and $t$ are maps $o\colon E_X\rightarrow X$ and $t\colon E_X\rightarrow X$ which 
	we call the \emph{origin} and \emph{target}, and $\iota\colon E_X\rightarrow E_X$, $e\mapsto\bar e$ is a bijection
	which we call the \emph{inversion} satisfying $o(\bar e)=t(e)$ 
	and $t(\bar e)=o(e)$.  
	We say that $\cX$ is \emph{strictly symmetric}, if $\bar e\neq e$ for any $e\in E_X$.
\end{definition}

We will often denote $E_X$ by $E$ for simplicity.
Suppose $\cX=(X,E,o,t,\iota)$ is a  multi-graph.  For any  edge
$e\in E$, we call $o(e)$ and $t(e)$ the origin and target of $e$.
In literature, the map $(o,t)\colon E\rightarrow X\times X$ is referred to as an \emph{incidence map}.
We will often denote $(X,E,o,t,\iota)$ simply as $(X,E)$.
We say that a multi-graph $\cX=(X,E)$
\emph{is locally finite} if for any $x\in X$, 
the set $E_x\coloneqq\{e\in E\mid o(e)=x\}$ is finite for any $x\in X$,
\emph{is finite} if its set of vertices $X$ and the set of edges $E$ are both finite,
and \emph{is infinite} if it is not finite.
If a multi-graph $(X,E)$ is locally finite, then it is finite if and only if the set of vertices $X$ is finite.

We define a \emph{path} on a multi-graph 
$\cX=(X,E)$ to be a finite sequence $\vec p\coloneqq(e^1,\ldots,e^N)$
of edges satisfying $t(e^i)=o(e^{i+1})$ for any $0<i<N$.  
We say that $\vec p$ is a path from $o(\vec p)\coloneqq o(e^1)$ to $t(\vec p)\coloneqq t(e^N)$ of 
length $\len(\vec p)\coloneqq N$.  
For any $x,x'\in X$, we let $P(x,x')$ the set of paths from $x$ to $x'$.
We define the \emph{distance} from $x$ to $x'$ by
\[
	d_{\cX}(x,x')\coloneqq\inf_{\vec p\in P(x,x')}\len(\vec p)
\]
if $P(x,x')\neq\emptyset$ and $d_{\cX}(x,x')\coloneqq\infty$ otherwise.
For any $Y\subset X$, we say that $Y$ is \emph{connected}, if for any $y,y'\in Y$,
there exists a path from $y$ to $y'$ consisting of edges whose origin and target are all elements of $Y$.
If $X$ is connected, then we say that the multi-graph $\cX=(X,E)$ is connected.

Any $e\in E$ such that $o(e)=t(e)$ is referred to as a \emph{loop}.
A multi-graph $\cX=(X,E)$ is a \emph{graph}, if the incidence
map $E\rightarrow X\times X$ is injective.
For the case of graphs, we will identify $E$ with its image in $X\times X$ so that $E\subset X\times X$,
and the maps $o$ and $t$ are simply the first and second projections.
Using this convention,
the inversion of a graph $\cX$ coincides with the bijection given by exchanging the components of $E\subset X\times X$, and any loop $e\in E$ satisfies $\bar e=e$.
A \emph{simple graph} is a graph without any loops.
Graphs are referred to as a combinatorial graph in \cite{Sun13}. 
We may associate to a multigraph $\cX=(X,E)$ a graph $\cX'=(X,E')$ by taking $E'\subset X\times X$
to be the image of the incidence map, which we refer to a \emph{graph associated to $\cX$}.
The \emph{simple graph $\cX''$ associated to $\cX$}
is the simple graph obtained by removing the loops from $\cX'$.

In this article, we allow for the locale underlying the large scale interacting system to be a multi-graph,
which may not necessarily be simple.  Hence we extend the definition of locales given in \cite{BKS20} to multi-graphs as follows.

\begin{definition}\label{def: locale}
	We let the \emph{locale} to be any connected locally finite multi-graph.
\end{definition}

Next, we define the notion of morphisms and automorphisms of multi-graphs.

\begin{definition}
	Let $\cX=(X,E_X)$ and $\cY=(Y,E_Y)$ be multi-graphs.
	A morphism of a multi-graph $u\colon\cX\rightarrow\cY$
	is a pair $(u,u_E)$ consisting of
	maps of sets $u\colon X\rightarrow Y$ and $u_E\colon E_X\rightarrow E_Y$ compatible
	with the incidence map and the inversion.  In other words, we have commutative diagrams
	\[
		\xymatrix{
			E_X \ar[r]^{u_E} \ar[d]_{(o,t)}&  E_Y\ar[d]_{(o,t)}  &    E_X\ar[r]^{u_E} \ar[d]_\iota  & E_Y\ar[d]_\iota\\
			X\times X\ar[r]^{u\times u}&Y\times Y               &     E_X\ar[r]^{u_E}   & E_Y,
		}
	\]
	where the vertical maps are the incidence and inversion maps.
	Note that if $\cX$ is a graph, then the map $u_E$ is determined uniquely by $u$.
\end{definition}

An \emph{automorphism} of a multi-graph $\cX=(X,E_X)$ is a morphism $(\sigma,\sigma_E)\colon\cX\rightarrow\cX$
such that the induced maps $\sigma\colon X\rightarrow X$ and $\sigma_E\colon E_X\rightarrow E_X$ are bijective.
We denote by $\Aut(\cX)$ the set of automorphisms of $\cX$.  The set 
$\Aut(\cX)$ has a structure of a group whose operation
is the composition of automorphisms.

\begin{definition}
	Let $\cX=(X,E_X)$ and $\cY=(Y,E_Y)$ be multi-graphs.
	We say that a morphism $u\colon\cX\rightarrow\cY$ of multi-graphs
	is a \emph{covering}, if $u$ and $u_E$ are both surjective, and
	for any $x\in X$, if we let $E_x\coloneqq \{ e\in E_X \mid o(e)=x\}$
	and  $E_{u(x)}\coloneqq \{ e\in E_Y \mid o(e)=u(x)\}$, then the map $E_x\rightarrow E_{u(x)}$
	induced by $u_E$ is bijective.
\end{definition}

If $u\colon\cX\rightarrow\cY$ is a covering, then $\cX$ is locally finite if and only if $\cY$
is locally finite.
For any multi-graph $\cX=(X,E)$, an action of a group $G$ on $\cX$
is defined to be a group homomorphism $G\rightarrow\Aut(\cX)$.
We define the \emph{quotient multi-graph $\cX/G$} of $\cX$ for the action of $G$ 
as follows.
Let $X/G$ and $E/G$ be the set of orbits which we refer also to
as a \emph{quotient} of $X$ and $E$ with respect to the action of $G$.
For any $e\in E$ and $\sigma\in G$, we have 
\[
	(o(\sigma_E(e)), t(\sigma_E(e)))= (\sigma(o(e)),\sigma(t(e)),
\]
hence the origin and target maps induces maps $o,t\colon E/G\rightarrow X/G$ on the quotient.
\begin{proposition}\label{prop: covering}
	The data $\cX/G=(X/G,E/G,o,t,\iota)$ is again a multi-graph, which we call the quotient of
	$\cX$ with respect to the action of $G$.    
	If the action of $G$ on $X$ is free, 
	in other words, if $\sigma(x)=x$ for some $x\in X$ implies that $\sigma=\id$,
	then the natural map $\cX\rightarrow\cX/G$
	is a covering.
\end{proposition}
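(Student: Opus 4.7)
The plan is to verify the claim in two stages: first that the quintuple $\cX/G = (X/G, E/G, o, t, \iota)$ really defines a multi-graph in the sense of the first definition, and then that under freeness the projection is a covering in the sense of the definition just before the proposition.

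For the first stage, I would begin with the incidence maps. The paper already records that for any $e \in E$ and $\sigma \in G$ one has $o(\sigma_E(e)) = \sigma(o(e))$ and $t(\sigma_E(e)) = \sigma(t(e))$, so the maps $o, t \colon E \rightarrow X$ are $G$-equivariant and descend to well-defined maps $o, t \colon E/G \rightarrow X/G$ on orbits. For the inversion, the commutative square in the definition of a morphism applied to each element of $G$ yields $\sigma_E(\bar e) = \overline{\sigma_E(e)}$, so $\iota$ is $G$-equivariant and descends to a map $\iota \colon E/G \rightarrow E/G$; since $\iota^2 = \id_E$, the descended map is its own inverse, hence a bijection. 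Finally, the identities $o(\bar e) = t(e)$ and $t(\bar e) = o(e)$ pass from $E$ to $E/G$ because both sides are computed on a chosen representative. This shows $\cX/G$ is a symmetric directed multi-graph.

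For the second stage, let $u \colon \cX \rightarrow \cX/G$ denote the natural projection, with $u$ and $u_E$ the orbit maps. Both are surjective by construction, and compatibility with incidence/inversion is exactly what was checked above, so $u$ is a morphism of multi-graphs. It remains to show that for every $x \in X$ the induced map $E_x \rightarrow E_{u(x)}$ is bijective. For surjectivity, any element of $E_{u(x)}$ is represented by some $e' \in E$ with $u(o(e')) = u(x)$, so $o(e') = \sigma(x)$ for some $\sigma \in G$; then $\sigma_E^{-1}(e') \in E_x$ maps to the same orbit as $e'$. For injectivity, suppose $e_1, e_2 \in E_x$ lie in the same orbit, so $e_2 = \sigma_E(e_1)$ for some $\sigma \in G$. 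Taking origins gives $x = o(e_2) = \sigma(o(e_1)) = \sigma(x)$. Here the freeness hypothesis enters decisively: $\sigma(x) = x$ forces $\sigma = \id$, hence $e_1 = e_2$.

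Honestly, no step should present a serious obstacle; the argument is a bookkeeping verification in which the only substantive input is the use of freeness to obtain injectivity on each star $E_x$. The point worth being careful about is simply the direction of the verification for $\iota$, where one must argue that descent plus $\iota^2 = \id$ gives a bijection (rather than trying to construct an inverse by hand), and to keep straight that equivariance of $o, t, \iota$ is not an extra hypothesis but is built into the definition of a morphism of multi-graphs.
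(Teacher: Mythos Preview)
Your proof is correct and follows essentially the same approach as the paper's own proof; in fact you give more detail on the first stage (where the paper simply says the multi-graph axioms ``follow immediately from the definition''), while your surjectivity and injectivity arguments for $E_x \rightarrow E_{u(x)}$ match the paper's line for line.
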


\begin{proof}
	The fact that $\cX/G$ is a multi-graph follows immediately from the definition.
	Suppose the action of $G$ on $\cX$ is free.
	Let $x\in X$ and denote by $x_0$ its image in $X/G$.
	Then we have a map
	\begin{equation}\label{eq: map}
		E_{x}=\{  e\in E \mid  o(e)=x\} \rightarrow E_{x_0}=\{  e_0\in E/G \mid  o(e_0)=x_0\}.
	\end{equation}
	If $e_0\in E_{x_0}$, then there exists a representative $e'\in E$ such that $o(e')=\sigma(x)$
	for some $\sigma\in G$.  If we let $e\coloneqq\sigma^{-1}_E(e')$, then $e\in E_x$ 
	and maps to $e_0$ in $E_{x_0}$.  This shows that \eqref{eq: map}
	is surjective.  Suppose now that we have $e,e'\in E$ such that $o(e)=o(e')=x$,
	and suppose $e$ and $e'$ coincides in $E/G$.  Then $e=\sigma_E(e')$ for some $\sigma\in G$.	
	Then compatibility of the action of $G$ with $o$ shows that $x=o(e)=o(\sigma_E(e'))=\sigma(o(e'))=\sigma(x)$.
	Since the action of $G$ on $X$ is free, we see that $\sigma=\id$, which shows that $e=e'$
	hence that \eqref{eq: map} is injective as desired.
\end{proof}

We see that any path in $\cX$ defines a path in $\cX/G$, hence if $\cX$ is connected, then $\cX/G$ is also connected.
For a covering $u\colon\cX\rightarrow\cY$, we say that an automorphism $\sigma$ of $\cX$ is a 
\emph{covering transformation} if $u\circ\sigma=u$, in other words, that we have
a commutative diagram
\[
	\xymatrix{\cX\ar[rr]^\sigma\ar[dr]_u&  &  \cX \ar[dl]^u\\  &  \cY&}.
\] 
 We denote by $\Aut(\cX/\cY)$ the group of covering transformations
of $\cX$ over $\cY$.  By \cite{Sun13}*{Theorem 5.2}, if we let $\cY=\cX/G$, then $G\cong\Aut(\cX/\cY)$.

We define a crystal lattice as follows.

\begin{definition}
	\begin{enumerate}
		\item We define a \emph{crystal} to be a strictly symmetric finite multi-graph.
		This coincides with what is referred to as a \emph{finite graph} in \cite{Sun13}.
		\item We say that a connected multi-graph $\cX=(X,E)$ is a \emph{crystal lattice}, if it has
		a free action by a finitely generated abelian group $G$ of rank $>0$, such that $\cX_0\coloneqq\cX/G$ is a 
		crystal.  
	\end{enumerate}
\end{definition}
	
Note that a crystal lattice is locally finite since $\cX\rightarrow\cX/G$ is a covering by \cref{prop: covering}.
Hence in particular, a crystal lattice is a locale in the sense of Definition \ref{def: locale}.
Since $\rank G>0$, a crystal lattice is an infinite milti-graph.
We refer to the crystal $\cX_0=X/G$ as the \emph{base graph} or the \emph{seed crystal} 
of the crystal lattice $\cX$.  Note that a seed crystal is connected since we have assumed that $\cX$
is connected.
We define the dimension of $\cX$ denoted $\dim\cX$ to be the rank of the abelian group $G$.

\begin{example}
	\begin{enumerate}
		\item The most typical example of a crystal
		lattice is the \emph{Euclidean lattice} $\cX=(\Z^d,\bbE^d)$ for any integer $d\geq1$,
		where $\Z^d$ is the $d$-fold product of $\Z$ and
		\[
			\bbE^d\coloneqq\{(x,y)\in\Z^d\times\Z^d\mid\abs{x-y}=1\},
		\]
		where $\abs{x}\coloneqq\sum_{j=1}^d\abs{x_j}$ for $x=(x_j)\in\Z^d$,
		with the action of $G=\Z^d$ given by translation.
		The quotient $\cX_0=\cX/G$ is the finite multi-graph $\cX_0=(X_0,E_0,o,t)$ 
		such that $X_0\coloneqq\{x_0\}$ 
	consists of a single vertex, $E_0\coloneqq\{e_j,\bar e_j\mid j=1,\ldots,d\}$, and $o,t\colon E_0\rightarrow X_0$
	are both constant maps with image $x_0$.
		\item Let $\cX_0$ be a connected crystal.  Then by \cite{Sun13}*{Theorem 6.6},
		the maximal abelian covering $\cX^\ab$ of $\cX_0$ is a connected 
		simple graph.
		The group of covering transformations $\Aut(\cX^\ab/\cX_0)$ of $\cX^\ab$ over $\cX_0$
		is isomorphic to the maximal abelian quotient $\pi_1^\ab(\cX_0)$
		of the fundamental group of $\cX_0$, which shows that $\cX/\pi_1^\ab(\cX_0)=\cX_0$,
		hence that $\cX$ is a crystal latttice
		for the action of $\pi_1^\ab(\cX_0)$.
		\item Let $\cX$ be a crystal lattice for the action by a finitely generated abelian group 
		$G$.
		If we let $\cX^\ab$ be the maximal abelian covering of the seed crystal
		$\cX_0\coloneqq\cX/G$,
		then we have $\cX=\cX^\ab/H$ for some subgroup $H\subset\pi_1^\ab(\cX_0)$,
		and we have $G\cong\pi_1^\ab(\cX_0)/H$.
	\end{enumerate}
	See \cref{example: more} for more examples of crystal lattices.
\end{example}

%
\subsection{Configuration Space and Uniform Functions}\label{subsec: CS}
%

In order to consider the large scale interacting system on a locale, we next consider the configuration
space with transition structure.

\begin{definition}
	We define an \emph{interaction} to be a pair $(S,\phi)$ consisting of a set $S$
	which we call the \emph{set of states} and a
	map
	$\phi\colon S\times S\rightarrow S\times S$ satisfying
	\begin{equation}\label{eq: interaction}
		\bar\phi\circ\phi(s_1,s_2)=(s_1,s_2)
	\end{equation}
	for any $(s_1,s_2)\in S\times S$ such that $\phi(s_1,s_2)\neq(s_1,s_2)$.
	Here, $\bar\phi\coloneqq\iota\circ\phi\circ\iota$ for the bijection 
	$\iota\colon S\times S\rightarrow S\times S$ obtained by exchanging the components of 
	$S\times S$.
\end{definition}

In what follows, we let $(S,\phi)$ be an interaction.
For any multi-graph $\cX=(X,E)$,
we define the \emph{configuration space} on $\cX$  by
\[
	S^X\coloneqq\prod_{x\in X}S.
\]
We call any $\e\in S^X$ a \emph{configuration}.
For any configuration $\e=(\e_x)\in  S^X$ and $e\in E$, we denote by $\e^e$
the element $\e^e=(\e^e_x)\in S^X$ such that 
$(\e^e_{o(e)},\e^e_{t(e)})\coloneqq\phi(\e_{o(e)}, \e_{t(e)})$ if $o(e)\neq t(e)$,
and $\e^e_x=\e_x$ if $x\neq o(e),t(e)$
or $x=o(e)=t(e)$.
Furthermore, we let
\[
	\Phi_X\coloneqq S^X \times E,
\]
and we define the maps $o,t\colon \Phi_X\rightarrow S^X\times S^X$ by
\begin{align}\label{eq: ot}
	o((\e,e))&=\e,&
	t((\e,e))&=\e^e
\end{align}
for any $(\e,e)\in\Phi_X$.  We have the following.

\begin{lemma}\label{ref: choice}
	For any $e\in E$ and $\e\in S^X$, if $\e^e\neq\e$, then $(\e^e)^{\bar e}=\e$.
\end{lemma}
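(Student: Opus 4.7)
The plan is to simply unwind the definitions and apply the defining relation of an interaction. First I would dispose of the loop case: if $o(e)=t(e)$, the definition of $\e^e$ forces $\e^e=\e$, contradicting the hypothesis $\e^e\neq\e$. Hence we may assume $o(e)\neq t(e)$, and since $o(\bar e)=t(e)$ and $t(\bar e)=o(e)$, the vertices involved in applying $\bar e$ are again distinct.

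Next I would introduce the abbreviations $(s_1,s_2)\coloneqq(\e_{o(e)},\e_{t(e)})$ and $(s'_1,s'_2)\coloneqq\phi(s_1,s_2)=(\e^e_{o(e)},\e^e_{t(e)})$, so that the hypothesis $\e^e\neq\e$ translates exactly into $(s'_1,s'_2)\neq(s_1,s_2)$. By definition of the transition associated to $\bar e$, the configuration $(\e^e)^{\bar e}$ agrees with $\e^e$, and hence with $\e$, at every vertex other than $o(e)$ and $t(e)$; the only thing to verify is the identity
\[
\bigl((\e^e)^{\bar e}_{t(e)},(\e^e)^{\bar e}_{o(e)}\bigr)=\phi\bigl(\e^e_{t(e)},\e^e_{o(e)}\bigr)=\phi(s'_2,s'_1)=(\e_{t(e)},\e_{o(e)})=(s_2,s_1).
\]

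Finally, I would invoke the defining property \eqref{eq: interaction} of an interaction. Since $\phi(s_1,s_2)=(s'_1,s'_2)\neq(s_1,s_2)$, we have $\bar\phi(s'_1,s'_2)=(s_1,s_2)$, i.e.\ $\iota\circ\phi\circ\iota(s'_1,s'_2)=(s_1,s_2)$. Applying $\iota$ to both sides yields $\phi(s'_2,s'_1)=\iota(s_1,s_2)=(s_2,s_1)$, which is exactly the required identity. This completes the verification. There is no serious obstacle here: the entire lemma is an exercise in tracking which vertex plays which role under the swap $e\leftrightarrow\bar e$, with the single nontrivial input being the involutivity condition \eqref{eq: interaction}.
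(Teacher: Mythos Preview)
Your proof is correct and follows essentially the same route as the paper's: both reduce to the identity $\bar\phi\circ\phi(s_1,s_2)=(s_1,s_2)$ from \eqref{eq: interaction}, with your version spelling out the loop case and the $\iota$-swaps more explicitly while the paper packages them into the single line $((\e^e)^{\bar e}_{o(e)},(\e^e)^{\bar e}_{t(e)})=\bar\phi\circ\phi(\e_{o(e)},\e_{t(e)})$.
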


\begin{proof}
	Let $e\in E$ and $\e=(\e_x)\in S^X$.
	Note that $(\e_{o(e)},\e_{t(e)})\in S\times S$.
	Suppose that $\e^e\neq\e$.
	Then $\phi(\e_{o(e)},\e_{t(e)})\neq(\e_{o(e)},\e_{t(e)})$.
	By definition of $(\e^e)^{\bar e}$, we see that
	\[
		((\e^e)^{\bar e}_{o(e)},(\e^e)^{\bar e}_{t(e)})=\bar\phi\circ\phi(\e_{o(e)},\e_{t(e)}).
	\]
	Our assertion now follows from \eqref{eq: interaction}.
\end{proof}

By \cref{ref: choice}, we have $(\e^e)^{\bar e}=\e$ if $\e^e\neq\e$
for any $(\eta,e)\in\Phi_X$.
We define the inversion $\iota\colon\Phi_X\rightarrow\Phi_X$ by
\begin{equation}\label{eq: iota}
	\iota((\e,e))\coloneqq
	\begin{cases}
		(\e,e)  & \e^e=\e\\
		(\e^e,\bar e)  & \e^e\neq\e
	\end{cases}
\end{equation}
for any $(\e,e)\in\Phi_X$.
Then by construction, if $\e^e=\e$, then $\iota\circ\iota(\e,e)=\iota(\e,e)=(\e,e)$
and if $\e^e\neq\e$, then $\iota\circ\iota(\e,e)=\iota(\e^e,\bar e)=((\e^e)^{\bar e},e)=(\e,e)$.
This shows that $\iota\colon\Phi_X\rightarrow\Phi_X$ is a bijection
such that if we denote $\bar\vp=\iota(\vp)$ for any $\vp\in\Phi_X$, then 
we have $o(\bar\vp)=t(\vp)$ and $t(\bar\vp)=o(\vp)$.
This shows that $(S^X,\Phi_X,o,t,\iota)$ is a multi-graph.

\begin{definition}\label{ref: symmetric}
	The multi-graph 
	$
		S^{\cX}_\phi\coloneqq(S^X,\Phi_X,o,t,\iota),
	$ 
	often denoted simply as $S^{\cX}_\phi=(S^X,\Phi_X)$,
	defines a multi-graph
	which we call the \emph{configuration space with transition structure} associated
	to the multi-graph $\cX$ and interaction $(S,\phi)$.
\end{definition}

\begin{remark}
	In our previous articles \cites{BKS20,BS21,BS21L2}, 
	we defined the configuration space with transition structure simply to be
	a graph and not a multi-graph. 
	We note that $S^{\cX}_\phi$ is in general not strictly symmetric.
\end{remark}

Let $\cX=(X,E)$ be a locally finite multi-graph, and let $(S^X,\Phi_X)$ be the configuration space
with transition structure associated to some interaction $(S,\phi)$.
For any $\La\subset X$, let $C(S^\La)$ be the $\R$-linear space of maps from $S^\La$ to $\R$.
For any $\La\subset\La'\subset X$,
the natural projection $S^{\La'}\rightarrow S^\La$ induces an injective $\R$-linear homomorphism
$C(S^\La)\hookrightarrow C(S^{\La'})$.  Denote by $\sI_X$ the set of finite subsets of $X$.
We define the space of \emph{local functions} $C_\loc(S^X)$ on $S^X$ by
\[
	C_\loc(S^X)\coloneqq\bigcup_{\La\in\sI_X}C(S^\La)\subset C(S^X).
\]
Next, we fix a point $*\in S$, which we call the \emph{base state}, and we let $S^X_*$ be the
configurations $\e=(\e_x)\in S^X$ such that $\e_x=*$ for all but finite $x\in X$.
For any $\La\in\sI_X$, the natural projection $S^X_*\rightarrow S^\La$ induces an injective
$\R$-linear homomorphism $C(S^\La)\hookrightarrow C(S^X_*)$, which induces an embedding
\[
	C_\loc(S^X)\hookrightarrow C(S^X_*),
\]
which is compatible with the projection $C(S^X)\rightarrow C(S^X_*)$.

\begin{proposition}[\cite{BKS20}*{Proposition 3.3}]\label{prop:BKS203.3}
	For any $\La\in\sI_X$, let 
	\[
		C_\La(S^X)\coloneqq \{f\in C(S^\La)\mid f(\e)=0  \text{ if $\e=(\e_x)$ satisfies $\eta_x=*$ for some $x\in \La$}\}.
	\]
	Then any $f\in C(S^X_*)$ has a unique expansion of the form
	\begin{equation}\label{eq: expansion}
		f=\sum_{\La\in\sI_X}f_\La,
	\end{equation}
	where $(f_\La)_{\La\in\sI_X}$ is a set of functions such that $f_\La\in C_\La(S^X)$ for any $\La\in\sI_X$.
\end{proposition}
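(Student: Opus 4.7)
The plan is to build the decomposition by an inclusion-exclusion (Möbius inversion) formula with respect to the support of a configuration, then check the three defining properties and uniqueness separately.

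First I would introduce the notation: for any $\eta = (\eta_x) \in S^X_*$ and any $\Lambda \in \mathcal{I}_X$, let $\eta^\Lambda \in S^X_*$ be the configuration that agrees with $\eta$ on $\Lambda$ and equals the base state $*$ outside $\Lambda$. Also let $\mathrm{Supp}(\eta) \coloneqq \{x \in X \mid \eta_x \neq *\}$, which is finite by the definition of $S^X_*$. Then, given $f \in C(S^X_*)$, I would define
\[
    f_\Lambda(\eta) \coloneqq \sum_{\Lambda' \subseteq \Lambda} (-1)^{|\Lambda| - |\Lambda'|}\, f(\eta^{\Lambda'}), \qquad \Lambda \in \mathcal{I}_X.
\]
The formula visibly depends only on $(\eta_x)_{x \in \Lambda}$, so $f_\Lambda \in C(S^\Lambda)$. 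To see that $f_\Lambda \in C_\Lambda(S^X)$, I would fix $\eta$ with $\eta_{x_0} = *$ for some $x_0 \in \Lambda$ and pair each $\Lambda'$ containing $x_0$ with $\Lambda' \setminus \{x_0\}$: since $\eta_{x_0} = *$, both contribute $f$ at the same argument with opposite signs, so $f_\Lambda(\eta) = 0$.

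Next I would verify the reconstruction $f = \sum_\Lambda f_\Lambda$. Evaluating at a fixed $\eta \in S^X_*$, the vanishing property just established forces $f_\Lambda(\eta) = 0$ whenever $\Lambda \not\subseteq \mathrm{Supp}(\eta)$, so the sum is actually finite. Swapping the order of summation gives
\[
    \sum_{\Lambda \in \mathcal{I}_X} f_\Lambda(\eta) = \sum_{\Lambda' \subseteq \mathrm{Supp}(\eta)} f(\eta^{\Lambda'}) \sum_{\Lambda' \subseteq \Lambda \subseteq \mathrm{Supp}(\eta)} (-1)^{|\Lambda| - |\Lambda'|},
\]
and the standard identity $\sum_{k=0}^{n} \binom{n}{k}(-1)^k = \delta_{n,0}$ collapses the inner sum to $1$ precisely when $\Lambda' = \mathrm{Supp}(\eta)$, leaving $f(\eta^{\mathrm{Supp}(\eta)}) = f(\eta)$.

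Finally, for uniqueness I would argue by induction on $|\Lambda_0|$ that any relation $\sum_\Lambda g_\Lambda = 0$ with $g_\Lambda \in C_\Lambda(S^X)$ forces every $g_\Lambda$ to vanish. Evaluating at the all-$*$ configuration kills every term except $g_\emptyset$, giving the base case. For the inductive step, fix $\Lambda_0$ and any $\eta$ with $\mathrm{Supp}(\eta) = \Lambda_0$; then $g_\Lambda(\eta) = 0$ for $\Lambda \not\subseteq \Lambda_0$ (defining property of $C_\Lambda$) and for $\Lambda \subsetneq \Lambda_0$ (induction), so $g_{\Lambda_0}(\eta) = 0$. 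Since $g_{\Lambda_0} \in C_{\Lambda_0}(S^X)$ already vanishes when any coordinate in $\Lambda_0$ equals $*$, and we have just shown it vanishes on the complementary set, we conclude $g_{\Lambda_0} = 0$. I expect the main (very mild) obstacle to be getting the Möbius inversion bookkeeping clean enough that the cancellation in the reconstruction step is transparent; everything else is formal.
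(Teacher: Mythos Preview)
Your argument is correct. The paper does not actually supply a proof of this proposition; it simply imports the statement from \cite{BKS20}*{Proposition 3.3}, so there is no in-paper argument to compare against. That said, your M\"obius-inversion construction is exactly the standard one, and it is consistent with how the paper subsequently \emph{uses} the expansion: the identity $\iota^\Lambda f(\eta)=\sum_{\Lambda''\subset\Lambda}f_{\Lambda''}(\eta)=f(\eta|_\Lambda)$ quoted just after the proposition is precisely the inverse of your defining formula, so your $f_\Lambda$ agrees with the paper's. The existence, membership in $C_\Lambda(S^X)$, reconstruction, and uniqueness steps are all clean; nothing is missing.
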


Suppose we are given a set of functions $(f_\La)_{\La\in\sI_X}$
such that $f_\La\in C_\La(S^X)$ for any $\La\in\sI_X$.
Then for any $\e\in S^X_*$, we have $f_\La(\e)\neq 0$
for all but finite $\La\in\sI_X$.  This shows that the sum $\sum_{\La\in\sI_X}f_\La(\e)$ is well defined
for any $\e\in S^X_*$, hence defines a function on $S^X_*$ (see \cite{BKS20}*{Lemma 3.2} for details).
For any $R\geq 0$, we let $\sI_{\cX,R}$ be the subset of $\sI_X$ consisting of $\La\in\sI_X$ such that $\diam(\La)\leq R$, where $\diam(\La)\coloneqq\sup_{x,x'\in\La}d_{\cX}(x,x')$.
Since we have assumed that $\cX$ is locally finite, any $\La\subset X$ with finite diameter is a finite set.

In our previous article \cites{BKS20,BS21,BS21L2}, we assumed
that the underlying locale is a simple graph.
Multi-edges in our current usage do not typically have a big effect, and various assertions 
for the associated simple graph may naturally be extended to the case of multi-graphs.
For example, the distance $d_{\cX}$ on $X$ coincides for a multi-graph and its associated simple graph.

\begin{definition}\label{def: UL}
	We say that a function $f\in C(S^X_*)$ is a \emph{uniform function on $S^X$}, if there exists $R\geq 0$ such that
	\[
		f=\sum_{\La\in\sI_{\cX,R}}f_\La
	\]
	for some set of functions such that $f_\La\in C_\La(S^X)$ for any $\La\in\sI_{\cX,R}$.
	We denote by $C_\unif(S^X)$ the $\R$-linear space of uniform functions on $S^X$.
\end{definition}
The notion of uniformity a priori depends on the choice of $E$ for the multigraph $\cX=(X,E)$.
We will later show that the notion is independent of the choice of $E$ for crystal lattices.
The most basic example of uniform functions is given by the conserved quantites.

\begin{definition}
	We say that a map $\xi\colon S\rightarrow\R$
	is a \emph{conserved quantity} for the pair $(S,\phi)$, if $\xi(*)=0$ for the base state $*\in S$,
	and
	\[
		\xi(s_1)+\xi(s_2)=\xi(s'_1)+\xi(s'_2),
	\]
	where $(s_1',s_2')=\phi(s_1,s_2)$ for any $(s_1,s_2)\in S\times S$.
\end{definition}

Let $\xi\in\C{S}$.  For any $x\in X$, 
let $\xi_x$ be the function in $C(S^{\{x\}})\subset C(S^X)$ such that
$\xi_x(\e)=\xi(\e_x)$ for any $\e=(\e_x)\in S^X$.
If we let $\xi_X\coloneqq\sum_{x\in X}\xi_x\in C(S^X)$,
then $\xi_X$ is a uniform function in $C_\unif(S^X)$.
The correspondence $\xi\mapsto\xi_X$ gives an injective
$\R$-linear homomorphism $\C{S}\hookrightarrow C_\unif(S^X)$.

We use the conserved quantities to define the notion that an interaction is irreducibly quantified.
Let $(\La,E_\La)$ be a finite locale.
In this case, we have $\xi_\La=\sum_{x\in\La}\xi_x\in C(S^\La)$.
Consider the canonical map
\begin{equation}\label{eq: bsxila}
	\bsxi_{\!\La}\colon S^\La\rightarrow\Hom_\R(\C{S},\R), \qquad
	\e\mapsto (\xi \mapsto\xi_\La(\e)).
\end{equation}
Note that a choice of basis of $\C{S}$ gives an isomorphism $\Hom_\R(\C{S},\R)\cong\R^{c_\phi}$.
For any $\e,\e'\in S^\La$, we have $\bsxi_{\!\La}(\e)=\bsxi_{\!\La}(\e ')$ if and only if $\xi_\La(\e)=\xi_\La(\e')$
for any conserved quantity $\xi\in\C{S}$.

\begin{definition}
	We say that an interaction $(S,\phi)$ is \emph{irreducibly quantified}, if for any finite locale 
	$(\La,E_\La)$ and $\e,\e'\in S^\La$ such that $\bsxi_{\!\La}(\e)=\bsxi_{\!\La}(\e')$,
	there exists a path in $S^\La$ from $\e$ to $\e'$.
\end{definition}

Irreducibly quantified is an important assumption for our main results.

%
\subsection{Forms and Uniform Forms}\label{subsec: forms}
%

In this subsection, we review the notion of forms and uniform forms.
We keep the notation of \S\ref{subsec: CS}.
Let 
\[
	\Phi_X^*\coloneqq\{\vp\in\Phi_X  \mid o(\vp),t(\vp)\in S^X_*\},
\]
so that $(S^X_*,\Phi^*_X,o,t,\iota)$ for maps $o,t,\iota$ as in \eqref{eq: ot} and \eqref{eq: iota} is 
again a multi-graph.
We let
\[
	C^1(S^X_*)\coloneqq\Map^\alt(\Phi_X^*,\R)
\]	
be the $\R$-linear space of maps $\omega^*\colon\Phi_X\rightarrow\R$, satisfying the condition
that $\omega(\bar\vp)=-\omega(\vp)$ for any $\vp\in\Phi_X^*$.
We call such $\omega$ a \emph{form} on $S^X_*$.
We have a differential
\[
	\partial_{\cX}\colon C(S^X_*)\rightarrow C^1(S^X_*)
\]
defined by $\partial_{\cX} f(\vp)\coloneqq f(\e^e)-f(\e)$ for any $\vp=(\e,e)\in\Phi^*_X$.

For any $e\in E$, a form $\omega\in C^1(S^X_*)$ defines a map $\omega_e\colon S^X_*\rightarrow\R$
by $\omega_e(\e)\coloneqq\omega((\e,e))$, which gives an inclusion
\begin{equation}\label{eq: inclusion}
	C^1(S^X_*)\hookrightarrow\prod_{e\in E}C(S^X_*), \qquad\omega\mapsto(\omega_e)_{e\in E}.
\end{equation}
A system $(\omega_e)_{e\in E}$ is a form if and only if
\begin{equation}\label{eq: 1}
	\omega_e(\e)=
	\begin{cases} 
	-\omega_{\bar e}(\e^e)& \e^e\neq\e,\\
		0&\e^e=\e.	
	\end{cases}
\end{equation}
If $f\in C(S^X_*)$, then $\partial_{\cX} f=(\nabe f)_{e\in E}$, where $\nabe f$ is the function in $C(S^X_*)$
given by $\nabe f(\e)\coloneqq f(\e^e)-f(\e)$ for any $\e\in S^X_*$.

For any path $\vec\gamma=(\vp_1,\ldots,\vp_N)$ in $S^X_*$, we define the integral of 
$\omega\in C^1(S^X_*)$
with respect to $\vec\gamma$ by
\[
	\int_{\vec\gamma}\omega\coloneqq\sum_{i=1}^N \omega(\vp_i).
\]
We say that a form $\omega\in C^1(S^X_*)$ is \emph{closed}, if
$\int_{\vec\gamma}\omega=0$ for any closed path $\vec\gamma\in S^X_*$.
We denote by $Z^1(S^X_*)$ the space of closed forms.
As in \cite{BKS20}*{Lemma 2.15 and 2.16}, we may prove the following.

\begin{lemma}\label{lem: exact}
	A form $\omega\in C^1(S^X_*)$ is closed if and only if it is exact, in other words,
	if there exists $f\in C(S^X_*)$ such that $\partial_{\cX} f=\omega$.
	In particular, the differential $\partial_{\cX}$ gives an isomorphism 
	$C(S^X_*)/\Ker\partial_{\cX}\cong Z^1(S^X_*)$.
\end{lemma}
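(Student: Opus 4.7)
My plan is to adapt the standard argument that closed $1$-forms on a connected graph are exact, applying it component by component to $S^X_*$. The easy direction, that exact forms are closed, is immediate: if $\omega = \partial_{\cX} f$, then for any path $\vec\gamma = (\vp_1, \ldots, \vp_N)$ the integral telescopes to $f(t(\vec\gamma)) - f(o(\vec\gamma))$, which vanishes on closed paths.

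For the converse, I would first observe that $S^X_*$ is stable under transitions: if $\e \in S^X_*$, then $\e^e$ differs from $\e$ only at $o(e)$ and $t(e)$, so $\e^e \in S^X_*$ as well, confirming that $(S^X_*, \Phi^*_X, o, t, \iota)$ is a bona fide multi-graph. Partition $S^X_*$ into its connected components with respect to $\Phi^*_X$, and in each component $\sC$ fix a base configuration $\e_\sC \in \sC$ (a natural choice, when applicable, is the configuration identically equal to $*$). Given $\omega \in Z^1(S^X_*)$, define $f \in C(S^X_*)$ by $f(\e_\sC) := 0$ and, for $\e$ in the same component $\sC$, $f(\e) := \int_{\vec\gamma_\e} \omega$ for any path $\vec\gamma_\e$ from $\e_\sC$ to $\e$ in $\sC$. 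This is well-defined because for two such paths $\vec\gamma, \vec\gamma'$, the concatenation of $\vec\gamma$ with the reversal of $\vec\gamma'$ is a closed path, so by closedness of $\omega$ the two integrals agree.

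To conclude, I would verify $\partial_{\cX} f = \omega$ pointwise: for $\vp = (\e, e) \in \Phi^*_X$ with $\e^e \neq \e$, extending a chosen path from $\e_\sC$ to $\e$ by the edge $\vp$ produces a valid path to $\e^e$, so $f(\e^e) - f(\e) = \omega(\vp)$ by construction; when $\e^e = \e$, both sides vanish by \eqref{eq: 1}. The isomorphism $C(S^X_*)/\Ker \partial_{\cX} \cong Z^1(S^X_*)$ is then automatic: $\partial_{\cX}$ factors through the quotient and lands in $Z^1(S^X_*)$ by the easy direction, the construction above shows the induced map is surjective, and injectivity is tautological. The only delicate step is the well-definedness of $f$, which is precisely where closedness of $\omega$ is essential; everything else is bookkeeping, exactly as in \cite{BKS20}*{Lemma 2.15 and 2.16}.
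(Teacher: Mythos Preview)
Your proposal is correct and is precisely the standard argument the paper has in mind: the paper itself gives no proof but simply defers to \cite{BKS20}*{Lemma 2.15 and 2.16}, which carry out exactly the construction you describe (integration from a chosen basepoint in each connected component, well-definedness from closedness, and the telescoping check that $\partial_{\cX} f = \omega$). Your handling of the loop case $\e^e = \e$ via \eqref{eq: 1} and of the multi-graph inversion in the path-reversal step is the only place requiring care beyond the simple-graph version, and you have addressed both correctly.
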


For any $R\geq0$, we define the space of $R$-uniform forms by
\[
	C^1_R(S^X)\coloneqq C^1(S^X_*)\cap\prod_{e\in E}C\bigl(S^{B(o(e),R)}\bigr),
\]
where $B(o(e),R)\coloneqq\{ x\in X\mid d_{\cX}(o(e),x)\leq R\}$ is the ball in $X$ with center $o(e)$ and radius $R$.
Note that $B(o(e),R)$ is finite since we have assumed that the multi-graph $\cX$ is locally finite.
We define the $\R$-linear space of uniform forms on $S^X$ by
\[
	C^1_\unif(S^X)\coloneqq\bigcup_{R\geq 0}C^1_R(S^X),
\]
and the $\R$-linear space of closed uniform forms on $S^X$ by
\[
	Z^1_\unif(S^X)\coloneqq C^1_\unif(S^X)\cap Z^1(S^X).
\]
The differential induces an $\R$-linear map
\begin{equation}\label{eq: differential}
	\partial_{\cX}\colon C_\unif(S^X)\rightarrow Z^1_\unif(S^X)
\end{equation}
(see \cite{BKS20}*{Lemma 5.1}).
The key in the proof of our main result is the investigation of the kernel and the cokernel
of this differential.  The kernel may be expressed via the space of conserved quantities
as stated in \cref{thm: old} below,
when the interaction is irreducibly quantified.

Let $\star\in S^X_*$ be the configuration whose components are all at base state,
and we let $C^0_\unif(S^X_*)\coloneqq\{f\in C_\unif(S^X)\mid f(\star)=0\}$.
By construction, we have $\xi_X\in C^0_\unif(S^X)$ for any $\xi\in\C{S}$.
Since a conserved quantity is preserved with respect to interactions, for any $\e\in S^X_*$,
we have $\xi_X(\e^e)=\xi_X(\e)$ for any $e\in E$.  This shows that we have 
$\partial_{\cX}\xi_X=(\nabe\xi_X)_{e\in E}=0$ in $Z^1_\unif(S^X)$, hence we have $\xi_X\in\Ker\partial_{\cX}$.
One of the main results of \cite{BKS20} is the following, which will be used 
in the proof of our main theorem.

\begin{theorem}\label{thm: old}
	Let $\cX=(X,E)$ be a \emph{locale}, in other words, a locally finite multi-graph
	which is connected.
	Suppose in addition that the interaction $(S,\phi)$ is irreducibly quantified.
	Then the homomorphism $\C{S}\hookrightarrow C^0_\unif(S^X)$ induces an isomorphism
	\[
		\C{S}\cong \Ker\partial_{\cX}\subset C^0_\unif(S^X)
	\]
	for the differential $\partial_{\cX}\colon C^0_\unif(S^X)\rightarrow Z^1_\unif(S^X)$.
\end{theorem}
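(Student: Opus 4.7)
The plan consists of three ingredients: injectivity of $\C{S}\hookrightarrow\Ker\partial_{\cX}$, containment $\xi_X\in\Ker\partial_{\cX}$ for every $\xi\in\C{S}$, and surjectivity. The first two have already been noted in the excerpt. For $x\in X$ and $s\in S$, write $\eta^{(x,s)}\in S^X_*$ for the configuration taking value $s$ at $x$ and $*$ elsewhere. Injectivity is immediate: if $\xi_X=0$, then $\xi(s)=\xi_X(\eta^{(x,s)})=0$ for every $s\in S$, hence $\xi=0$. The containment $\xi_X\in\Ker\partial_{\cX}$ follows because conserved quantities are preserved by every transition, so $\nabe\xi_X=0$ for every $e\in E$.

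The substance is surjectivity. Fix $f\in C^0_\unif(S^X)$ with $\partial_{\cX}f=0$, and choose $R\geq 0$ together with an expansion $f=\sum_{\Lambda\in\sI_{\cX,R}}f_\Lambda$ with $f_\Lambda\in C_\Lambda(S^X)$. My first step is to show that $f(\eta)$ depends only on $\bsxi_X(\eta)\in\Hom_\R(\C{S},\R)$. Given $\eta,\eta'\in S^X_*$ with $\bsxi_X(\eta)=\bsxi_X(\eta')$, I pick a finite $\Lambda\subset X$ containing both supports; since $\xi(*)=0$ for every conserved quantity, we still have $\bsxi_\Lambda(\eta|_\Lambda)=\bsxi_\Lambda(\eta'|_\Lambda)$, and the irreducibly quantified hypothesis yields a path in $S^\Lambda$ from $\eta|_\Lambda$ to $\eta'|_\Lambda$ along edges in $E_\Lambda\subset E$. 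This path lifts to one in $S^X_*$, and since $\partial_{\cX}f=0$ means $\nabe f=0$ for every $e\in E$, the function $f$ is constant along paths, so $f(\eta)=f(\eta')$. I then define $\xi\colon S\to\R$ by $\xi(s)\coloneqq f(\eta^{(x,s)})$, independent of $x$ by what was just proved, with $\xi(*)=f(\star)=0$.

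The main obstacle is upgrading the factorization $f(\eta)=F(\bsxi_X(\eta))$ to the precise linear formula $f=\xi_X$, and this is where the uniformity of $f$ becomes indispensable. A direct inspection of the expansion $f=\sum_\Lambda f_\Lambda$ yields the following additivity property: if $\eta_1,\eta_2\in S^X_*$ have supports $K_1,K_2$ with $d_{\cX}(K_1,K_2)>R$, then the configuration $\eta_1\sqcup\eta_2$ taking value $\eta_i$ on $K_i$ and $*$ elsewhere satisfies
\[
	f(\eta_1\sqcup\eta_2)=f(\eta_1)+f(\eta_2),
\]
because any $\Lambda\in\sI_{\cX,R}$ with nonzero contribution must lie entirely within $K_1$ or within $K_2$. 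Since $\cX$ is infinite, connected, and locally finite (as is automatic for every crystal lattice), it contains arbitrarily many pairwise $R$-separated vertices. For any $\eta\in S^X_*$ with support $\{x_1,\dots,x_n\}$, I pick such sites $y_1,\dots,y_n$ and form the spread-out configuration $\eta'$ assigning $\eta_{x_i}$ to $y_i$; then $\bsxi_X(\eta)=\bsxi_X(\eta')$, so the first step gives $f(\eta)=f(\eta')$, while iterated additivity yields $f(\eta')=\sum_i f(\eta^{(y_i,\eta_{x_i})})=\sum_i\xi(\eta_{x_i})=\xi_X(\eta)$. The identity $f=\xi_X$ combined with $\partial_{\cX}f=0$ applied to configurations placing $s_1,s_2$ at the endpoints of any edge yields $\xi(s_1)+\xi(s_2)=\xi(s_1')+\xi(s_2')$ whenever $\phi(s_1,s_2)=(s_1',s_2')$, confirming $\xi\in\C{S}$ and completing the proof.
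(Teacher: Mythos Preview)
The paper does not supply its own proof of this theorem: it is quoted as ``one of the main results of \cite{BKS20}'' and used as a black box thereafter. So there is no in-paper argument to compare against.

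Your argument is sound and is in fact the natural one. Two small points deserve tightening. First, when you pick ``a finite $\Lambda\subset X$ containing both supports'' in order to invoke irreducible quantification, you must choose $\Lambda$ \emph{connected}, since the hypothesis is phrased for finite locales $(\Lambda,E_\Lambda)$; this is harmless because any finite subset of a connected locally finite graph sits inside a finite connected one. Second, your spreading argument genuinely requires $X$ to be infinite (so that arbitrarily many pairwise $R$-separated vertices exist). You flag this with the parenthetical ``as is automatic for every crystal lattice,'' which is exactly the context in which the theorem is applied in the paper; but note that the statement as written in the paper allows finite locales, for which the conclusion is actually false (e.g.\ for exclusion on a finite connected graph with $n$ vertices, $\Ker\partial_{\cX}$ has dimension $n$ while $\C{S}$ has dimension $1$). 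So the infiniteness assumption is not merely a convenience of your proof but an unstated hypothesis of the theorem itself.
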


For the rest of this subsection, we prepare some lemmas concerning functions whose differentials are
uniform.   Let $\cX$ be a locally finite multi-graph, and consider an interaction $(S,\phi)$ 
which is irreducibly quantified. 
For any $f\in C(S^X_*)$ with expansion $f=\sum_{\La''\in\sI_X}f_{\La''}$,
we let
\[
	\iota^\La f\coloneqq\sum_{\La''\subset\La}f_{\La''}
\]
for $\La\in\sI_X$, which gives a function in $C(S^\La)$.
If $\e\in S^X_*$, we have
\[
	\iota^\La f(\e)=f(\e|_\La)
\]
for any $\La\in\sI_X$, where $\e|_\La\in S^X_*$ is the configuration whose component 
coincides with that of $\eta$ for $x\in\La$ and is at base state for $x\not\in\La$.

For any $\La\subset X$ and $R\geq0$, we let $B(\La,R)\coloneqq\bigcup_{x\in\La}B(x,R)$.

\begin{lemma}\label{lem: dependence}
	Let $f\in C(S^X_*)$ and suppose $\partial_{\cX}f\in C^1_R(S^X)$ for some $R>0$.
	Let $Y\subset X$ be a connected subset.
	Then for any $W\subset X$ containing $B(Y,R)$, we see that the value
	\[
		(f-\iota^{W} f)(\e)
	\]
	for $\e\in S^X_*$ depends only on $\bsxi_Y(\e)$ and $\e|_{X\setminus Y}$.
\end{lemma}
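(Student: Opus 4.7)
The plan is to prove the stated dependence by an invariance argument: given two configurations $\e,\e'\in S^X_*$ with $\bsxi_Y(\e)=\bsxi_Y(\e')$ and $\e|_{X\setminus Y}=\e'|_{X\setminus Y}$, I would show directly that $(f-\iota^W f)(\e)=(f-\iota^W f)(\e')$, which after unwinding means
\[
	f(\e)-f(\e')=f(\e|_W)-f(\e'|_W).
\]
Both $\e,\e'$ differ only on a finite subset of $Y$ (since they lie in $S^X_*$ and agree outside $Y$). Let $Y_0\subset Y$ be a finite connected subset of $Y$ containing the support of $\e-\star$ and $\e'-\star$ restricted to $Y$; such a $Y_0$ exists because $Y$ is connected.

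The first key step is to connect $\e$ to $\e'$ by transitions supported inside $Y$. Since the interaction is irreducibly quantified and $\bsxi_{Y_0}(\e)=\bsxi_{Y_0}(\e')$ (the sums defining the two sides agree because $\e$ and $\e'$ coincide on $Y\setminus Y_0$ with $Y_0$ absorbing all non-base-state contributions), applying the irreducibly quantified property to the finite locale $(Y_0,E_{Y_0})$ gives a path $\e=\e^0,\e^1,\ldots,\e^N=\e'$ in $S^{Y_0}$, where $\e^i=(\e^{i-1})^{e_i}$ for edges $e_i$ whose origin and target both lie in $Y_0\subset Y$.

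Second, I telescope using the differential:
\[
	f(\e')-f(\e)=\sum_{i=1}^{N}\nabla_{\!e_i}f(\e^{i-1}).
\]
Since $\partial_{\cX}f\in C^1_R(S^X)$, each $\nabla_{\!e_i}f$ depends only on states in $B(o(e_i),R)\subset B(Y,R)\subset W$. Meanwhile, because each $e_i$ has both endpoints in $Y\subset W$, the transition commutes with restriction to $W$: $(\e^i)|_W=(\e^{i-1}|_W)^{e_i}$. Thus the same sequence of edges applied to $\e|_W$ yields the path $\e|_W=\e^0|_W,\e^1|_W,\ldots,\e^N|_W=\e'|_W$, and evaluating the locally supported $\nabla_{\!e_i}f$ on $\e^{i-1}$ or on $\e^{i-1}|_W$ gives the same value. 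Summing,
\[
	f(\e'|_W)-f(\e|_W)=\sum_{i=1}^{N}\nabla_{\!e_i}f(\e^{i-1}|_W)=\sum_{i=1}^{N}\nabla_{\!e_i}f(\e^{i-1})=f(\e')-f(\e),
\]
which is exactly the required equality. Recalling $\iota^W f(\e)=f(\e|_W)$ finishes the lemma.

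The only genuinely delicate point is the first step: arranging that the connecting path can be taken to consist solely of edges whose endpoints all lie in $Y$, so that restricting to $W\supset B(Y,R)$ does not destroy either the transitions or the $R$-local data of $\partial_{\cX}f$. This is where connectedness of $Y$, together with the irreducibly quantified hypothesis applied on a finite connected subset $Y_0$ capturing all non-base-state content, is essential.
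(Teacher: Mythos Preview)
Your proof is correct and follows essentially the same approach as the paper: reduce to showing $f(\e')-f(\e)=f(\e'|_W)-f(\e|_W)$, connect $\e$ to $\e'$ by a path using only edges inside $Y$ via the irreducibly quantified hypothesis, and then telescope using that each $\nabla_{\!e_i}f$ depends only on $B(o(e_i),R)\subset W$. If anything, your version is slightly more careful than the paper's in explicitly passing to a finite connected $Y_0\subset Y$ before invoking the irreducibly quantified condition, which is only stated for finite locales.
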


\begin{proof}
	Consider $\e',\e''\in S^X_*$ which coincide outside $Y$
	and satisfies $\bsxi_Y(\e')=\bsxi_Y(\e'')$.
	Since the interaction is irreducibly quantified,
	there exists a path 
	$\vec\gamma|_{S^Y}=(\vp_1|_{S^Y},\ldots,\vp_N|_{S^Y})$ from $\e'|_{S^Y}$ to $\e''|_{S^Y}$
	in $S^Y$.  We let $e_1,\ldots,e_N\in E_Y$ be
	such that $\vp_i|_{S^Y}=(\tilde\e_i,e_i)$, where $\tilde\e_1=\eta'|_{S^Y}$
	and $\tilde\eta_{i+1}=\tilde\eta_i^{e_i}$ for $i=1,\ldots,N-1$.
	If we let $\vp_i\coloneqq(\e_i,e_i)$, where $\e_1=\eta'$
	and $\eta_{i+1}=\eta_i^{e_i}$ for $i=1,\ldots,N-1$, then $\vec\gamma\coloneqq(\vp_1,\ldots,\vp_N)$
	gives a path from $\e'$ to $\e''$.
	Then we have
	\begin{align*}
		f(\e'')-f(\e')&=\sum_{i=1}^N\nabla_{e_i}f(\e_i), &
		f(\e''|_{W})-  f(\e'|_{W})&
		=\sum_{i=1}^N\nabla_{e_i}f(\e_i|_{W}).
	\end{align*}
	By our assumption, we have 
	$\nabe f\in C\bigl(S^{B(o(e),R)}\bigr)\subset C(S^W_*)$ for any $e\in E_Y$.
	This shows that
	\[
		\nabe f(\e|_{W})=\nabe f(\e)
	\]
	for any $e\in E_Y$ and $\e\in S^X_*$, hence
	$
		f(\e'')-f(\e')=f(\e''|_{W})- f(\e'|_{W}),
	$
	which proves that
	\[
		f(\e'')-f(\e''|_{W})=f(\e')- f(\e'|_{W})
	\]
	as desired.
\end{proof}

For any $\La\in\sI_X$,
let $\bsxi_{\!\La}\colon S^\La\rightarrow\Hom_\R(\C{S},\R)$ be the canonical map 
given in \eqref{eq: bsxila}.  The image of $S^\La$ by $\bsxi$ depends only on the order of
$\abs{\La}$, so we define $\cM_{\abs{\La}}\coloneqq\bsxi_{\!\La}(S^\La)$ for any $\La\in\sI_X$.
\begin{proposition}\label{prop: pairing}
	Let $f\in C(S^X_*)$ such that $\partial_{\cX}f\in C^1_R(S^X)$ for some $R>0$.
	Then for any finite connected $\La_1,\La_2\subset X$ such that $d_{\cX}(\La_1,\La_2)>R$,
	there exists a function $h^{\La_1,\La_2}_f\colon\cM_{\abs{\La_1}}\times\cM_{\abs{\La_2}}\rightarrow\R$ 
	such that
	\begin{equation}\label{eq: pair}
		\iota^{\La_1\cup \La_2}f(\e)-\iota^{\La_1} f(\e)-\iota^{\La_2}f(\e)=h^{\La_1,\La_2}_f(\bsxi_{\!\La_1}(\e),\bsxi_{\!\La_2}(\e))
	\end{equation}
	for all $\e\in S^X_*$.
\end{proposition}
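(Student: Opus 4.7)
The plan is to show that $H(\e) \coloneqq \iota^{\La_1\cup\La_2}f(\e) - \iota^{\La_1}f(\e) - \iota^{\La_2}f(\e)$ factors through the map $\e \mapsto (\bsxi_{\!\La_1}(\e), \bsxi_{\!\La_2}(\e))$. Since each $\iota^\La f(\e) = f(\e|_\La)$ only sees $\e$ on $\La$, the quantity $H(\e)$ depends on $\e$ only through $\e|_{\La_1 \cup \La_2}$, which (since $d_{\cX}(\La_1,\La_2) > R > 0$ forces $\La_1 \cap \La_2 = \emptyset$) is equivalent to the pair $(\e|_{\La_1}, \e|_{\La_2})$. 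It therefore suffices to establish the following claim: if $\e, \e' \in S^X_*$ are supported in $\La_1 \cup \La_2$ with $\e|_{\La_1} = \e'|_{\La_1}$ and $\bsxi_{\!\La_2}(\e) = \bsxi_{\!\La_2}(\e')$, then $H(\e) = H(\e')$; combined with the symmetric statement swapping $\La_1$ and $\La_2$, this yields the full invariance and hence the factorization.

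For this claim I invoke the assumption that the interaction is irreducibly quantified, applied to the finite locale $(\La_2, E_{\La_2})$ (which is connected by the hypothesis on $\La_2$): it produces a path $\vec\gamma = (\vp_1, \ldots, \vp_N)$ in $S^{\La_2}$ from $\e|_{\La_2}$ to $\e'|_{\La_2}$, with $\vp_i = (\bar\e_i, e_i)$ and $e_i \in E_{\La_2}$. I lift $\vec\gamma$ to two parallel paths in $S^X_*$: one path $(\tilde\e_i, e_i)$ whose configurations keep the $\La_1$-coordinates equal to $\e|_{\La_1}$ and are at base state outside $\La_1 \cup \La_2$, running from $\e$ to $\e'$; and one path $(\hat\e_i, e_i)$ with all coordinates outside $\La_2$ at base state, running from $\e|_{\La_2}$ to $\e'|_{\La_2}$. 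Both lifts are legitimate because each $e_i \in E_{\La_2}$ only modifies coordinates at $o(e_i), t(e_i) \in \La_2$. Telescoping along the two paths yields
\[
	f(\e') - f(\e) = \sum_{i=1}^N \nabla_{e_i} f(\tilde\e_i), \qquad f(\e'|_{\La_2}) - f(\e|_{\La_2}) = \sum_{i=1}^N \nabla_{e_i} f(\hat\e_i),
\]
where $\tilde\e_i$ and $\hat\e_i$ agree on $\La_2$ and on $X\setminus(\La_1\cup\La_2)$ and differ only on $\La_1$.

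The crux is to show these two sums agree termwise. Since $\partial_{\cX}f \in C^1_R(S^X)$, each $\nabla_{e_i} f$ lies in $C(S^{B(o(e_i), R)})$, and the hypothesis $d_{\cX}(\La_1, \La_2) > R$ together with $o(e_i) \in \La_2$ forces $B(o(e_i), R) \cap \La_1 = \emptyset$. Hence $\nabla_{e_i} f$ is insensitive to the $\La_1$-coordinates, giving $\nabla_{e_i}f(\tilde\e_i) = \nabla_{e_i}f(\hat\e_i)$; combined with $f(\e|_{\La_1}) = f(\e'|_{\La_1})$, this forces $H(\e) = H(\e')$. The main obstacle — and the reason the range condition $d_{\cX}(\La_1, \La_2) > R$ is essential — is precisely this disjointness estimate, which is the technical heart of \cref{lem: dependence} transplanted to the bilinear setting. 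Running the symmetric argument on $\La_1$ then exhibits $H$ as factoring through $\cM_{\abs{\La_1}} \times \cM_{\abs{\La_2}}$, producing the desired function $h^{\La_1,\La_2}_f$ and completing the proof.
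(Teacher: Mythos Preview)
Your proof is correct and follows essentially the same approach as the paper: reduce to configurations supported on $\La_1\cup\La_2$, show the expression depends only on $\bsxi_{\!\La_2}(\e)$ and $\e|_{\La_1}$ using that $\nabla_{e}f$ for $e\in E_{\La_2}$ is insensitive to $\La_1$ because $B(o(e),R)\cap\La_1=\emptyset$, and then swap the roles of $\La_1$ and $\La_2$. The only difference is cosmetic: the paper packages the path-lifting and telescoping step as an invocation of \cref{lem: dependence} (with $Y=\La_1$, $W=B(\La_1,R)$), whereas you reprove that lemma's content inline---as you yourself note.
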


\begin{proof}
	It is sufficient to prove the statement for $\eta=\eta|_{\La_1\cup \La_2}$.
	Let $Y=\La_1$ and $W=B(\La_1,R)$.
	Since $\La_2\cap W=\emptyset$, we have $\eta|_W=\eta|_{\La_1}$ and
	\[
		(f-\iota^W f)(\eta|_{\La_1\cup \La_2})=f(\eta|_{\La_1\cup \La_2})-f(\eta|_{\La_1})
		=\iota^{\La_1\cup \La_2}f(\e)-\iota^{\La_1} f(\e).
	\]
	This equality and \cref{lem: dependence} show that the left hand side of \eqref{eq: pair}
	depends only on $\bsxi_{\!\La_1}(\eta)$ and $\eta|_{\La_2}$.
	By replacing the roles of $\La_1$ and $\La_2$, we see that 
	the left hand side of \eqref{eq: pair} depends only on 
	$\bsxi_{\!\La_1}(\eta)\in\cM_{\abs{\La_1}}$ and $\bsxi_{\!\La_2}(\eta)\in\cM_{\abs{\La_2}}$.
\end{proof}

By definition, for any finite connected $\La_1,\La_2\subset X$ and
$\alpha\in\cM_{\abs{\La_1}}$ and $\beta\in\cM_{\abs{\La_2}}$,
we have 
\begin{equation}\label{eq: sym}
	h^{\La_1,\La_2}_f(\alpha,\beta)=h^{\La_2,\La_1}_f(\beta,\alpha).
\end{equation}

%
%
%
\section{Criterion for Uniformity}\label{sec: criterion}
%
%
%

In this section, we introduce the notion of an essentially Euclidean crystal lattice,
and to prove a criterion \cref{prop: criterion} for ensuring that a function on $S^X_*$ 
for such crystal lattice is uniform.  
This criterion will be used in the next section
to prove \cref{thm: 0} concerning the surjectivity of the differential
from the space of uniform functions to closed uniform forms.
In \S\ref{subsec: EE}, we will give examples of essentially Euclidean crystal
lattices.

%
\subsection{Pairings for Essentially Euclidean Crystal Lattices}\label{subsec: EECL}
%

In what follows, let $\cX=(X,E)$ be a crystal lattice for the action of a finitely generated abelian group $G$.
In this subsection, we introduce the notion of 
\emph{essentially Euclidean crystal lattices}, 
and construct certain pairings associated to each function $f\in C(S^X_*)$ whose differential is uniform.
We first introduce a Euclidean type coordinate on $\cX$, which we call a \emph{block coordinate}.

We let $F \subset G$ be a free subgroup of finite index in $G$.  
Note that in this case, the rank of $F$ coincides with that of $G$.
We say that 
$\La_F$ is a  \emph{fundamental domain} of $\cX$ for the action of $F$, if $\La_F$
is connected, and $\La_F$ gives a representative of the orbits of $X$ with respect to the action of $F$,
so that for any $x\in X$, there exists a unique $\sigma\in F$ such that $x\in\sigma(\La_F)$.
Note that $\La_F$ is finite since $\cX/F$ is finite.
We let $\cS^+=\{\sigma_1,\ldots,\sigma_d\}$ be a free generator of $F$.
We call the pair $(\La_F,\cS^+)$ a \emph{coordinate} of $X$.
For any $\tau\in F$, we let $(n_1(\tau),\ldots,n_d(\tau))\in\Z^d$ be such that 
$\tau=\sigma_1^{n_1(\tau)}\cdots\sigma_d^{n_d(\tau)}$.  
We define the \emph{block coordinate} of  $x\in X$ for $(\La_F,\cS^+)$
to be $(n_1(\tau),\ldots,n_d(\tau))\in\Z^d$,
where $\tau\in F$ is such that $x\in\tau(\La_F)$.
We let $u_{\cS^+}\colon X\rightarrow\Z^d$
mapping any $x\in X$ to its block coordinate.
For the set of edges
\begin{equation}\label{eq: block}
	\bbE_{\cS}\coloneqq\{(\bar x,\bar x') \in\Z^d\times\Z^d\mid 
	\exists e\in E, u_{\cS^+}(o(e))=\bar x, u_{\cS^+}(t(e))=\bar x'  \},
\end{equation}
the pair  $\Z_{\cS}\coloneqq (\Z^d,\bbE_{\cS})$ form a connected locally finite graph
such that the block coordinate gives a morphism of multi-graphs $u_{\cS^+}\colon\cX\rightarrow\Z_{\cS}$.

\begin{definition}
	\begin{enumerate}
	\item
	We define the \emph{block distance} on $\cX=(X,E)$ to be the function 
	$d_{\cS}\colon X\times X\rightarrow\R$
	given by
	\[
		d_{\cS}(x,x')\coloneqq d_{\Z_{\cS}}(u_{\cS^+}(x),u_{\cS^+}(x'))
	\]
	for any $x,x'\in X$, where $d_{\Z_{\cS}}$ denotes the graph distance on $\Z_{\cS}$.
	Note that by definition,  we have
	 \begin{equation*}
	  	d_\cX(x,x')\leq\diam(\La_F)d_{\cS}(x,x').
	\end{equation*}
	The block distance is not a distance in the usual sense since we may have $d_{\cS}(x,x')=0$
	even if $x\neq x'$.
	\item We define the \emph{block ball} in $\cX=(X,E)$ 
	with center $x\in X$ and radius $R\geq 0$ by
	\[
		\sB(x,R)\coloneqq\{x'\in X\mid d_\cS(x,x') \leq R\},
	\]
	which is a finite connected subset of $X$.
	\item For any $\La\in\sI_X$, we define the \emph{block diameter} of $\La$ by
	\[
		\diam_{\cS}(\La)\coloneqq\inf_{x,x'\in\La}d_{\cS}(x,x').
	\]
	\end{enumerate}
\end{definition}

\begin{definition}\label{def: EE}
	We say that a crystal lattice $\cX=(X,E)$ is \emph{essentially Euclidean} for the action of $G$,
	if there exists a free subgroup $F\subset G$ of finite index with
	coordinate $(\La_F,\cS^+)$ such that for the graph $\Z_{\cS}=(\Z^d,\bbE_{\cS})$, we have
	\begin{equation}\label{eq: EEC}
		d_{\Z_{\cS}}=d_{\Z^d}
	\end{equation}
	on $\Z^d$, where $d_{\Z^d}$ denotes the graph distance on the Euclidean lattice  $\Z^d=(\Z^d,\bbE^d)$.
	In this case, $\Z_\cS$ coincides with $\Z^d$ except for possibly
	simple loops in $\Z_\cS$.
\end{definition}

If $\cX$ is a crystal lattice which is essentially Euclidean for the action of $G$,
and if we fix a coordinate $(\La_F,\cS^+)$ satisfying \eqref{eq: EEC},
then by definition, the topology of $\Z_{\cS}$ corresponds to that of the 
Euclidean lattice $\Z^d=(\Z^d,\bbE^d)$.
Any subset $\bar\La\subset\Z^d$ is connected
if and only if the inverse image
\[
	\La\coloneqq u^{-1}_{\cS^+}(\bar\La)
\]
is connected in $\cX$.  

\begin{lemma}
	Suppose $\cX$ is essentially Euclidean for the action of $G$,
	and fix a coordinate  $(\La_F,\cS^+)$ satisfying \eqref{eq: EEC}.
	For any $x\in X$ and $R\geq 0$, the complement $X\setminus\sB(x,R)$
	is a connected infinite multi-graph if $\dim\cX>1$, and has exactly two connected components
	which are both infinite multi-graphs if $\dim\cX=1$.
\end{lemma}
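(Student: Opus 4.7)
The plan is to reduce the statement to the classical topological facts about complements of balls in the Euclidean lattice $\Z^d=(\Z^d,\bbE^d)$ by pulling them back through the block coordinate $u_{\cS^+}$. Setting $\bar x \coloneqq u_{\cS^+}(x)$, the essentially Euclidean hypothesis $d_{\Z_\cS}=d_{\Z^d}$ together with the definition of the block ball gives
\[
\sB(x,R)=u_{\cS^+}^{-1}\bigl(\bar B(\bar x,R)\bigr),\qquad X\setminus\sB(x,R)=u_{\cS^+}^{-1}\bigl(\Z^d\setminus\bar B(\bar x,R)\bigr),
\]
where $\bar B(\bar x,R)\subset\Z^d$ denotes the closed $R$-ball for $d_{\Z^d}$. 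This identifies the combinatorial problem on $\cX$ with one on $\Z^d$.

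First I would recall the elementary fact that $\Z^d\setminus\bar B(\bar x,R)$ is a connected subset of $\Z^d$ when $d>1$, and when $d=1$ it decomposes as the disjoint union of the two infinite half-lines $\{\bar x+R+k\mid k\geq1\}$ and $\{\bar x-R-k\mid k\geq1\}$. For $d>1$ one connects any two outside points by an explicit axis-parallel detour through a coordinate vector far from $\bar x$; for $d=1$ the ball manifestly separates $\Z$ into the two pieces above, each obviously connected.

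Next I would invoke the observation recorded immediately after \cref{def: EE}: for an essentially Euclidean crystal lattice with fixed coordinate $(\La_F,\cS^+)$ satisfying \eqref{eq: EEC}, a subset $\bar\La\subset\Z^d$ is connected in $\Z^d$ if and only if its preimage $u_{\cS^+}^{-1}(\bar\La)$ is connected in $\cX$. Applied to $\bar\La=\Z^d\setminus\bar B(\bar x,R)$, this yields connectedness of $X\setminus\sB(x,R)$ when $d>1$. When $d=1$, applying the observation to each half-line shows that each of the two preimages is connected in $\cX$; moreover, any edge $e\in E$ joining vertices of the two preimages would, by the defining relation \eqref{eq: block}, produce an edge of $\bbE_\cS$ joining the two half-lines in $\Z^1$, which does not exist since they are separated by $\bar B(\bar x,R)$. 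Hence these are exactly the two distinct connected components.

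Infiniteness of the components is automatic, since each fiber $u_{\cS^+}^{-1}(\bar y)=\tau(\La_F)$ is nonempty (with $\tau\in F$ the unique element of block coordinate $\bar y$), so the preimage of any infinite subset of $\Z^d$ is infinite in $X$. I anticipate no real obstacle: all the substantive work has been packaged into the pullback-connectedness observation recorded after \cref{def: EE} and into the elementary Euclidean topology; the only point requiring slight care is the separation of the two components when $d=1$, which is handled by tracing edges of $\cX$ through the definition of $\bbE_\cS$.
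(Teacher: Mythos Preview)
Your proposal is correct and follows the same route as the paper: both reduce the question to the corresponding fact for the Euclidean lattice $\Z^d$ by pulling back along the block coordinate $u_{\cS^+}$, using the observation that connectedness of subsets of $\Z^d$ corresponds to connectedness of their preimages in $\cX$. The paper's proof is a one-line sketch asserting the bijection of connected components, while you have spelled out the Euclidean facts, the separation argument in the $d=1$ case, and the infiniteness of the components.
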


\begin{proof}
	This follows from the assertion for the Euclidean lattice $\Z^d$,
	noting that the connected components of  $X\setminus\sB(x,R)$
	corresponds bijectively via $u_{\cS^+}$
	with the connected components of $\Z^d\setminus 
	\{ \tilde x' \in \Z^d \mid d_{\Z^d}(\tilde x',u_{\cS^+}(x))\leq R\}$.
\end{proof}

For the rest of this section, we assume that $\cX=(X,E)$ is a crystal lattice which is essentially
Euclidean.  Let $\cM\coloneqq\bsxi_{\!X}(S^X_*)$, where
\begin{equation}\label{eq: bsxi}
	\bsxi_{\!X}\colon S^X_*\rightarrow\Hom_\R(\C{S},\R), \qquad
	\e\mapsto (\xi \mapsto\xi_X(\e))
\end{equation}
for $\xi_X\coloneqq\sum_{x\in X}\xi_x\in C^0_\unif(S^X)$.
We will construct a certain pairing $h_f\colon\cM\times\cM\rightarrow\R$
for functions $f\in C(S^X_*)$ whose differential
is uniform.  This pairing will used in \S \ref{subsec: UL}
to give a criterion for $f$ to be a uniform function.
For an integer $r\in\N$, let
\[
	\sD^r\coloneqq\{ \sB(x,r') \mid x\in X, r'\in\N,\abs{\sB(x,r')}\geq r\}
\]
and
\[
	\sB^r_R\coloneqq\{ (D_1,D_2) \mid D_1,D_2\in\sD^r, d_{\cS}(D_1,D_2)>R\},
\]
where $d_{\cS}(D_1,D_2)\coloneqq\min_{x\in D_1,x'\in D_2}d_\cS(x,x')$.
We define a relation in $\sB^r_R$ by
$(D_1,D_2)\leftrightarrow(D_1,D_2')$ if 
$D_2$ and $D_2'$ are in the same connected component of
$X\setminus\sB(D_1,R)$, and similarly 
$(D_1,D_2)\leftrightarrow(D_1',D_2)$ if 
$D_1$ and $D_1'$ are in the same connected component of
$X\setminus\sB(D_2,R)$.  

For any $\La\subset X$ and $R\geq 0$, 
we let $\sB(\La,R)\coloneqq\bigcup_{x\in X}\sB(x,R)$.

\begin{lemma}\label{lem: independence}
	Let $f\in C(S^X_*)$ such that $\partial_{\cX}f\in C^1_R(S^X)$ for some $R>0$.
	For any finite connected $\La_1,\La_2,\La_2'$, suppose $\La_2$ and $\La_2'$ are in the same 
	connected component of $X\setminus\sB(\La_1,R)$.
	Then the functions $h^{\La_1,\La_2}$ and $h^{\La_1,\La_2'}$ of \cref{prop: pairing} satisfy
	\[
		h^{\La_1,\La_2}_f(\alpha,\beta)=h^{\La_1,\La_2'}_f(\alpha,\beta)
	\]
	for any $\alpha\in\cM_{\abs{\La_1}}$ and $\beta\in\cM_{\abs{\La_2}}\cap\cM_{\abs{\La_2'}}
	=\cM_{\min\{\abs{\La_2},\abs{\La_2'}\}}$.
\end{lemma}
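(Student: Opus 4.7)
My plan is to compare the two pairings by choosing a finite connected intermediate region $\La$ containing both $\La_2$ and $\La_2'$, and showing that a suitable auxiliary quantity is constant along a well-chosen path in $S^\La$.

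First, since $\La_2$ and $\La_2'$ lie in the same connected component $\sU$ of $X\setminus\sB(\La_1,R)$, I would pick a path in $\sU$ from a vertex of $\La_2$ to a vertex of $\La_2'$ and let $\La$ be the union of $\La_2$, this path, and $\La_2'$. Then $\La\subset X\setminus\sB(\La_1,R)$ is finite, connected, and satisfies $d_\cS(\La_1,\La)>R$. Because the block-coordinate morphism $u_{\cS^+}\colon\cX\to\Z_\cS$ is a morphism of multi-graphs, any path in $\cX$ projects to a walk of the same length in $\Z_\cS$, so $d_\cS\leq d_\cX$ and hence $d_\cX(\La_1,\La)>R$; thus \cref{prop: pairing} and \cref{lem: dependence} both apply to the pair $(\La_1,\La)$.

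Next, I would fix a configuration $\bar\eta\in S^{\La_1}$ with $\bsxi_{\!\La_1}(\bar\eta)=\alpha$, and choose $\tilde\eta^{(1)},\tilde\eta^{(2)}\in S^\La$ supported (off the base state) on $\La_2$ and $\La_2'$ respectively, realising $\bsxi_{\!\La_2}(\tilde\eta^{(1)}|_{\La_2})=\beta=\bsxi_{\!\La_2'}(\tilde\eta^{(2)}|_{\La_2'})$. Such configurations exist since $\beta\in\cM_{\abs{\La_2}}\cap\cM_{\abs{\La_2'}}$ and extending by base state preserves $\xi$-sums (as $\xi(*)=0$). Both configurations satisfy $\bsxi_{\!\La}=\beta$, so the irreducibly quantified hypothesis applied to the finite locale $(\La,E_\La)$ produces a path $\tilde\eta^{(1)}=\tilde\eta_1,\ldots,\tilde\eta_{N+1}=\tilde\eta^{(2)}$ in $S^\La$; lifting to $S^X_*$ by taking the value $\bar\eta$ on $\La_1$ and base state outside $\La_1\cup\La$ yields a path $\eta_1,\ldots,\eta_{N+1}$ in $S^X_*$ with $\eta_i|_{\La_1}=\bar\eta$ and $\bsxi_{\!\La}(\eta_i)=\beta$ for every $i$ (since $\xi$ is conserved across each transition).

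The key step is to introduce
\[
    P(\eta)\coloneqq\iota^{\La_1\cup\La}f(\eta)-\iota^\La f(\eta)
\]
and prove it is constant along the lifted path: running the argument of \cref{prop: pairing} with the roles of the two regions exchanged, i.e.\ applying \cref{lem: dependence} with $Y=\La$ and $W=B(\La,R)$ (disjoint from $\La_1$ since $d_\cX(\La_1,\La)>R$), one finds that $P(\eta)$ depends only on $\eta|_{\La_1}$ and $\bsxi_{\!\La}(\eta)$, whence $P(\eta_1)=P(\eta_{N+1})$. Finally, because $\tilde\eta^{(1)}$ is base state on $\La\setminus\La_2$, we have $\eta_1|_{\La_1\cup\La}=\eta_1|_{\La_1\cup\La_2}$ and $\eta_1|_\La=\eta_1|_{\La_2}$, so $P(\eta_1)=\iota^{\La_1\cup\La_2}f(\eta_1)-\iota^{\La_2}f(\eta_1)$, and symmetrically $P(\eta_{N+1})=\iota^{\La_1\cup\La_2'}f(\eta_{N+1})-\iota^{\La_2'}f(\eta_{N+1})$; subtracting the common value $\iota^{\La_1}f(\eta_1)=\iota^{\La_1}f(\eta_{N+1})$ and invoking \cref{prop: pairing} yields the desired equality. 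The main obstacle is to arrange the endpoints of the configuration-space path so that their supports are concentrated in $\La_2$ and $\La_2'$ respectively, so that $P$ literally matches each side of the identity at the corresponding endpoint; once this is arranged, the symmetric application of \cref{lem: dependence} converts the conservation of $\xi$ along the path into the required equality of pairings.
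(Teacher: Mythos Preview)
Your argument is correct and follows essentially the same route as the paper: choose a finite connected $Y$ (your $\La$) inside $X\setminus\sB(\La_1,R)$ containing both $\La_2$ and $\La_2'$, apply \cref{lem: dependence} with this $Y$ and $W=B(Y,R)$ (disjoint from $\La_1$ since $d_\cS\leq d_\cX$), and read off the equality from two configurations supported on $\La_1\cup\La_2$ and $\La_1\cup\La_2'$ respectively. The explicit construction of a path $\tilde\eta_1,\ldots,\tilde\eta_{N+1}$ in $S^\La$ is harmless but redundant, since \cref{lem: dependence} already absorbs the irreducibly-quantified path argument; once you know $P(\eta)$ depends only on $\eta|_{X\setminus\La}$ and $\bsxi_{\!\La}(\eta)$, you can compare $\eta_1$ and $\eta_{N+1}$ directly without the intermediate steps.
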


\begin{proof}
	Let  $\alpha\in\cM_{\abs{\La_1}}$ and $\beta\in\cM_{\abs{\La_2}}\cap\cM_{\abs{\La_2'}}
	=\cM_{\min\{\abs{\La_2},\abs{\La_2'}\}}$.
	Since $\La_1\cap \La_2=\La_1\cap \La_2'=\emptyset$, there exists configurations 
	$\e,\e'\in S^X_*$ such that 
	\begin{align*}
		\e&=\e|_{\La_1\cup \La_2},&
		\e'&=\e'|_{\La_1\cup \La_2'},&
	     \e|_{\La_1}&=\e'|_{\La_1},&
		\bsxi_{\!\La_1}(\e)&=\bsxi_{\!\La_1}(\e')=\alpha,&
		\bsxi_{\!\La_2}(\e)&=\bsxi_{\!\La_2'}(\e')=\beta.
	\end{align*}
	Let $Y\subset X\setminus\sB(\La_1,R)$ be a finite connected subset such that $\La_2\cup \La_2'\subset Y$.
	By construction, $\bsxi_Y(\e)=\bsxi_Y(\e')=\beta$ and $\e|_{X\setminus Y}=\e'|_{X\setminus Y}$.
	If we let $W=B(Y,R)$, then by \cref{lem: dependence}, we have
	\[
		(f-\iota^W f)(\e)=(f-\iota^W f)(\e').
	\]
	Since $W\cap(\La_1\cup \La_2)=\La_2$ and $W\cap(\La_1\cup \La_2')=\La_2'$,
	we have
	\begin{align*}
		(f-\iota^W f)(\e)&=(f-\iota^W f)(\e|_{\La_1\cup \La_2})=\iota^{\La_1\cup \La_2}f(\e)-\iota^{\La_2}f(\e)\\
		(f-\iota^W f)(\e')&=(f-\iota^W f)(\e'|_{\La_1\cup \La_2'})=\iota^{\La_1\cup \La_2'}f(\e')-\iota^{\La_2}f(\e').
	\end{align*}
	Combined with the fact that $\iota^{\La_1} f(\e)=\iota^{\La_1}f(\e')$,
	we see that
	\[
		h_f^{\La_1,\La_2}(\alpha,\beta)=h_f^{\La_1,\La_2}(\bsxi_{\!\La_1}(\e),\bsxi_{\!\La_2}(\e))
		=h_f^{\La_1,\La_2'}(\bsxi_{\!\La_1}(\e'),\bsxi_{\!\La_2'}(\e'))=h_f^{\La_1,\La_2'}(\alpha,\beta)
	\]
	as desired.
\end{proof}	

\begin{definition}\label{def: pairing}
	Let $f\in C(S^X_*)$ such that $\partial_{\cX}f\in C^1_R(S^X)$ for some $R>0$.
	We define the paring $h_f\colon\cM\times\cM\rightarrow\R$ on $\cM=\bigcup_{k\in\N}\cM_k$
	as follows.
	We let $k\in\N$ such that $\alpha,\beta\in\cM_k$.  By taking an arbitrary $(D_1,D_2)\in\sB^k_R$,
	we let
	\[
		h_f(\alpha,\beta)\coloneqq h^{D_1,D_2}_f(\alpha,\beta).
	\]
\end{definition}

If $(D_1,D_2),(D_1',D_2')\in\sB^k_R$, if we take $D''_2\in\sD^k$ such that 
$D_2''\subset X\setminus(\sB(D_1,R)\cup\sB(D_1',R))$, then we have
\[
	( D_1,D_2)\leftrightarrow (D_1, D_2'')\leftrightarrow (D_1', D_2'')\leftrightarrow (D_1',D_2').
\]
Using \cref{lem: independence} and \eqref{eq: sym}, we may prove that
\[
	h^{D_1,D_2}_f(\alpha,\beta)=h^{D_1, D_2''}_f(\alpha,\beta)=
	h^{D_1', D_2''}_f(\alpha,\beta)=h^{D_1',D_2'}_f(\alpha,\beta), 
\]
hence that the pairing $h_f$ is independent of the choice of $(D_1,D_2)\in\sB^k_R$.

\begin{proposition}\label{prop: cocycle}
	Let $f\in C(S^X_*)$ such that $\partial_{\cX}f\in C^1_R(S^X)$ for some $R>0$.
	Then the pairing $h_f\colon\cM\times\cM\rightarrow\R$ satisfies the cocycle condition
	\begin{equation}\label{eq: cocycle}
		h_f(\alpha,\beta)+h_f(\alpha+\beta,\gamma)=h_f(\beta,\gamma)+h_f(\alpha,\beta+\gamma)
	\end{equation}
	for any $\alpha,\beta,\gamma\in\cM$.
\end{proposition}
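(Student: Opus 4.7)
The plan is to compute the quantity $f(\e) - f(\e|_{D_1}) - f(\e|_{D_2}) - f(\e|_{D_3})$ in two ways, reading off each side of the cocycle as a sum of two pair-interaction values. Concretely, I fix $k \in \N$ large enough that $\alpha, \beta, \gamma \in \cM_k$ (so that $\alpha+\beta, \beta+\gamma \in \cM_{2k}$), and choose three block balls $D_1, D_2, D_3 \in \sD^{2k}$ pairwise separated in block distance by more than $R$. Using the essentially Euclidean structure, I arrange them so that $D_2, D_3$ lie in a common connected component of $X \setminus \sB(D_1, R)$ and $D_1, D_2$ lie in a common connected component of $X \setminus \sB(D_3, R)$: for $\dim \cX \geq 2$ these complements are connected and any placement works, while for $\dim \cX = 1$ it suffices to take the three blocks in collinear order. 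I then thicken $D_1 \cup D_2$ (resp.\ $D_2 \cup D_3$) into a finite connected set $\tilde D_{12}$ (resp.\ $\tilde D_{23}$) by adjoining a connecting path in the appropriate component, so that $d_\cX(D_3, \tilde D_{12}) > R$ and $d_\cX(D_1, \tilde D_{23}) > R$. Finally, I fix $\e \in S^X_*$ supported on $D_1 \cup D_2 \cup D_3$ with $\bsxi_{D_i}(\e) = \alpha_i$, where $(\alpha_1, \alpha_2, \alpha_3) = (\alpha, \beta, \gamma)$.

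Since $\e$ is at base state on the connecting paths, one has $\bsxi_{\tilde D_{12}}(\e) = \alpha+\beta$, $\bsxi_{\tilde D_{23}}(\e) = \beta+\gamma$, $\e|_{\tilde D_{12}} = \e|_{D_1 \cup D_2}$, and $\e|_{\tilde D_{23}} = \e|_{D_2 \cup D_3}$. Applying \cref{prop: pairing} to the four pairs $(D_1, D_2)$, $(D_2, D_3)$, $(\tilde D_{12}, D_3)$, $(D_1, \tilde D_{23})$ and using the identity $\iota^\La f(\e) = f(\e|_\La)$ then yields
\begin{align*}
h_f^{D_1, D_2}(\alpha, \beta) + h_f^{\tilde D_{12}, D_3}(\alpha+\beta, \gamma) &= f(\e) - f(\e|_{D_1}) - f(\e|_{D_2}) - f(\e|_{D_3}),\\
h_f^{D_2, D_3}(\beta, \gamma) + h_f^{D_1, \tilde D_{23}}(\alpha, \beta+\gamma) &= f(\e) - f(\e|_{D_1}) - f(\e|_{D_2}) - f(\e|_{D_3}),
\end{align*}
the intermediate terms $\pm f(\e|_{D_1 \cup D_2})$ and $\pm f(\e|_{D_2 \cup D_3})$ cancelling in the respective sums.

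It remains to identify each $h_f^{\cdot,\cdot}$ on the left with the corresponding $h_f(\cdot,\cdot)$ of \cref{def: pairing}. For the pure block-ball pairs this is immediate. For the mixed pairs I invoke \cref{lem: independence}: choosing a block ball $\tilde D \in \sD^{2k}$ in the same connected component of $X \setminus \sB(D_1, R)$ as $\tilde D_{23}$ (possible by the essentially Euclidean hypothesis), \cref{lem: independence} gives $h_f^{D_1, \tilde D_{23}}(\alpha, \beta+\gamma) = h_f^{D_1, \tilde D}(\alpha, \beta+\gamma)$, and the latter equals $h_f(\alpha, \beta+\gamma)$ by \cref{def: pairing} together with the block-ball independence established immediately afterwards. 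The analogous argument, combined with the symmetry \eqref{eq: sym}, handles $h_f^{\tilde D_{12}, D_3}(\alpha+\beta, \gamma)$. The main obstacle is the simultaneous geometric placement of $D_1, D_2, D_3, \tilde D_{12}, \tilde D_{23}$ meeting all separation and connectedness conditions at once; this is precisely where the essentially Euclidean hypothesis is indispensable, particularly in the one-dimensional case.
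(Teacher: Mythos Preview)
Your proof is correct and follows essentially the same telescoping strategy as the paper: place a configuration on three well-separated regions $D_1,D_2,D_3$ and compute $f(\e)-f(\e|_{D_1})-f(\e|_{D_2})-f(\e|_{D_3})$ two ways. The only difference is that the paper, by placing $D_1,D_2,D_3$ collinearly at explicit Euclidean coordinates, can take the combined regions $D_{12}\supset D_1\cup D_2$ and $D_{23}\supset D_2\cup D_3$ to themselves be block balls, so that $(D_{12},D_3),(D_1,D_{23})\in\sB^k_R$ and the identification with $h_f$ is immediate from \cref{def: pairing}, bypassing your final appeal to \cref{lem: independence} and \eqref{eq: sym} for the mixed pairs.
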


\begin{proof}
	Take $k>0$ sufficiently large so that $\alpha,\beta,\gamma\in\cM_k$.
	We take $ D_1, D_2, D_3\in\sD^k$ sufficiently apart so that 
	$( D_1, D_2), ( D_1, D_3), ( D_2, D_3)\in\sB^k_R$ and
	$( D_1, D_2)\leftrightarrow( D_1, D_3)\leftrightarrow( D_2, D_3)$.
	We may take $\bar D_1,\bar D_2,\bar D_3$ to be balls of radius $k$ in the Euclidean lattice with
	center $(-2R-2k-2, 0,\ldots,0)$, $(0,\ldots,0)$,
	$(2R+2k+2,0,\ldots,0)\in\Z^d$, and we let $D_i\coloneqq u_{\cS^+}^{-1}(\bar D_i)$ for $i=1,2,3$.
	We let $\e\in S^X_*$ be such that $\e=\e|_{ D_1\cup D_2\cup D_2}$
	and $\bsxi_{\!D_1}(\e)=\alpha$, $\bsxi_{\!D_2}(\e)=\beta$, 
	$\bsxi_{\! D_3}(\e)=\gamma$.
	Next, take any $ D_{12}, D_{23}\in\sD^k$ such that 
	\begin{align*}
		 D_1\cup D_2&\subset D_{12}\subset X\setminus\sB( D_3,R),&
		 D_2\cup D_3&\subset D_{23}\subset X\setminus\sB( D_1,R).
	\end{align*}	
	We may take for example $D_{12}\coloneqq u_{\cS^+}^{-1}(\bar D_{12})$ 
	and $D_{23}\coloneqq u_{\cS^+}^{-1}(\bar D_{23})$, where
	$\bar D_{12}$ and $\bar D_{23}$ are balls of radius $R+2k+1$
	with centers $(-R-k-1,0,\ldots,0)$, $(R+k+1,0,\ldots,0)$ in $\Z^d$.
	Then $\bsxi_{\! D_{12}}(\e)=\alpha+\beta$ and $\bsxi_{\! D_{23}}(\e)=\beta+\gamma$.
	By \cref{lem: independence} and the definition of $h_f$, we have
	\begin{align*}
		h_f(\alpha,\beta)&=h_f^{ D_1, D_2}(\alpha,\beta)=\iota^{ D_1\cup   D_2}f(\e)
		-\iota^{ D_1}f(\e)-\iota^{ D_2}f(\e)\\
		h_f(\alpha+\beta,\gamma)&=h_f^{ D_{12}, D_3}(\alpha+\beta,\gamma)
		=\iota^{ D_{12}\cup   D_3}f(\e)
		-\iota^{ D_{12}}f(\e)-\iota^{ D_3}f(\e)\\
		h_f(\beta,\gamma)&=h_f^{ D_2, D_3}(\alpha,\beta)=\iota^{ D_2\cup   D_3}f(\e)
		-\iota^{ D_2}f(\e)-\iota^{ D_3}f(\e)\\
		h_f(\alpha,\beta+\gamma)&=h_f^{ D_1, D_{23}}(\alpha,\beta+\gamma)=\iota^{ D_1\cup D_{23}}f(\e)
		-\iota^{ D_1}f(\e)-\iota^{ D_{23}}f(\e).
	\end{align*}
	Our assertion follows from the fact that
	\begin{align*}
		\iota^{ D_{12}\cup D_{3}}f(\e)&=\iota^{ D_1\cup D_{23}}f(\e)=\iota^{ D_1\cup D_2\cup D_3}f(\e)
	\end{align*}
	and $\iota^{ D_{12}}f(\e)=\iota^{ D_1\cup D_2}f(\e)$, $\iota^{ D_{23}}f(\e)=\iota^{ D_2\cup D_3}f(\e)$.
\end{proof}

%
\subsection{Criterion for Uniform Locality}\label{subsec: UL}
%

In this subsection, we prove a criterion for uniformity.
Let $\cX=(X,E)$ be a crystal lattice for the action of a group $G$ which is essentially Euclidean,
and we fix a coordinate $(\La_F,\cS^+)$ satisfying \eqref{eq: EEC}.
Assume that $(S,\phi)$ is an interaction which is irreducibly quantified.

\begin{proposition}\label{prop: criterion}
	Let $f\in C(S^X_*)$ such that $\partial_{\cX}f\in C^1_R(S^X)$ for some $R>0$.
	If the pairing $h_f\colon\cM\times\cM\rightarrow\R$ satisfies $h_f\equiv0$,
	then $f$ is uniform.
\end{proposition}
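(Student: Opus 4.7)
The plan is to show that $f_\La = 0$ for every finite $\La \subset X$ with $\diam_\cS(\La) > R_0$, where $R_0$ is a constant depending only on $R$ and the fundamental domain $\La_F$. Since $d_\cX \le \diam(\La_F)\cdot d_\cS$, this will imply that $f$ is uniform with uniformity radius $\diam(\La_F)\cdot R_0$. Subtracting $f(\star)$ at the outset, we may assume $f_\emptyset = 0$, so that \cref{prop:BKS203.3} yields the Möbius inversion
\[
f_\La(\e) = \sum_{\La' \subset \La}(-1)^{|\La \setminus \La'|}\, f(\e|_{\La'}).
\]

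Fix $\La$ with $\diam_\cS(\La) > R_0$. I would use the essentially Euclidean structure of $\cX$ to pick a vertex $y \in \La$ and a block-radius $r$ of order $R$ such that $\La \setminus \sB(y, r) \neq \emptyset$; the key geometric input is that each block ball $\sB(y, r)$ is connected, and that the annular buffer $\sB(y, r+R) \setminus \sB(y, r)$ enforces block-distance $> R$ between $\sB(y, r)$ and $X \setminus \sB(y, r+R)$. Grouping each $\La' \subset \La$ with its \emph{add $y$}/\emph{drop $y$} variant rewrites the Möbius formula as
\[
f_\La(\e) = \sum_{\La'' \subset \La \setminus \{y\}} (-1)^{|(\La \setminus \{y\}) \setminus \La''|}\, D_y(\La'', \e),
\]
where $D_y(\La'', \e) := f(\e|_{\La'' \cup \{y\}}) - f(\e|_{\La''})$.

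The key step is to establish, under the hypothesis $h_f \equiv 0$, the \emph{localization property} $D_y(\La'', \e) = D_y(\La'' \cap \sB(y, r), \e)$ for every $\La'' \subset \La \setminus \{y\}$. Granted this, writing each $\La''$ as a disjoint union $\La'' = A \sqcup B$ with $A := \La'' \cap \sB(y, r)$ and $B := \La'' \setminus \sB(y, r)$, the double sum over $(A, B)$ factors as an $A$-sum times the outer factor $\sum_{B \subset \La \setminus (\sB(y, r) \cup \{y\})} (-1)^{|B|}$; this outer factor vanishes by the binomial identity, since $\La \setminus (\sB(y, r) \cup \{y\})$ is non-empty by construction, forcing $f_\La = 0$. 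The localization property itself reduces, via \cref{prop: pairing} and \cref{lem: independence}, to the additivity identities $\iota^{(A \cup \{y\}) \sqcup B} f = \iota^{A \cup \{y\}} f + \iota^B f$ and $\iota^{A \sqcup B} f = \iota^A f + \iota^B f$, which follow from $h_f \equiv 0$ whenever the two subsets in each pair are $\cX$-connected and at block-distance $> R$.

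The main obstacle is that for generic $\La''$ the sets $A \cup \{y\}$ and $B$ need not be $\cX$-connected, while the pairing vanishing from the hypothesis applies to connected pairs only. To bridge this I would extend the additivity inductively by peeling off one $\cX$-connected component of $B$ (and then of $A$) at a time, using \cref{lem: dependence} together with the buffering annulus $\sB(y, r+R) \setminus \sB(y, r)$ to preserve the required separation $d_\cX > R$ at each stage. The essentially Euclidean hypothesis is crucial precisely here: the connectedness and regularity of block balls under the block metric are what allow a single constant $R_0$ depending only on $R$ and $\La_F$ to suffice independently of $|\La|$.
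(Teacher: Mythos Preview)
Your overall strategy --- show $f_\La=0$ once $\diam_\cS(\La)$ is large by isolating a vertex and proving a localization identity for the add/drop difference --- is essentially the paper's strategy: the paper's \cref{lem: criterion} is exactly a criterion of this type (stated via $\iota^\La$ and a minimality argument rather than explicit M\"obius inversion), and the localization identity you want for $D_y$ is precisely the content of \cref{prop: wow} at the block level.

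The gap is in your proof of the localization. First, the separation you claim is not there: with $A=\La''\cap\sB(y,r)$ and $B=\La''\setminus\sB(y,r)$, nothing prevents $\La''$ from meeting the annulus $\sB(y,r+R)\setminus\sB(y,r)$, so $d_\cS(A,B)>R$ need not hold, and the pairing identities you invoke are unavailable. Second, even granting separation, your ``peel off one connected component of $B$ at a time'' does not iterate: after removing one component of $B$, the remaining piece $(A\cup\{y\})\cup(B\setminus B_1)$ is still disconnected, so you cannot reapply \cref{prop: pairing} or \cref{lem: independence}, which require both sets to be connected. The hypothesis $h_f\equiv0$ is, by \cref{def: pairing}, a statement about block balls only, and you have not produced a mechanism to upgrade it to arbitrary finite subsets.

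The paper resolves exactly this point in \cref{prop: wow} by a different device: rather than splitting $\La''$ into near and far parts and appealing to additivity, it exploits that for the essentially Euclidean lattice the complement $X\setminus\sB(x,R)$ is itself connected (or has two infinite connected components when $\dim\cX=1$). One then uses \cref{lem: dependence} on this connected complement $Y_1$ to transport the entire far configuration onto a single distant block ball $D_1$, after which $h_f\equiv0$ is applied directly to the pair $(D_1,D_1')$ of block balls. This sidesteps both obstacles: no annular gap in $\La$ is needed, and one never has to decompose a disconnected set. Replacing your peeling argument by this transport-to-a-block-ball trick would complete your proof along the paper's lines.
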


We will give a proof of \cref{prop: criterion} at the end of this section.
We first give a condition for a function to be uniform.

\begin{lemma}\label{lem: criterion}
	Suppose $f\in C(S^X_*)$.
	If there exists integer $R>0$ such that for any finite $\La\subset X$ and $x\in X$,
	we have
	\begin{equation}\label{eq: sum}
		\iota^\La f-\iota^{\La\setminus\sB(x,0)}f=\iota^{\La\cap\sB(x,R)} f-\iota^{\La\cap\sB^*(x,R)}f,
	\end{equation}
	then $f$ is a uniform function in $C_\unif(S^X)$.
	Here, $\sB^*(x,R)\coloneqq\sB(x,R)\setminus\sB(x,0)$.
\end{lemma}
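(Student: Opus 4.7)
The plan is to use the unique expansion $f=\sum_{\La''\in\sI_X}f_{\La''}$ with $f_{\La''}\in C_{\La''}(S^X)$ given by \cref{prop:BKS203.3}, and show that $f_{\La''}=0$ whenever the block diameter $\diam_\cS(\La'')>R$. Expanding both sides of \eqref{eq: sum} via this decomposition, one computes that $\iota^\La f-\iota^{\La\setminus\sB(x,0)}f$ equals $\sum_{\La'''\subset\La,\,\La'''\cap\sB(x,0)\neq\emptyset}f_{\La'''}$, while $\iota^{\La\cap\sB(x,R)}f-\iota^{\La\cap\sB^*(x,R)}f$ equals the same sum restricted to $\La'''\subset\sB(x,R)$. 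Hence the hypothesis is equivalent to
\[
\sum_{\substack{\La'''\subset\La\\ \La'''\cap\sB(x,0)\neq\emptyset\\ \La'''\not\subset\sB(x,R)}}f_{\La'''}=0
\]
for every finite $\La\subset X$ and every $x\in X$.

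Next, I would prove by induction on $n=|\La''|$ that $f_{\La''}=0$ whenever $\diam_\cS(\La'')>R$. The case $n=1$ is vacuous since $\diam_\cS(\La'')=0\leq R$. For the inductive step, fix $\La''$ with $|\La''|=n$ and $\diam_\cS(\La'')>R$, and choose $x,x'\in\La''$ with $d_\cS(x,x')>R$; then $\La''\cap\sB(x,0)\ni x$ and $x'\in\La''\setminus\sB(x,R)$. Applying the reformulated identity with $\La=\La''$, the term $f_{\La''}$ appears in the resulting sum. Any other contributor $\La'''\subsetneq\La''$ has $|\La'''|<n$, and picking $y\in\La'''\cap\sB(x,0)$ and $z\in\La'''\setminus\sB(x,R)$, the triangle inequality for the block distance yields $d_\cS(y,z)\geq d_\cS(x,z)-d_\cS(x,y)>R$, so $\diam_\cS(\La''')>R$. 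By the induction hypothesis each such $f_{\La'''}$ vanishes, leaving $f_{\La''}=0$.

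Finally, since $d_\cX(x,x')\leq\diam(\La_F)\cdot d_\cS(x,x')$, any $\La''$ with $\diam_\cS(\La'')\leq R$ satisfies $\diam(\La'')\leq R\cdot\diam(\La_F)$, and in particular lies in $\sI_{\cX,R'}$ for $R'\coloneqq R\cdot\diam(\La_F)$. Thus $f=\sum_{\La''\in\sI_{\cX,R'}}f_{\La''}$, exhibiting $f$ as a uniform function in the sense of \cref{def: UL}. The only subtle point is the triangle inequality observation that every proper subcontributor must again have block diameter $>R$; this relies precisely on the separation built into the pair $\sB(x,0)\subset\sB(x,R)$, and once that is in place the induction is routine.
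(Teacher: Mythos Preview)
Your proof is correct and follows essentially the same approach as the paper: both expand $\iota^\La f$ via \cref{prop:BKS203.3}, rewrite the hypothesis as the identity \eqref{eq: sum2}, and then show that $f_{\La''}=0$ whenever $\diam_\cS(\La'')>R$, concluding uniformity via the inequality $\diam(\La'')\leq\diam(\La_F)\diam_\cS(\La'')$. The only cosmetic difference is that the paper argues by taking a minimal element of $\{\La\in\sI_X\mid f_\La\neq0,\ \diam_\cS(\La)>R\}$, whereas you run an explicit induction on $|\La''|$; these are the same argument, and your triangle-inequality check that every proper subcontributor again has block diameter $>R$ is exactly the contrapositive of the paper's observation that $\La''\subsetneq\La$ with $\La''\cap\sB(x,0)\neq\emptyset$ and $\diam_\cS(\La'')\leq R$ forces $\La''\subset\La\cap\sB(x,R)$.
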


\begin{proof}
	First observe that for any $f\in C(S^X_*)$, we have
	\begin{align*}
		\iota^\La f&=\sum_{\La''\subset\La}f_{\La''}
		=\sum_{\La''\subset\La\setminus\sB(x,0)}f_{\La''}
		+\sum_{\substack{\La''\subset\La,\\\La''\cap\sB(x,0)\neq\emptyset}}f_{\La''}\\
		&=\iota^{\La\setminus\sB(x,0)}f +\sum_{\substack{\La''\subset\La,\\\La''\cap\sB(x,0)\neq\emptyset}}f_{\La''}.
	\end{align*}
	If \eqref{eq: sum} holds, then we have
	\begin{equation}\label{eq: sum2}
		\sum_{\substack{\La''\subset\La,\\ \La''\cap\sB(x,0)\neq\emptyset}}f_{\La''}
		=\sum_{\substack{\La''\subset\La\cap\sB(x,R),\\ \La''\cap\sB(x,0)\neq\emptyset}}f_{\La''}.
	\end{equation}
	Let $\sI'\coloneqq\{ \La\in\sI_X\mid f_{\La}\neq 0, \diam_{\cS}(\La)>R \}$,
	and assume that $\sI'\neq\emptyset$.
	We take $\La$ to be a minimal element in $\sI'$ with respect to inclusions, so that
	if $\La''\subsetneq\La$, then we have $f_{\La''}=0$ or $\diam_{\cS}(\La'')\leq R$.
	For such $\La$, take $x\in\La$ such that there exists $y\in\La$ with $d_\cS(x,y)>R$.
	Then we have
	\[
		\sum_{\substack{\La''\subset\La,\\ \La''\cap\sB(x,0)\neq\emptyset}}f_{\La''}
		=f_\La + \sum_{\substack{\La''\subsetneq\La,\\ \La''\cap\sB(x,0)\neq\emptyset}}f_{\La''}
		=f_\La+\sum_{\substack{\La''\subset\La\cap\sB(x,R),\\ \La''\cap\sB(x,0)\neq\emptyset}}f_{\La''}.
	\]
	Comparing this with equality \eqref{eq: sum2}, we see that $f_\La\equiv0$.
	This contradicts the fact that $\La\in\sI'$, hence that $\sI'=\emptyset$.
	In other words, any $\La\in\sI_X$ such that $f_\La\neq0$ satisfies $\diam_{\cS}(\La)\leq R$.
	If we let $R'\coloneqq \diam(\La_0)R$,
	then $\diam(\La)\leq\diam(\La_0)\diam_{\cS}(\La)\leq R'$.
	We see that
	\[
		f=\sum_{\La\in\sI_{\cX,R'}}f_\La,
	\]
	which proves that $f$ is uniform as desired.
\end{proof}

\begin{proposition}\label{prop: wow}
	Let $f\in C(S^X_*)$ such that $\partial_{\cX}f\in C^1_R(S^X)$ for some $R>0$.
	If the pairing $h_f\colon\cM\times\cM\rightarrow\R$ satisfies $h_f\equiv0$,
	then for any $x\in X$ and $\La\in\sI_X$ such that $d_\cS(x,\La)>R$, 
	we have
	\[
		\iota^{\La\cup\sB(x,R)}f-\iota^{\La\cup\sB^*(x,R)}f
		=\iota^{\sB(x,R)}f-\iota^{\sB^*(x,R)}f.
	\]
\end{proposition}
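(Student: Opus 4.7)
The strategy is to reduce the identity to a vanishing statement for coefficients in the canonical expansion $f = \sum_{\La'' \in \sI_X} f_{\La''}$ from \cref{prop:BKS203.3}, and then establish this vanishing via $h_f \equiv 0$ and \cref{prop: pairing}. Using $\iota^Y f = \sum_{\La'' \subset Y} f_{\La''}$ for each of the four sets $Y$ appearing in the identity, together with $\La \cap \sB(x, R) = \emptyset$ (which follows from $d_\cS(x, \La) > R$), the difference of the two sides simplifies to $\sum_{\La'' \in \mathcal{S}} f_{\La''}$, where $\mathcal{S}$ is the collection of $\La'' \subset \La \cup \sB(x, R)$ with $\La'' \cap \La \neq \emptyset$ and $\La'' \cap \sB(x, 0) \neq \emptyset$. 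By the uniqueness of the canonical decomposition, it suffices to prove $f_{\La''} \equiv 0$ for each $\La'' \in \mathcal{S}$.

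The key consequence of $h_f \equiv 0$ is the following: evaluating the identity of \cref{prop: pairing} at the base configuration $\star$ forces $f_\emptyset = 0$, and then, for any connected finite $\La_1, \La_2 \subset X$ with $d_\cX(\La_1, \La_2) > R$, the vanishing of $h^{\La_1, \La_2}_f$ combined with the uniqueness of the decomposition yields $f_{\La'''} \equiv 0$ for every $\La''' \subset \La_1 \cup \La_2$ having non-empty intersection with both $\La_1$ and $\La_2$. For each $\La'' \in \mathcal{S}$, decompose $\La'' = A \sqcup C \sqcup D$ with $A = \La'' \cap \La$, $C = \La'' \cap \sB(x, 0)$, and $D = \La'' \cap \sB^*(x, R)$; by assumption $A$ and $C$ are non-empty, while $D$ may be empty.

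When $D = \emptyset$: the hypothesis $d_\cS(x, \La) > R$ together with $d_\cS \leq d_\cX$ (a path in $\cX$ projects to a walk in $\Z_\cS$ of the same length) gives $A \subset \La \subset X \setminus B(\sB(x, 0), R)$. When $\dim \cX \geq 2$, this complement is a connected infinite subset of $\cX$, so one can find a finite connected $\La_1 \subset X \setminus B(\sB(x, 0), R)$ containing $A$. Then $d_\cX(\La_1, \sB(x, 0)) > R$, and applying the observation of the previous paragraph with $\La_2 = \sB(x, 0)$ yields $f_{\La''} \equiv 0$. When $\dim \cX = 1$, the complement has two connected components and an analogous argument is carried out for each component separately.

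The principal obstacle is the case $D \neq \emptyset$: any connected superset of $A \cup D$ necessarily contains points of $\sB^*(x, R)$ that may sit at $d_\cX$-distance only $\geq 1$ from $\sB(x, 0)$, so \cref{prop: pairing} cannot be applied directly to decouple $A \cup D$ from $\sB(x, 0)$. My plan for this case is induction on $|D|$: fixing a point $b \in D$ and applying \cref{lem: dependence} with $Y = \{b\}$ to isolate the dependence of $f$ on the single state $\e_b$ through the conserved quantities, then invoking the cocycle identity \cref{prop: cocycle} together with the inductive hypothesis, one expresses $f_{\La''}$ in terms of coefficients $f_{\La'''}$ whose intersection with $\sB^*(x, R)$ is strictly smaller than $|D|$. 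The induction terminates at the base case $D = \emptyset$ already established, and this inductive reduction is the most delicate step of the argument.
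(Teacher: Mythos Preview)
Your reduction to showing $f_{\La''}\equiv 0$ for each $\La''\in\cS$ is correct as a reformulation, and the $D=\emptyset$ case is essentially right (you should work with the block ball $\sB(x,R)$ rather than the graph ball $B(\sB(x,0),R)$, since the paper only establishes connectedness of complements of block balls, and since $h_f\equiv0$ gives $h_f^{\La_1,\La_2}=0$ only after you have connected $(\La_1,\La_2)$ to a pair of block balls via \cref{lem: independence}, which requires block-distance separation).

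The genuine gap is the case $D\neq\emptyset$.  Your inductive step is not an argument: \cref{lem: dependence} with $Y=\{b\}$ tells you that $(f-\iota^{B(b,R)}f)(\e)$ depends only on $\bsxi_{\{b\}}(\e)$ and $\e|_{X\setminus\{b\}}$, which is a constraint on \emph{values} of $f$, not a relation among the individual coefficients $f_{\La''}$.  I do not see how this yields an expression of $f_{A\cup C\cup D}$ in terms of coefficients with strictly smaller $D$-part; unwinding it in the canonical expansion produces only the vanishing of a large sum $\sum_{\La''\ni b,\,\La''\not\subset B(b,R)}(f_{\La''}(\e)-f_{\La''}(\e'))$, from which one cannot isolate a single $f_{\La''}$.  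The cocycle identity you invoke is vacuous here since $h_f\equiv0$.  The underlying difficulty is exactly the one you flag: a connected set containing $A\cup D$ comes within graph distance $1$ of $\sB(x,0)$, so \cref{prop: pairing} is unavailable, and nothing in your toolkit replaces it.

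The paper avoids this entirely by \emph{not} passing to coefficients.  It works directly with function values: given $\e$ supported on $\La\cup\sB(x,R)$, it uses \cref{lem: dependence} with $Y$ an entire connected component of $X\setminus\sB(x,R)$ (not a single point) to transport the part of $\e$ on $\La$ to a far-away block ball $D_1$, keeping $\e|_{\sB(x,R)}$ fixed; then $h_f\equiv0$ splits $f$ on the resulting configuration into contributions from $D_1$ and from $\sB(x,R)$, which after cancellation gives the identity.  This sidesteps the $D\neq\emptyset$ obstruction because one never needs to separate $\sB^*(x,R)$ from $\sB(x,0)$.
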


\begin{proof}
	Suppose $\e=\e|_{\La\cup\sB(x,R)}$.
	If we let $Y\coloneqq X\setminus\sB(x,R)$, then $Y$ is connected if $\dim\cX>1$, 
	and $Y=Y_1\cup Y_2$ for two infinite connected components $Y_1,Y_2$ if $\dim\cX=1$.
	Let $\La_i=Y_i\cap \La$ for $i=1,2$ so that $\La=\La_1\cup\La_2$, with the convention $Y_2=\La_2=\emptyset$
	if $\dim\cX>1$.  
	For $\sD\coloneqq\bigcup_{k\in\N}\sD^k$,
	let $D_1'\in\sD$ be such that $\sB(\La,R)\cup\sB(x,R)\subset D_1'$, and
	let $D_1\in\sD$ such that 
	\begin{align*}
		d_\cS(\La,D_1)&>R,& 
		D_1&\subset(X\setminus\sB(D'_1,R))\cap Y_1,&
		\abs{D_1}&\geq\abs{\La_1}.  
	\end{align*}	
	Such $D_1$ exists since $Y_1$ is an infinite set.
	Choose $\e'\in S^X_*$ such that $\e'$ coincides with $\e$ outside $\La_1\cup D_1\subset Y_1$,
	is at base state on $\La_1$, and $\bsxi_{\!D_1}(\e')=\bsxi_{\!\La_1}(\e)$.
	This implies that $\bsxi_{\!Y_1}(\e')=\bsxi_{\!Y_1}(\e)$.
	Hence by \cref{lem: dependence} applied to $Y_1$ and $W=X\setminus\{x\}$,
	we have
	\[
		f(\e)-f(\e|_W)=f(\e')-f(\e'|_W).
	\]
	Since $\e=\e|_{D_1'}$ and $\e'=\e'|_{D_1\cup D_1'}$, we see that
	\begin{equation}\label{eq: result}
		f(\e|_{D_1'})-f(\e|_{D_1'\setminus\sB(x,0)})=f(\e'|_{D_1\cup D_1'})-f(\e'|_{D_1\cup (D_1'\setminus\sB(x,0)}).
	\end{equation}
	By construction, $d_\cS(D_1,D'_1)>R$, hence the calculation of $h_f$ via the pairs $(D_1,D'_1)\in\sB_R$ 
	and $(D_1,D_1'\setminus\sB(x,0))$ as well as our assumption that $h_f\equiv 0$ gives
	\begin{align*}
		f(\e'|_{D_1\cup D_1'})&=f(\e'|_{D_1})+f(\e'|_{D_1'}),&
		f(\e'|_{D_1\cup(D_1'\setminus\sB(x,0))})&=f(\e'|_{D_1})+f(\e'|_{D_1'\setminus\sB(x,0)}).
	\end{align*}
	Hence \eqref{eq: result} gives
	\[
		f(\e|_{D_1'})-f(\e|_{D_1'\setminus\sB(x,0)})=f(\e'|_{D_1'})-f(\e'|_{D_1'\setminus\sB(x,0)}).
	\]
	In particular, since  $\e=\e|_{\La\cup\sB(x,R)}$ and $\e'=\e'|_{D_1\cup\La_2\cup\sB(x,R)}$,
	we have
	\[
		f(\e|_{\La\cup\sB(x,R)})-f(\e|_{\La\cup\sB^*(x,R)})=
		f(\e'|_{\La_2\cup\sB(x,R)})-f(\e'|_{\La_2\cup\sB^*(x,R)}).
	\]
	Since $\e'|_{X\setminus Y_1}=\e|_{X\setminus Y_1}$, this gives
	\[
		f(\e|_{\La\cup\sB(x,R)})-f(\e|_{\La\cup\sB^*(x,R)})
		=f(\e|_{\La_2\cup\sB(x,R)})-f(\e|_{\La_2\cup\sB^*(x,R)}).
	\]
	This gives our assertion for $\dim\cX>1$, since $\La_2=\emptyset$ in this case.
	The proof for the case $\dim\cX=1$ follows by applying the same argument as that of $\La_1$
	to $\La_2$.
\end{proof}

We may now prove \cref{prop: criterion}.

\begin{proof}[Proof of \cref{prop: criterion}]
	Suppose $f\in C(S^X_*)$ such that $\partial_{\cX}f\in C^1_R(S^X)$ for some $R>0$,
	and assume that $h_f\equiv0$ for the paring of \cref{def: pairing}.
	By \cref{lem: criterion}, it is sufficient to prove that for any $\La\in\sI_X$ and $x\in\La$,
	we have
	\[
		\iota^\La f-\iota^{\La\setminus\sB(x,0)}f=\iota^{\La\cap\sB(x,R)} f-\iota^{\La\cap\sB^*(x,R)}f.
	\]
	Let $\La'\coloneqq\La\setminus(\La\cap\sB(x,R))$. 
	By \cref{prop: wow} applied to $\La'$, we have
	\begin{equation}\label{eq: apply}
		\iota^{\La'\cup\sB(x,R)} f-\iota^{\La'\cup\sB^*(x,0)}f
		=\iota^{\sB(x,R)} f-\iota^{\sB^*(x,R)}f.
	\end{equation}
	By applying $\iota^{\La'\cup(\La\cap\sB(x,R))}$ to both sides of \eqref{eq: apply},
	we have
	\[
		\iota^{\La'\cup(\La\cap\sB(x,R))} f-\iota^{\La'\cup(\La\cap\sB^*(x,0))}f
		=\iota^{\La\cap\sB(x,R)} f-\iota^{\La\cap\sB^*(x,R)}f.
	\]
	Since $\La=\La'\cup(\La\cap\sB(x,R))$,
	this coincides with the desired equality.
\end{proof}

%
\subsection{Examples of Essentially Euclidean Crystal Lattices}\label{subsec: EE}
%

In this subsection, we give examples of essentially Euclidean crystal lattices.
In particular, we will give a criterion for a crystal lattice to be essentially Euclidean,
and show that the maximal abelian covering of a crystal is an essentially Euclidean lattice.
The results of this subsection are not necessary for the proof of our main theorem.

In what follows, for any $\La\in\sI_X$, we let $E_\La\coloneqq\{e\in E\mid o(e),t(e)\in\La\}$ and
$\partial E_{\La}\coloneqq\{ e\in E\mid o(e)\in\La,t(e)\not\in\La\}$.

\begin{lemma}\label{lem: maximal}
	Suppose $\cX=(X,E)$ is a crystal lattice for the action of a finitely generated free group $G$.
	Let $d\coloneqq\dim\cX$ be the rank of $G$.  For $\cX/G=(X_0,E_0)$, if
	\begin{equation}\label{eq: x0}
		d=1-\abs{X_0}+\frac{1}{2}\abs{E_0},
	\end{equation}
	then $\cX$ is essentially Euclidean.
\end{lemma}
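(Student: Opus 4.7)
The plan is to build a coordinate with the required property directly from a spanning tree of the seed crystal $\cX_0$. The hypothesis expresses $d$ as the cyclomatic number $\frac{1}{2}\abs{E_0}-\abs{X_0}+1$ of $\cX_0$, which is the first Betti number of $\cX_0$; by the discussion in \S\ref{sec: review} this forces $\cX$ to be isomorphic to the maximal abelian covering of $\cX_0$, since the kernel of $\pi_1^\ab(\cX_0)\to G$ would be a torsion subgroup of a free abelian group and hence trivial. We do not really need this identification, only the numerical coincidence.

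Fix a spanning tree $T\subset\cX_0$; it contains $\abs{X_0}-1$ edges, leaving $\frac{1}{2}\abs{E_0}-(\abs{X_0}-1)=d$ unordered non-tree edge pairs. Choose representatives $e_1,\ldots,e_d$. Since $\cX\to\cX_0$ is a covering and $T$ is a tree, $T$ lifts, after fixing a lift $\tilde v_0$ of a base vertex $v_0\in X_0$, to a connected subgraph $\tilde T\subset\cX$ whose vertex set meets each $G$-orbit on $X$ exactly once. Take $F=G$ and let $\La_F$ be the vertex set of $\tilde T$; it is finite, connected and a fundamental domain. For each $i$, the lift of $e_i$ originating at the unique vertex of $\La_F$ over $o(e_i)$ terminates in some translate $\sigma_i(\La_F)$, defining $\sigma_i\in G$.

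The key step is to show that $\cS^+\coloneqq\{\sigma_1,\ldots,\sigma_d\}$ is a free basis of $G$. Any $\tau\in G$ is realized by a path in $\cX$ from $\tilde v_0$ to $\tau(\tilde v_0)$; projecting to $\cX_0$ and rewriting the resulting closed walk modulo the arcs of $T$ decomposes it into a concatenation of the edges $e_i$ and their inverses. Lifting the concatenation back and tracking which translate of $\La_F$ is visited after each crossing expresses $\tau$ as a $\Z$-linear combination of the $\sigma_i$. Hence the $\sigma_i$ generate $G$; since $G\cong\Z^d$ is free abelian of rank $d$ and we have exactly $d$ generators, they form a free basis.

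It remains to compare $\Z_\cS$ with the Euclidean lattice. Every $e\in E$ projects either to an edge of $T$, in which case its lift lies inside a single translate $\tau(\tilde T)$ and contributes only a self-loop $(\bar x,\bar x)$ to $\bbE_\cS$; or to some $e_i$ or $\bar e_i$, in which case the relevant lift goes from block $\tau$ to block $\tau\sigma_i^{\pm 1}$, contributing the Euclidean edge in direction $\pm\sigma_i$. Therefore $\bbE_\cS$ contains every standard Euclidean edge of $\bbE^d$ and possibly some loops, so $d_{\Z_\cS}=d_{\Z^d}$ and $\cX$ is essentially Euclidean. The main obstacle is the identification of $\cS^+$ as a free basis, which reduces to the standard graph-theoretic fact that the non-tree edges freely generate the fundamental group of $\cX_0$ after contracting $T$.
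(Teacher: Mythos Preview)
Your argument is correct and takes a genuinely different route from the paper's. The paper starts from an \emph{arbitrary} connected fundamental domain $\La_0$ and uses a counting squeeze: connectedness forces $\abs{E_{\La_0}}\geq 2(\abs{\La_0}-1)$, so the hypothesis \eqref{eq: x0} gives $\abs{\partial E_{\La_0}}\leq 2d$; then the set $\cS=\{\sigma\in G\mid\exists e\in\partial E_{\La_0},\,t(e)\in\sigma(\La_0)\}$ is shown to generate $G$ by the same path argument you use, and closure under inversion forces $\abs{\cS}\geq 2d$. The surjection $\partial E_{\La_0}\to\cS$ then squeezes both cardinalities to $2d$, making the map a bijection and $\cS^+$ a free basis. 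Your approach instead \emph{constructs} the fundamental domain as the lift of a spanning tree $T\subset\cX_0$, so that the internal edge count is automatically minimal and the $d$ non-tree edges directly furnish the generators $\sigma_i$. What the paper's argument buys is the mildly stronger observation that \emph{every} connected fundamental domain is already a tree with exactly $2d$ outgoing edges under the hypothesis; what yours buys is a cleaner, more topological picture that makes the identification of $\bbE_\cS$ with $\bbE^d$ (up to loops) immediate, and makes transparent why the numerical hypothesis is exactly the condition $\cX\cong\cX_0^{\ab}$.
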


\begin{proof}
	Let $\La_0$ be a fundamental domain of $\cX$ for the action of $G$.
	In order for the graph $\La_0$ consisting of $\abs{\La_0}$ vertices
	to be connected in the sense of a symmetric
	graphs, one needs at least $\abs{\La_0}-1$ edges to connect
	distinct vertices.  Considering symmetry, this would require $2(\abs{\La_0}-1)$
	edges.
	This implies that $\abs{E_{\La_0}}\geq 2(|\La_0|-1)$, 
	which shows that 
	\[
		1-\abs{\La_0}+\frac{1}{2}\abs{E_{\La_0}}\geq 0.
	\]
	Moreover, we have $\abs{X_0}=\abs{\La_0}$ and $\abs{E_0}=\abs{E_{\La_0}}+\abs{\partial E_{\La_0}}$,
	hence we see from \eqref{eq: x0} that
	\[
		\abs{\partial E_{\La_0}}=2d-2\Bigl(1-\abs{X_0}+\frac{1}{2}\abs{E_{\La_0}}\Bigr)\leq 2d.
	\]
	Let $\cS=\{\sigma\in G\mid \exists e\in \partial E_{\La_0}, t(e)\in\sigma(\La_0)\}$.	
	We show that $\cS$ generates $G$.
	Fix $x_0\in\La_0$.  For any $\tau\in G$, since $X$ is connected, there exists a path
	$\vec\gamma=(e^1,\ldots,e^N)$ from $x_0$ to $\tau(x_0)$.  We let $\tau_0=\id$,
	$\tau_i\in G$ be such that $t(e^i)\in\tau_i(\La_0)$ for $i=1,\ldots,N$,
	and $\sigma_{e^i}\in G$ be such that $t(e^i)=\sigma_{e^i}\tau_{i-1}(\La_0)$.
	Then we have $\sigma_{e^i}=\id$ or $\sigma_{e^i}\in\cS$.
	Then
	\[
		\tau_N=\sigma_{e^N}\tau_{N-1}=\sigma_{e^N}\sigma_{e^{N-1}}\tau_{N-2}=\cdots=\sigma_{e^N}\cdots\sigma_{e^1},
	\]
	which shows that $\tau=\tau_N$ is in the subgroup generated by $\cS$.
	Note that if $\sigma\in\cS$, then $\sigma^{-1}\in\cS$. This shows that $\abs{\cS}\geq 2d$
	since $d$ is the rank of $G$.

	For any $e\in\partial E_{\La_0}$, let $\sigma_e\in\cS$ be such that $t(e)\in\sigma_e(\La_0)$.
	Then $e\mapsto\sigma_e$ gives a map $\partial E_{\La_0}\rightarrow\cS$
	which is surjective from the definition of $\cS$.
	Since $\abs{\partial E_{\La_0}}\leq 2d$, we must have $\abs{\cS}=\abs{\partial E_{\La_0}}=2d$.
	If we fix any $\cS^+\subset\cS$ such that either $\sigma\in\cS^+$ or $\sigma^{-1}\in\cS^+$
	for any $\sigma\in\cS$, then $\cS^+$ is a free generator of $G$, and 
	$(\La_0,\cS^+)$ gives a coordinate of $\cX$ which satisfies \eqref{eq: EEC}.
\end{proof}

\begin{lemma}\label{lem: maximal2}
	Let $\cX_0=(X_0,E_0)$ be a connected crystal, in other words,
	a strictly symmetric finite connected multi-graph.
	Then the maximal abelian covering $\cX^\ab$ of $\cX_0$ 
	has a free action of the finitely generated abelian group
	$G=\pi_1^\ab(\cX_0)$, where 
	$\pi_1^\ab(\cX_0)$
	is the maximal abelian quotient of the fundamental group $\pi_1(\cX_0)$
	of $\cX_0$.
	Then $\cX^\ab$ for the action of $G$
	satisfies the conditions of \cref{lem: maximal},
	hence is a crystal lattice which is essentially Euclidean.
\end{lemma}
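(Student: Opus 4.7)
The plan is to verify that the rank $d$ of $G = \pi_1^\ab(\cX_0)$ satisfies $d = 1 - |X_0| + \frac{1}{2}|E_0|$, after which \cref{lem: maximal} applies directly to give that $\cX^\ab$ is essentially Euclidean. Freeness of the $G$-action on $\cX^\ab$ and the identification $\cX^\ab/G \cong \cX_0$ are already standard properties of the maximal abelian covering (invoked in the earlier example of crystal lattices), so no additional work is needed on those points.

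To compute $d$, I would invoke the classical computation of the fundamental group of a finite graph. Viewing the strictly symmetric multi-graph $\cX_0 = (X_0, E_0)$ as a finite $1$-dimensional CW complex with $|X_0|$ zero-cells and one $1$-cell per inverse pair $\{e, \bar e\} \subset E_0$, strict symmetry (ensuring $\bar e \neq e$) makes each such pair have exactly two elements, so the number of $1$-cells equals $\frac{1}{2}|E_0|$. Choosing a spanning tree $T$ of the underlying undirected graph, which has $|X_0| - 1$ edges, and contracting it exhibits $\cX_0$ up to homotopy as a wedge of $\frac{1}{2}|E_0| - (|X_0| - 1)$ circles. Hence $\pi_1(\cX_0)$ is free of rank $\frac{1}{2}|E_0| - |X_0| + 1$, and abelianizing gives
\[
  G = \pi_1^\ab(\cX_0) \cong \Z^d, \qquad d = 1 - |X_0| + \tfrac{1}{2}|E_0|.
\]

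This identity is exactly the hypothesis of \cref{lem: maximal}, whose conclusion then yields that $\cX^\ab$ is a crystal lattice which is essentially Euclidean. The only potential difficulty is the notational translation between the paper's directed-edge convention with involution $e \mapsto \bar e$ and the standard undirected-edge count used in the topological computation; strict symmetry makes this translation immediate, and no genuine obstacle arises beyond quoting the classical theory of fundamental groups of graphs.
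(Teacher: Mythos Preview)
Your proposal is correct and follows essentially the same approach as the paper: both arguments reduce to showing that $\pi_1^\ab(\cX_0)$ is free abelian of rank $1-|X_0|+\tfrac{1}{2}|E_0|$ and then invoke \cref{lem: maximal}. The only cosmetic difference is that the paper phrases this via the Hurewicz isomorphism $\pi_1^\ab(\cX_0)\cong H_1(\cX_0,\Z)$ and the standard rank formula for the first homology of a finite graph, whereas you compute $\pi_1(\cX_0)$ directly by contracting a spanning tree; these are two textbook formulations of the same classical fact, and your explicit remark that strict symmetry is what makes the undirected edge count equal $\tfrac{1}{2}|E_0|$ is a useful clarification.
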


\begin{proof}
	Since $\cX_0$ is connected,
	the homology group $H_1(\cX_0,\Z)$ of the crystal $\cX_0$ is a free abelian group
	of rank
	\[
		d= 1-\abs{X_0}+\frac{1}{2}\abs{E_0}.
	\]
	The standard isomorphism $\pi^\ab_1(\cX_0)\cong H_1(\cX_0,\Z)$ shows that $\pi^\ab_1(\cX)$
	is a free abelian group of rank $d$.  Hence $\cX^\ab=(X^\ab,E^\ab)$ is a connected locally finite multi-graph
	with a free action of the finitely generated abelian group $G\coloneqq\pi_1^\ab(\cX_0)$, such that
	$\cX^\ab/G=\cX_0$.  This shows in particular that $\cX^\ab$ satisfies the conditions of \cref{lem: maximal}.
\end{proof}

\begin{example}\label{example: more}
	\begin{enumerate}
		\item The \emph{hexagonal lattice} 
		$\cX$ gives an example of a crystal lattice which is essentially Euclidean.
		It has a free action of $G=\Z^2$ given by translation, 
		and the quotient $\cX/G=(X_0,E_0,o,t,\iota)$ is given by $X_0=\{x_0,x_1\}$,
		$E_0=\{e_0,\bar e_0,e_1,\bar e_1,e_2,\bar e_2\}$, $o(e_j)=x_0$, 
		$t(e_j)=x_1$, and $\iota(e_j)=\bar e_j$,  $\iota(\bar e_j)=e_j$ for $j=1,2,3$.
		In fact, $\cX$ is the maximal abelian covering of $\cX/G$.
		\item The \emph{triangular lattice} $\cX$ gives an example of a crystal lattice 
		which is \emph{not} essentially Euclidean. $\cX$ has an action of $G=\Z^2$ by
		translation.  For any coordinate $(\La_F,\cS^+)$, the graph $\Z_\cS=(\Z^2,\bbE_\cS)$
		contains extra diagonal edges, hence the graph distance $d_\cS$ does not coincide with 
		$d_{\Z^2}$.
\begin{figure}[h]
	\centering
	\includegraphics[width=15cm]{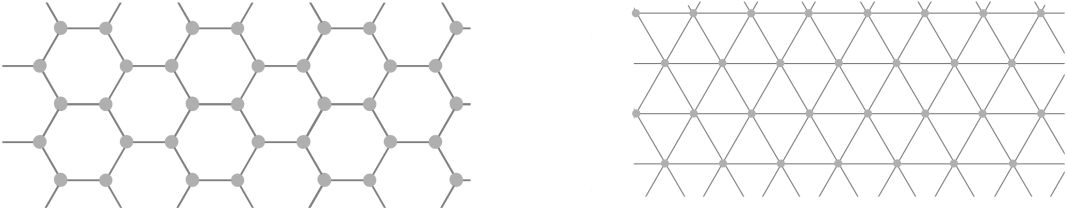}
	\caption{The Hexagonal and Triangular Lattices}
\end{figure}
		\item For integers $d\geq 1$ and $n>0$, 
		the \textit{Euclidean lattice with nearest $n$-neighbor} 
	$\bbZ^d_n=(\bbZ^d,\bbE_n)$, where
	\[
		\bbE_n\coloneqq \bigl\{  (x,y) \in \bbZ^d\times\bbZ^d \,\,\big|\,\,  0<|x-y|\leq n\bigr\},
	\]
	is a crystal lattice for the action of the group $G=\Z^d$ by translation.
	\begin{figure}[h]
			\centering
			\includegraphics[width=14cm]{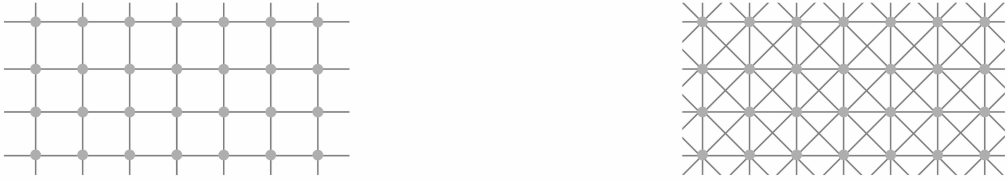}
			\caption{The Euclidean Lattices $\bbZ^2$ and with nearest $2$-neighbor $\bbZ^2_2$}	
			\label{fig: 6}
	\end{figure}
	The lattice $\bbZ^d_n$ for $d>1$ and $n>1$ give examples of crystal lattices which
	are \emph{not} essentially Euclidean.
	\end{enumerate}
\end{example}

%
%
%
\section{Decomposition for Closed Uniform Forms}\label{sec: main}
%
%

%

In this section, we prove our main result, \cref{thm: main},
which gives the decomposition of shift-invariant 
closed uniform forms on configuration spaces with transition structure
over general crystal lattices.  \cref{thm: main} is obtained from \cref{thm: 1}
concerning the surjectivity of the differential from the space of uniform functions
to closed uniform forms.  \cref{thm: 1} is proved by reducing to the essentially Euclidean case,
\cref{thm: 0}.

%
\subsection{The Main Theorem}\label{subsec: main}
%

In this subsection, we will state our main theorem.
Let $\cX=(X,E)$ be a crystal lattice of dimension $\dim\cX$
for the action of an abelian group $G$.
We let $(S,\phi)$ be an interaction which is irreducibly quantified.
The action of $G$ on $\cX$ induces actions on the
spaces $C^0_\unif(S^X)$, $C^1_\unif(S^X)$, $Z^1_\unif(S^X)$ 
compatible with the differential \eqref{eq: differential}.
We denote by  $C^0_\unif(S^X)^G$, $C^1_\unif(S^X)^G$, $Z^1_\unif(S^X)^G$
the $G$-invariant subspaces of $C^0_\unif(S^X)$, $C^1_\unif(S^X)$, $Z^1_\unif(S^X)$
consisting of functions or forms invariant with respect to the action of $G$.

Again let  $d=\dim\cX$.  We fix a generator $\sigma_1,\ldots,\sigma_d$ of the 
free part $F\subset G$  which gives an isomorphism $F\cong\Z^d$.
We may extend this isomorphism to a surjective homomorphism $G\rightarrow\Z^d$
by mapping the torsion subgroup of $G$ to \emph{zero}.
For any $\tau\in G$, we let $(n_1(\tau),\ldots,n_d(\tau))\in\Z^d$ be the image of $\tau$
with respect to the above surjection.
We fix a fundamental domain $\La_0$ of $X$ for the action of $G$.

\begin{lemma}\label{lem: tau}
	For any conserved quantity $\xi\in\C{S}$ and $j=1,\ldots,d$, let
	\[
		\frA^j_\xi\coloneqq\sum_{\tau\in G}n_j(\tau)\xi_{\tau(\La_0)}\in C^0_\unif(S^X).
	\]
	Then for any $j,k=1,\ldots,d$, the functions $\frA^j_\xi$ satisfies 
	\[
		(1-\sigma_k)\frA^j_\xi=
		\begin{cases}
			\xi_X  &j=k,\\
			0 & j\neq k.
		\end{cases}
	\]
	In particular, we have $\partial_{\cX}\frA^j_\xi \in Z^1_\unif(S^{X})^G$.
\end{lemma}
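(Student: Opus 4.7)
The plan is straightforward: verify that $\frA^j_\xi$ is a well-defined element of $C^0_\unif(S^X)$, then compute $\sigma_k \frA^j_\xi$ by a reindexing of the defining sum, and finally upgrade $F$-invariance to $G$-invariance after applying $\partial_{\cX}$.

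For the uniformity, I would observe that since $\La_0$ is a fundamental domain for the $G$-action, the collection $\{\tau(\La_0)\}_{\tau\in G}$ partitions $X$. Unfolding the definition gives
\[
	\frA^j_\xi = \sum_{x\in X} n_j(\tau_x)\,\xi_x,
\]
where $\tau_x\in G$ is the unique element with $x\in\tau_x(\La_0)$. Because $\xi$ is normalized ($\xi(*)=0$), each summand $n_j(\tau_x)\xi_x$ lies in $C_{\{x\}}(S^X)$, so this is an expansion with supports of diameter $0$. Hence $\frA^j_\xi\in C_\unif(S^X)$, and evaluation at the base configuration $\star$ gives $0$, so $\frA^j_\xi\in C^0_\unif(S^X)$.

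Next I would compute the action of $\sigma_k$. With the standard convention $\sigma\cdot\xi_y=\xi_{\sigma(y)}$ induced by the action of $G$ on $X$, reindexing with $\tau'=\sigma_k\tau$ yields
\[
	\sigma_k\frA^j_\xi = \sum_{\tau\in G} n_j(\tau)\,\xi_{\sigma_k\tau(\La_0)} = \sum_{\tau'\in G} n_j(\sigma_k^{-1}\tau')\,\xi_{\tau'(\La_0)}.
\]
Since $n_j\colon G\to\Z$ is a homomorphism with $n_j(\sigma_k)=\delta_{jk}$, we have $n_j(\sigma_k^{-1}\tau')=n_j(\tau')-\delta_{jk}$, and combining this with the partition identity $\sum_{\tau'}\xi_{\tau'(\La_0)}=\xi_X$ gives $\sigma_k\frA^j_\xi = \frA^j_\xi - \delta_{jk}\xi_X$. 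The claimed formula for $(1-\sigma_k)\frA^j_\xi$ follows at once.

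For the last claim, applying $\partial_{\cX}$ to the previous identity and using $\partial_{\cX}\xi_X=0$ (since $\xi$ is a conserved quantity) shows that $\partial_{\cX}\frA^j_\xi$ is invariant under each $\sigma_k$, hence under the free subgroup $F$. It remains to handle the torsion part of $G$: for any torsion element $\tau_0\in G$ one has $n_j(\tau_0)=0$ by construction of the surjection $G\twoheadrightarrow\Z^d$, and the same reindexing $\tau'=\tau_0\tau$ gives $\tau_0\frA^j_\xi=\frA^j_\xi$ already at the level of functions. Combining both cases, $\partial_{\cX}\frA^j_\xi$ is invariant under all generators of $G$, hence lies in $Z^1_\unif(S^X)^G$. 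The only point requiring care is the bookkeeping of the $G$-action convention, but once $\sigma\cdot\xi_y=\xi_{\sigma(y)}$ is fixed the computation is entirely mechanical; there is no real obstacle.
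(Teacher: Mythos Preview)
Your proof is correct and follows essentially the same route as the paper: reindex the sum defining $\frA^j_\xi$ using the homomorphism property of $n_j$, then use $\partial_{\cX}\xi_X=0$ to conclude invariance. The only cosmetic difference is that the paper carries out the reindexing for an arbitrary $\sigma\in G$ in one stroke, obtaining $(1-\sigma)\frA^j_\xi=n_j(\sigma)\xi_X$, which simultaneously yields the displayed formula (take $\sigma=\sigma_k$) and full $G$-invariance of $\partial_{\cX}\frA^j_\xi$; you instead split into the free generators $\sigma_k$ and torsion elements separately, which is equally valid but slightly less economical.
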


\begin{proof}
	By \cref{def: UL}, $\frA^j_\xi=\sum_{\tau\in G}
	n_j(\tau)\sum_{x\in\tau(\La_0)}\xi_x$ is a uniform function,
	hence
	\[
		\partial_{\cX}\frA^j_\xi\in Z^1_\unif(S^X).
	\] 
	For any $\sigma\in G$, we have
	\begin{align*}
		(1-\sigma)\frA^j_\xi& =\sum_{\tau\in G}n_j(\tau)\xi_{\tau(\La_0)}-\sum_{\tau\in G}n_j(\tau)\xi_{\sigma\tau(\La_0)}
		=\sum_{\tau\in G}n_j(\tau)\xi_{\tau(\La_0)}-\sum_{\tau\in G}n_j(\sigma^{-1}\tau)\xi_{\tau(\La_0)}\\
		&=\sum_{\tau\in G}n_j(\tau)\xi_{\tau(\La_0)}-\sum_{\tau\in G}(n_j(\tau)-n_j(\sigma))\xi_{\tau(\La_0)}
		=n_j(\sigma)\sum_{\tau\in G}\xi_{\tau(\La_0)}=n_j(\sigma)\xi_X
	\end{align*}
	for $j=1,\ldots, d$.  Since $\partial_{\cX}\xi_X=0$ by \cite{BKS20}*{Lemma 2.20} (see also \cref{thm: 0}), 
	the compatibility of the group action with respect
	to the differential gives $(1-\sigma)\partial_{\cX}\frA^j_\xi=\partial_{\cX}\xi_X=0$, which shows that $\partial_{\cX}\frA^j_\xi \in Z^1_\unif(S^{X})^G$
	as desired.
\end{proof}

Denote by 
\[
	\cV\coloneqq\Span_\R\{  \frA^j_{\xi}\mid \xi\in\C{S}, j=1,\ldots, d \}
\] 
the $\R$-linear subspace
of $C^0_\unif(S^{X})$ spanned by $\frA^j_{\xi}$ for $\xi\in\C{S}$ and $j=1,\ldots, d$.
Note that the space $\cV$ depends on the choice of the fundamental domain $\La_0$.
We let $\cC$ and $\cE$ be the spaces of shift-invariant closed and exact uniform forms
given by 
\begin{align*}
	\cC&\coloneqq Z^1_\unif(S^{X})^G,  &
	\cE&\coloneqq\partial_{\cX}(C^0_\unif(S^{X})^G).
\end{align*}

\begin{lemma}\label{lem: cd}
	Let $\partial_{\cX}\cV$ be the $\R$-linear space 
	\[
		\partial_{\cX}\cV\coloneqq\Span_{\R}\{\partial_{\cX}\frA^j_{\xi}\mid \xi\in\C{S}, j=1,\ldots,d\}.
	\] 
	If we map $(\zeta^{(1)},\ldots,\zeta^{(d)})\in\bigoplus_{j=1}^d\C{S}$
	to $\frA\coloneqq\sum_{j=1}^d\partial_{\cX}\frA^j_{\zeta^{(j)}}$, then we have an $\R$-linear isomorphism
	\[
		\bigoplus_{j=1}^d\C{S}\cong\partial_{\cX}\cV.
	\]
	In particular, we have
	$\dim_\R\partial_{\cX}\cV=c_\phi d$
	for $c_\phi=\dim_\R\C{S}$.
\end{lemma}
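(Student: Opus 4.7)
The plan is to show linearity and surjectivity almost for free, and then to derive injectivity by combining \cref{lem: tau} with \cref{thm: old}. The map
\[
	\Psi\colon \bigoplus_{j=1}^d\C{S}\longrightarrow\partial_{\cX}\cV,\qquad
	(\zeta^{(1)},\ldots,\zeta^{(d)})\longmapsto\sum_{j=1}^d\partial_{\cX}\frA^j_{\zeta^{(j)}}
\]
is $\R$-linear by the linearity of $\xi\mapsto\frA^j_\xi$ and of $\partial_{\cX}$, and is surjective onto $\partial_{\cX}\cV$ directly from the definition of $\partial_{\cX}\cV$ as the $\R$-span of the $\partial_{\cX}\frA^j_\xi$.

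The heart of the argument is the injectivity of $\Psi$. Suppose that $\Psi(\zeta^{(1)},\ldots,\zeta^{(d)})=0$, so that $f\coloneqq\sum_{j=1}^d\frA^j_{\zeta^{(j)}}\in C^0_\unif(S^X)$ lies in $\Ker\partial_{\cX}$. By \cref{thm: old} applied to the locale $\cX$, which is valid since the interaction is irreducibly quantified, there exists $\xi\in\C{S}$ with $f=\xi_X$. In particular, $f$ is $G$-invariant, so $(1-\sigma_k)f=0$ for every $k=1,\ldots,d$. On the other hand, \cref{lem: tau} gives $(1-\sigma_k)\frA^j_{\zeta^{(j)}}=\zeta^{(j)}_X$ if $j=k$ and $0$ otherwise, so
\[
	0=(1-\sigma_k)f=\zeta^{(k)}_X
\]
for each $k$. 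Since the homomorphism $\C{S}\hookrightarrow C^0_\unif(S^X)$, $\xi\mapsto\xi_X$ is injective, this forces $\zeta^{(k)}=0$ for all $k$, which yields injectivity.

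Finally, combining the $\R$-linear isomorphism $\bigoplus_{j=1}^d\C{S}\cong\partial_{\cX}\cV$ with $\dim_\R\C{S}=c_\phi$ gives $\dim_\R\partial_{\cX}\cV=c_\phi d$. The only nontrivial step is the injectivity, and among those substeps the one that really does work is the appeal to \cref{thm: old} to identify $\Ker\partial_{\cX}$ with $\C{S}$; without that identification there would be no reason for $f$ to be $G$-invariant and the argument via $(1-\sigma_k)$ would not get off the ground.
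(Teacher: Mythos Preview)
Your proof is correct and follows essentially the same approach as the paper: linearity and surjectivity are immediate from the definitions, and injectivity is obtained by applying \cref{thm: old} to identify $f=\sum_j\frA^j_{\zeta^{(j)}}\in\Ker\partial_{\cX}$ with some $\xi_X$, using its $G$-invariance together with \cref{lem: tau} to conclude $\zeta^{(k)}_X=0$, and then invoking the injectivity of $\xi\mapsto\xi_X$.
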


\begin{proof}
	The map is $\R$-linear and surjective by the definition of $\frA^j_\xi$ and $\partial_\cX\cV$.
	It is sufficient to prove that the map is injective.
	Suppose 
	we have $\partial_{\cX}\frA=0$ for
	\begin{align*}
		\frA=\sum_{j=1}^d\frA^j_{\zeta^{(j)}},
		\qquad (\zeta^{(1)},\ldots,\zeta^{(d)})\in\bigoplus_{j=1}^d\C{S}.
	\end{align*}
	By \cref{thm: old}, we have $\C{S}\cong\Ker\partial_{\cX}$,
	hence $\frA=\xi_X$ for some $\xi\in\C{S}$.
	Since $\xi_X(\e)=\xi_X(\sigma(\e))$ for any $\sigma\in G$, 
	we see that $\xi_X\in C^0_\unif(S^X)^G$.  This shows that
	\[
		(1-\sigma_j)\frA=\zeta^{(j)}_X=0,\qquad j=1,\ldots,d,
	\]	
	where $\zeta^{(j)}_X$ is the image of $\zeta^{(j)}\in\C{S}$ in $C^0_\unif(S^X)$
	through the isomorphism $\C{S}\cong\Ker\partial_{\cX}$ of \cref{thm: old}.
	This proves that $\zeta^{(j)}=0$ for any $j=1,\ldots,d$, hence
	that our map is injective as desired.
\end{proof}

We say that an interaction $(S,\phi)$ is \textit{simple}, if $c_\phi=1$,
and for any nonzero conserved quantity $\xi\in\Consv^\phi(S)$,
the monoid generated by $\xi(S)$ via addition in $\bbR$ 
is isomorphic to $\bbN$ or $\bbZ$.
Our main theorem is as follows.

\begin{theorem}\label{thm: main}
	Let $\cX=(X,E)$ be a crystal lattice for a finitely generated abelian group $G$.
	Assume that the interaction $(S,\phi)$ is irreducibly quantified, and assume in addition
	that $(S,\phi)$ is simple if $\dim\cX=1$.
	Then we have a decomposition
	\[
		\cC\cong\cE\oplus\partial_{\cX}\cV.
	\]	
\end{theorem}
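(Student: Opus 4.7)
The plan is to deduce the decomposition $\cC=\cE\oplus\partial_{\cX}\cV$ from two already-available inputs: the surjectivity $\partial_{\cX}\colon C_\unif(S^X)\twoheadrightarrow Z^1_\unif(S^X)$ of \cref{thm: 1}, and the kernel identification $\C{S}\cong\Ker\partial_{\cX}$ of \cref{thm: old}. Any $\omega\in\cC$ is then exact on the uniform level, $\omega=\partial_{\cX}f$ for some $f\in C_\unif(S^X)$; the obstruction to choosing $f$ itself to be $G$-invariant is a class in $\Hom(G,\C{S})$, and the content of the theorem is that this obstruction is trivialized in a canonical way by subtracting a unique element of $\cV$, using the behavior of $\frA^j_\xi$ under the $G$-action recorded in \cref{lem: tau}.

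To prove the spanning $\cC\subset\cE+\partial_{\cX}\cV$, given $\omega\in\cC$ I would first apply \cref{thm: 1} to produce $f\in C_\unif(S^X)$ with $\partial_{\cX}f=\omega$; after subtracting the constant $f(\star)$ we may assume $f\in C^0_\unif(S^X)$. Shift-invariance of $\omega$ forces $\partial_{\cX}((\sigma-1)f)=0$ for every $\sigma\in G$, so by \cref{thm: old} there is a unique $\xi(\sigma)\in\C{S}$ with $(\sigma-1)f=\xi(\sigma)_X$. The identity $(\sigma\tau-1)f=\sigma((\tau-1)f)+(\sigma-1)f$, combined with the $G$-invariance of each $\xi_X$, shows that $\sigma\mapsto\xi(\sigma)$ is a group homomorphism $G\to\C{S}$. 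Since $\C{S}$ is an $\R$-vector space this map kills the torsion subgroup of $G$ and therefore factors through $F\cong\Z^d$; setting $\zeta^{(j)}\coloneqq\xi(\sigma_j)\in\C{S}$ we obtain $\xi(\sigma)=\sum_{j=1}^d n_j(\sigma)\zeta^{(j)}$ for all $\sigma\in G$.

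The calculation in the proof of \cref{lem: tau} in fact yields $(1-\sigma)\frA^j_\xi=n_j(\sigma)\xi_X$ for every $\sigma\in G$, not only for the generators. Defining $f'\coloneqq f+\sum_{j=1}^d\frA^j_{\zeta^{(j)}}\in C_\unif(S^X)$, one computes
\[
(\sigma-1)f'=\xi(\sigma)_X-\sum_{j=1}^d n_j(\sigma)\zeta^{(j)}_X=0
\]
for every $\sigma\in G$, so $f'\in C_\unif(S^X)^G$ and
$\omega=\partial_{\cX}f'-\sum_{j=1}^d\partial_{\cX}\frA^j_{\zeta^{(j)}}\in\cE+\partial_{\cX}\cV$.
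For the direct sum assertion $\cE\cap\partial_{\cX}\cV=0$, suppose $\partial_{\cX}g=\sum_{j=1}^d\partial_{\cX}\frA^j_{\zeta^{(j)}}$ with $g\in C^0_\unif(S^X)^G$; then \cref{thm: old} gives $g=\sum_{j=1}^d\frA^j_{\zeta^{(j)}}+\xi_X$ for some $\xi\in\C{S}$. Applying $(1-\sigma_k)$ to both sides --- the left vanishes by $G$-invariance of $g$, while the right equals $\zeta^{(k)}_X$ by \cref{lem: tau} and the $G$-invariance of $\xi_X$ --- and using the injectivity $\C{S}\hookrightarrow C^0_\unif(S^X)$ exploited in \cref{lem: cd}, one concludes $\zeta^{(k)}=0$ for each $k$.

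The genuine obstacle in the overall argument is not this formal deduction but rather the input \cref{thm: 1} itself, namely the surjectivity of the differential on uniform forms over an arbitrary crystal lattice. The paper handles that surjectivity by reducing to the essentially Euclidean case \cref{thm: 0}, where the block-coordinate geometry developed in \cref{subsec: EECL} and the pairing-based uniformity criterion \cref{prop: criterion} become available; the hypothesis that $(S,\phi)$ be simple when $\dim\cX=1$ is invoked only at that stage, and once \cref{thm: 1} is granted the decomposition follows by the cohomological argument sketched above.
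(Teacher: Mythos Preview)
Your proof is correct and follows essentially the same route as the paper: lift $\omega$ to a uniform $f$ via \cref{thm: 1}, use \cref{thm: old} to identify the obstruction $(\sigma-1)f$ as a conserved quantity, and then correct by an element of $\cV$ using \cref{lem: tau}; the direct-sum argument is likewise identical. Your treatment is in fact slightly more careful than the paper's, since you verify $G$-invariance of $f'$ for all $\sigma\in G$ (including torsion), whereas the paper checks only the free generators $\sigma_k$ and leaves the torsion case implicit.
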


\begin{remark}
	In \cite{BKS20}*{Theorem 5}, we proved the theorem when the locale $\cX$ is weakly transferable
	or transferable.  Unfortunately, a crystal lattice does not in general satisfy this condition.
	We modify the method of  \cite{BKS20} to prove Theorem \ref{thm: main}.
\end{remark}

%
\subsection{Surjectivity of the Differential}\label{subsec: EEC}
%

In this subsection, we will prove the following Theorem \ref{thm: 1} for a general crystal lattice $\cX=(X,E)$.
This theorem plays a crucial role in the proof of our main theorem.

\begin{theorem}\label{thm: 1}
	Let $\cX=(X,E)$ be a crystal lattice, and let $(S,\phi)$
	be an interaction which is irreducibly quantified.  We assume in addition that $(S,\phi)$
	is simple if $\dim\cX=1$.   Then the differential
	\[
		\partial_{\cX}\colon C_\unif(S^X)\rightarrow Z^1_\unif(S^X)
	\]
	of \eqref{eq: differential} is surjective.
\end{theorem}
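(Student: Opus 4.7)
The plan is to reduce Theorem \ref{thm: 1} to the essentially Euclidean case (Theorem \ref{thm: 0}) and then handle that case via the criterion Proposition \ref{prop: criterion}. In both stages the starting point is Lemma \ref{lem: exact}: given a closed uniform form $\omega \in Z^1_\unif(S^X)$, we obtain some $f \in C(S^X_*)$ with $\partial_\cX f = \omega$, and the task becomes to upgrade this $f$ to a uniform function by modifying it, if needed, by an element of $\Ker \partial_\cX$.

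For the essentially Euclidean case, the strategy is to arrange for the cocycle $h_f$ of Definition \ref{def: pairing} to vanish, so that Proposition \ref{prop: criterion} applies. By irreducible quantification on sectors, $\Ker \partial_\cX$ inside $C(S^X_*)$ is exactly the set of functions of the form $g \circ \bsxi_X$ with $g \colon \cM \to \R$, and a direct computation gives $h_{g \circ \bsxi_X}(\alpha, \beta) = g(\alpha + \beta) - g(\alpha) - g(\beta)$. Since $h_f$ is a symmetric $2$-cocycle on the commutative monoid $\cM$ by Proposition \ref{prop: cocycle} and equation \eqref{eq: sym}, the key substep is to produce a $g \colon \cM \to \R$ exhibiting $h_f$ as a coboundary. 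Once such $g$ is found, replacing $f$ by $f - g \circ \bsxi_X$ yields the same $\omega$ but trivial pairing, and Proposition \ref{prop: criterion} concludes uniformity.

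For the reduction from a general crystal lattice $\cX = (X, E)$ to the essentially Euclidean case, we exploit the remark after Definition \ref{def: UL} that the notion of uniformity is independent of the choice of edge set. One would select a subset $E' \subset E$ making $\cX' = (X, E')$ a connected essentially Euclidean crystal lattice — for instance by choosing a coordinate $(\La_F, \cS^+)$ and discarding the edges of $E$ whose image in the block graph $\Z_\cS$ does not belong to $\bbE^d$ — and apply Theorem \ref{thm: 0} to $\omega|_{E'}$ to obtain a uniform $f$ with $\partial_{\cX'} f = \omega|_{E'}$. To upgrade to $\partial_\cX f = \omega$ on the whole $E$, for each $e \in E \setminus E'$ one uses closedness of $\omega$ together with the irreducibly quantified hypothesis to produce a path in $S^{\cX'}_*$ from $\e$ to $\e^e$ (the endpoints have the same conserved quantities), giving $\omega_e = f(\e^e) - f(\e)$.

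The main obstacle is the cohomological step: showing that the symmetric cocycle $h_f$ is a coboundary on $\cM$. The difficulty is sharpest in dimension one, where the complement of a block has two infinite components, Proposition \ref{prop: wow} needs the simplicity hypothesis to work around the disconnection, and the monoid structure of $\cM$ is pinned down to $\N$ or $\Z$ precisely to make the coboundary realization possible. A secondary difficulty is producing the essentially Euclidean sub-crystal-lattice $\cX' \subset \cX$ and verifying that irreducible quantification and closedness transfer correctly through the sub-configuration space; this requires combinatorial care, particularly in examples like the triangular lattice where the ambient $\cX$ is not essentially Euclidean.
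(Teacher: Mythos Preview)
Your treatment of the essentially Euclidean case is essentially the paper's argument, with one correction: equation \eqref{eq: sym} only gives $h_f^{\La_1,\La_2}(\alpha,\beta)=h_f^{\La_2,\La_1}(\beta,\alpha)$, which is not yet symmetry of $h_f$ itself. To conclude $h_f(\alpha,\beta)=h_f(\beta,\alpha)$ one needs to connect $(D_1,D_2)$ to $(D_2,D_1)$ through the relation $\leftrightarrow$, and this uses that complements of block balls are connected, i.e.\ $\dim\cX>1$ (this is exactly \cref{lem: symmetric}). In dimension one symmetry can genuinely fail; the simplicity hypothesis is used not in \cref{prop: wow} (which never invokes it) but to force $\cM\cong\N$ or $\Z$, so that \cref{lem: splitting1}(ii) produces the coboundary without any symmetry assumption.

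The reduction step is where your approach diverges from the paper and where the real gap lies. The paper does \emph{not} look for a subset $E'\subset E$. Instead, \cref{lem: existence} builds a brand-new edge set $E'$ on the same vertex set $X$ (taking the complete graph on a fundamental domain $\La_F$ together with all edges to the $2d$ neighbouring translates $\sigma(\La_F)$, $\sigma\in\cS$), producing an essentially Euclidean crystal lattice $\cX'$ that is merely \emph{equivalent} to $\cX$ in the sense of \cref{def: equivalent}. The transfer is then accomplished abstractly by \cref{lem: equivalent}: for equivalent locales one has canonical isomorphisms $C^0_\unif(S^X)\cong C^0_\unif(S^{X'})$ and $Z^1_\unif(S^X)\cong Z^1_\unif(S^{X'})$ compatible with the differentials (the latter isomorphism going through $Z^1(S^X_*)\cong C(S^X_*)/\Ker\partial_\cX$ and the observation that $\Ker\partial_\cX=\Ker\partial_{\cX'}$ since both equal the functions constant on fibres of $\bsxi_X$ by irreducible quantification). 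Your restrict--solve--extend scheme would require $E'\subset E$, and your proposed construction (discard edges whose block image lies outside $\bbE^d$) can disconnect the graph: for instance on $X=\bbZ^2$ with $G=\bbZ^2$ and edge-types $(\pm1,0)$, $(1,1)$, $(-1,-1)$, discarding the diagonal edges leaves only horizontal ones. You give no argument that a connected, $F$-invariant, essentially Euclidean $E'\subset E$ exists in general, and the paper's route sidesteps this question entirely.
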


We will give the proof of \cref{thm: 1} at the end of this section.
The proof is obtained by reducing to the essentially Euclidean case.
We first prove the essentially Euclidean case.
Assume now that $\cX=(X,E)$ is a crystal lattice which is essentially Euclidean,
and let the notations be as in \S\ref{subsec: EECL} and \S\ref{subsec: UL}.
The following algebraic lemma will be used in the proof.

\begin{lemma}\label{lem: splitting1}
	Let $\cM$ be a commutative monoid satisfying the cancellation property.
	In other words, for any $\alpha,\beta,\gamma\in\cM$, if 
	$\alpha+\beta=\alpha+\gamma$, then we have $\beta=\gamma$.
	Consider a pairing $H\colon \cM\times\cM\rightarrow\bbR$
	satisfying the cocycle condition
	\[
		H(\alpha,\beta)+H(\alpha+\beta,\gamma)
		=H(\beta,\gamma)+H(\alpha,\beta+\gamma).
	\]
	Assume either of the following conditions.
	\begin{enumerate}
		\item $H$ is \emph{symmetric}.  In other words $H(\alpha,\beta)=H(\beta,\alpha)$ for any $\alpha,\beta\in\cM$.
		\item $\cM\cong\bbN$ or $\cM\cong\bbZ$
		viewed as a commutative monoid with respect to addition.
	\end{enumerate}
	Then there exists a map 
	\[
		h\colon\cM\rightarrow\bbR
	\]
	such that $H(\alpha,\beta)=h(\alpha)+h(\beta) - h(\alpha+\beta)$ for any $\alpha,\beta\in\cM$.
\end{lemma}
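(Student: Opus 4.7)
The plan is to split into the two cases after a common normalization. First, since the constant cocycle equal to $H(0,0)$ is the coboundary of the constant function $\alpha\mapsto H(0,0)$, I may replace $H$ by $H-H(0,0)$ and assume $H(0,0)=0$. Substituting $\beta=0$ into the cocycle condition gives $H(\alpha,0)+H(\alpha,\gamma)=H(0,\gamma)+H(\alpha,\gamma)$, whence $H(\alpha,0)=H(0,\gamma)$ for all $\alpha,\gamma$, and hence $H(\alpha,0)=H(0,\alpha)=0$ for every $\alpha\in\cM$.

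For case (2) with $\cM\cong\bbN$ or $\bbZ$, I define $h(0):=0$ and recursively $h(m+1):=h(m)-H(m,1)$, extending by $h(m-1):=h(m)+H(m-1,1)$ when $\cM\cong\bbZ$; note that $h(1)=-H(0,1)=0$, and by construction $H(m,1)=h(m)+h(1)-h(m+1)$ for every relevant $m$. The desired identity $H(\alpha,\beta)=h(\alpha)+h(\beta)-h(\alpha+\beta)$ then follows by induction on $|\beta|$, the base case $\beta=0$ being immediate. In the inductive step, one applies the cocycle with $\gamma=1$ (forward) or $\gamma=-1$ (backward). For the backward case one first derives, from the cocycle $H(\alpha,-1)+H(\alpha-1,1)=H(-1,1)+H(\alpha,0)$, the identity $H(\alpha,-1)=h(\alpha)+h(-1)-h(\alpha-1)$, after which the general backward step reduces to elementary algebra.

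For case (1) with $H$ symmetric, I realize $H$ as the classifying cocycle of an abelian extension. Define $E:=\bbR\times\cM$ with the twisted operation $(r,\alpha)\oplus(s,\beta):=(r+s+H(\alpha,\beta),\alpha+\beta)$. The cocycle condition yields associativity, symmetry gives commutativity, the normalization $H(0,\alpha)=0$ makes $(0,0)$ an identity, and cancellation in $\cM$ and $\bbR$ implies cancellation in $E$. Hence $E$ embeds into its Grothendieck group $E^{\mathrm{gp}}$, and the homomorphism $(r,\alpha)\mapsto\alpha$ extends to a short exact sequence of abelian groups
\[
    0\to\bbR\to E^{\mathrm{gp}}\to\cM^{\mathrm{gp}}\to 0,
\]
where $\bbR$ sits inside $E$ as $r\mapsto(r,0)$. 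Since $\bbR$ is divisible, hence injective as a $\bbZ$-module, this sequence splits: there is a homomorphism $\sigma\colon\cM^{\mathrm{gp}}\to E^{\mathrm{gp}}$ lifting the identity. For each $\alpha\in\cM$ the difference $\sigma(\alpha)-(0,\alpha)$ lies in the kernel $\bbR$, so $\sigma(\alpha)=(-h(\alpha),\alpha)$ inside $E\subset E^{\mathrm{gp}}$ for a unique $h(\alpha)\in\bbR$. The homomorphism identity $\sigma(\alpha)\oplus\sigma(\beta)=\sigma(\alpha+\beta)$ then unpacks to $-h(\alpha)-h(\beta)+H(\alpha,\beta)=-h(\alpha+\beta)$, which is exactly the required formula.

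The main obstacle lies in case (1): I must verify carefully the monoid-theoretic steps, namely that $E$ is cancellative in both coordinates, that passing to the Grothendieck group yields an honest short exact sequence of abelian groups with $\bbR$ injecting and the cokernel identified with $\cM^{\mathrm{gp}}$, and that the restriction of the splitting to $\cM\subset\cM^{\mathrm{gp}}$ lands inside the image of $E$ in $E^{\mathrm{gp}}$ so that $h$ can be extracted as a genuine function $\cM\to\bbR$. The rest is essentially bookkeeping with the twisted addition. In case (2), the only subtlety is to bootstrap the formula to negative indices via the auxiliary identity for $H(\alpha,-1)$.
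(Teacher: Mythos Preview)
Your proof is correct. The paper's own proof simply cites \cite{BKS20}*{Lemmas 5.5 and 5.6} and remarks that the underlying idea is that the extension of monoids $0\to\bbR\to\sE\to\cM\to 0$ defined by the cocycle $H$ splits; your treatment of case (i) implements precisely this idea in full detail (building the twisted monoid, passing to the Grothendieck group, and invoking injectivity of $\bbR$), while for case (ii) you give a direct recursive construction of $h$ along the generator $1$, which is an equally standard and slightly more elementary route than appealing to the extension picture. The one point worth flagging is the step you yourself identified as delicate --- that $\sigma(\alpha)$ lands in the image of $E$ inside $E^{\mathrm{gp}}$ --- and your argument handles it correctly: since $\sigma(\alpha)-(0,\alpha)$ lies in $\bbR\subset E\subset E^{\mathrm{gp}}$ and $(0,\alpha)\oplus(r,0)=(r,\alpha)\in E$, the section restricted to $\cM$ does take values in $E$, so $h$ is well-defined on $\cM$.
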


\begin{proof}
	(i) is exactly  \cite{BKS20}*{Lemma 5.5} and
	(ii) is exactly \cite{BKS20}*{Lemma 5.6}.
	The idea behind the proof is that the conditions implies that the extension of monoids
	\[
		\xymatrix{
			0\ar[r]&\R\ar[r]&\sE\ar[r]& \cM\ar[r]&0
		}
	\]
	defined by the cocycle $H$ is split.
	See also \cite{BKS20}*{Remark 5.7}.
\end{proof}

If $\dim\cX>1$, then we may prove that the pairing $h_f$ of \eqref{eq: pair} is symmetric as follows:

\begin{lemma}\label{lem: symmetric}
	Assume that $\cX$ is essentially Euclidean and $\dim\cX>1$.
	Then for any $f\in C(S^X_*)$ such that $\partial_{\cX}f\in C^1_\unif(S^X)$,
	the pairing
	\[
		h_f\colon\cM\times\cM\rightarrow\R
	\]
	of \cref{def: pairing} is symmetric. 
\end{lemma}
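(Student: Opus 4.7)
The plan is to derive symmetry of $h_f$ as an immediate consequence of the identity \eqref{eq: sym}, once we observe that the defining index set $\sB_R^k$ is itself stable under swapping the two coordinates. All the real work has already been done in setting up \cref{def: pairing} and proving its well-definedness, so the argument is short.

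Fix $\alpha,\beta\in\cM$ and let $k\in\N$ be large enough so that $\alpha,\beta\in\cM_k$. The first observation is that if $(D_1,D_2)\in\sB_R^k$, then $(D_2,D_1)\in\sB_R^k$ as well, because both defining conditions $D_1,D_2\in\sD^k$ and $d_{\cS}(D_1,D_2)>R$ are symmetric in the pair. Consequently, the pair $(D_2,D_1)$ is a legitimate choice for computing $h_f(\beta,\alpha)$ via \cref{def: pairing}.

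Now by definition of $h_f$ I would write
\[
    h_f(\alpha,\beta)=h^{D_1,D_2}_f(\alpha,\beta),\qquad h_f(\beta,\alpha)=h^{D_2,D_1}_f(\beta,\alpha),
\]
where for the second equality I have used the independence of the choice of representative in $\sB_R^k$. The symmetry relation \eqref{eq: sym} then gives $h^{D_1,D_2}_f(\alpha,\beta)=h^{D_2,D_1}_f(\beta,\alpha)$, so the two right-hand sides agree, and $h_f(\alpha,\beta)=h_f(\beta,\alpha)$.

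The only nontrivial input is the independence of the choice of $(D_1,D_2)\in\sB_R^k$ established after \cref{def: pairing}; this is precisely where the hypothesis $\dim\cX>1$ enters, since it is used to guarantee that $X\setminus\sB(x,R)$ is connected and thus that any two admissible pairs can be connected by the relation $\leftrightarrow$. In the one-dimensional case the complement splits into two infinite components and the pairs $(D_1,D_2)$ and $(D_2,D_1)$ need not be related by $\leftrightarrow$, which is precisely why the lemma is restricted to $\dim\cX>1$.
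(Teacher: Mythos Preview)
Your argument is correct and is in essence the paper's proof with one layer of packaging removed: you invoke the independence of the choice of $(D_1,D_2)\in\sB_R^k$ established after \cref{def: pairing} to conclude $h_f(\beta,\alpha)=h_f^{D_2,D_1}(\beta,\alpha)$, and then apply \eqref{eq: sym}. The paper instead unfolds that independence in the specific instance needed, exhibiting the explicit chain
\[
(D_1,D_2)\leftrightarrow(D_1,D_3)\leftrightarrow(D_2,D_3)\leftrightarrow(D_2,D_1)
\]
through a third ball $D_3$, so that the role of $\dim\cX>1$ (namely, connectedness of $X\setminus\sB(\cdot,R)$ at each $\leftrightarrow$-step) is visible and self-contained.

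Your final paragraph is a sharp observation: the well-definedness discussion after \cref{def: pairing} does not flag the hypothesis $\dim\cX>1$, yet the $\leftrightarrow$-chain given there only goes through when complements of block balls are connected; in dimension one the relative order of the two balls is a $\leftrightarrow$-invariant, so $(D_1,D_2)$ and $(D_2,D_1)$ are genuinely not $\leftrightarrow$-related and your shortcut would fail. Thus the two proofs are logically equivalent, but the paper's version has the advantage of making explicit precisely where and how $\dim\cX>1$ is used, rather than pushing that burden back onto the earlier (and, as you note, somewhat imprecise in dimension one) well-definedness argument.
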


\begin{proof}
	Let $R>0$ be an integer such that $\partial_{\cX}f\in C^1_R(S^X)$.
	For any integer $k>0$, consider $\alpha,\beta\in\cM_k$.
	If we take $D_1,D_2,D_3$ such that $d_{\cS}(D_i,D_j)>R$ for $i\neq j$,
	then since $\dim\cX>1$, we see that $(D_i,D_j)\in\sD^k_R$ for $i\neq j$, and
	\[
		(D_1,D_2)\leftrightarrow(D_1,D_3)\leftrightarrow(D_2,D_3)\leftrightarrow(D_2,D_1).
	\]
	By \cref{lem: independence} and \eqref{eq: sym}, we see that
	\[
		h^{D_1,D_2}_f(\alpha,\beta)
		=h^{D_1,D_3}_f(\alpha,\beta)
		=h^{D_2,D_3}_f(\alpha,\beta)
		=h^{D_3,D_2}_f(\beta,\alpha)
		=h^{D_1,D_2}_f(\beta,\alpha).
	\]
 	The definition of $h_f$ gives
	\[
		h_f(\alpha,\beta)=h^{D_1,D_2}_f(\alpha,\beta)=h^{D_1,D_2}_f(\beta,\alpha)=h_f(\beta,\alpha)
	\]
	as desired.
\end{proof}

We may now prove the following theorem, which is \cref{thm: 1} for essentially Euclidean crystal lattices.

\begin{theorem}\label{thm: 0}
	Let $\cX=(X,E)$ be a crystal lattice which is essentially Euclidean, and let $(S,\phi)$
	be an interaction which is irreducibly quantified.  We assume in addition that $(S,\phi)$
	is simple if $\dim\cX=1$.   Then the differential
	\[
		\partial_{\cX}\colon C_\unif(S^X)\rightarrow Z^1_\unif(S^X)
	\]
	is surjective.
\end{theorem}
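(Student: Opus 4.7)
Given any $\omega\in Z^1_\unif(S^X)\subset C^1_R(S^X)$ for some integer $R>0$, the goal is to produce a \emph{uniform} primitive for $\omega$. By \cref{lem: exact}, there exists some $f\in C(S^X_*)$ with $\partial_{\cX}f=\omega$, and in particular $\partial_{\cX}f\in C^1_R(S^X)$. By \cref{thm: old}, the only freedom in the choice of such a primitive is the addition of an $\R$-linear combination of conserved-charge functions, since these lie in the kernel of $\partial_{\cX}$. The plan is to exploit this freedom to correct $f$ into a uniform function, guided by \cref{prop: criterion}.

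Attach to $f$ the cocycle $h_f\colon\cM\times\cM\to\R$ of \cref{def: pairing}; by \cref{prop: cocycle} it satisfies the cocycle identity. I would then apply \cref{lem: splitting1} to obtain a map $h\colon\cM\to\R$ with
\[
h_f(\alpha,\beta)=h(\alpha)+h(\beta)-h(\alpha+\beta)\qquad\forall\alpha,\beta\in\cM.
\]
The two cases of \cref{lem: splitting1} correspond exactly to the two cases of the theorem: if $\dim\cX>1$, then \cref{lem: symmetric} makes $h_f$ symmetric, so part (i) applies; if $\dim\cX=1$, symmetry can genuinely fail because $X\setminus\sB(x,R)$ splits into two connected components, and the simplicity hypothesis is precisely what forces $c_\phi=1$ together with $\xi(S)$ generating $\N$ or $\Z$ as a monoid, so that $\cM\cong\N$ or $\Z$ and part (ii) applies.

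Next define $\tilde f\coloneqq f+h\circ\bsxi_X$, where $\bsxi_X\colon S^X_*\to\Hom_\R(\C{S},\R)$ is the map of \eqref{eq: bsxi} whose image is $\cM$. Since conserved quantities are preserved by the interaction, $\bsxi_X(\e^e)=\bsxi_X(\e)$ for every $e\in E$, so $\partial_{\cX}(h\circ\bsxi_X)=0$ and hence $\partial_{\cX}\tilde f=\omega$. A direct computation using the expansion of \cref{prop:BKS203.3} and the fact that $(\e|_\La)_x=*$ outside $\La$ gives $\iota^\La(h\circ\bsxi_X)(\e)=h(\bsxi_{\!\La}(\e))$; since $\La_1\cap\La_2=\emptyset$ whenever $d_{\cS}(\La_1,\La_2)>R$, one obtains
\[
h^{\La_1,\La_2}_{h\circ\bsxi_X}(\alpha,\beta)=h(\alpha+\beta)-h(\alpha)-h(\beta)=-h_f(\alpha,\beta),
\]
and therefore $h_{\tilde f}\equiv 0$ on $\cM\times\cM$. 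By \cref{prop: criterion}, $\tilde f\in C_\unif(S^X)$, giving the desired uniform primitive and hence surjectivity.

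The main obstacle is the one-dimensional case. When $\dim\cX=1$ the complement of any block ball has two connected components, so the symmetry trick of \cref{lem: symmetric} fails and one must handle a genuinely non-symmetric cocycle; this is where the simplicity assumption is indispensable, as it shrinks $\cM$ to $\N$ or $\Z$ and lets \cref{lem: splitting1}(ii) do the work. Beyond this algebraic issue, the proof is a clean assembly of the machinery built in \S\ref{subsec: EECL} and \S\ref{subsec: UL}: the cocycle $h_f$ records the obstruction to uniformity, splitting it produces a correction term that lives in the kernel of $\partial_{\cX}$, and \cref{prop: criterion} delivers the conclusion.
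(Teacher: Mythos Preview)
Your proof is correct and follows exactly the paper's approach: lift $\omega$ to some $f\in C(S^X_*)$, form the cocycle $h_f$, split it via \cref{lem: splitting1} (using \cref{lem: symmetric} when $\dim\cX>1$ and the simplicity hypothesis when $\dim\cX=1$), add $h\circ\bsxi_X$ to kill the pairing, and conclude with \cref{prop: criterion}. One minor imprecision: the kernel of $\partial_\cX$ on all of $C(S^X_*)$ consists of \emph{arbitrary} functions of $\bsxi_X$, not merely $\R$-linear combinations of conserved quantities (\cref{thm: old} describes the kernel only inside $C^0_\unif(S^X)$); this is precisely why the generally nonlinear correction $h\circ\bsxi_X$ is permissible, and your argument handles this correctly despite the misstatement.
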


\begin{proof}
	Suppose $\omega\in Z_\unif(S^X) \subset Z^1(S^X_*)$.  
	Since $\omega\in Z^1(S^X_*)$, by \cref{lem: exact}, there exists $f\in C(S^X_*)$
	such that $\partial_{\cX} f=\omega$.  Furthermore, since $\omega\in C^1_\unif(S^X)$,
	there exists $R>0$ such that $\partial_{\cX} f=\omega\in C^1_R(S^X)$.
	Let $h_f\colon \cM\times\cM\rightarrow\bbR$ 
	be the pairing of \cref{def: pairing} which by \cref{prop: cocycle}
	satisfying the cocycle  condition \eqref{eq: cocycle}.
	Note that if $\dim\cX>1$, then by  \cref{lem: symmetric}, the pairing $h_f$ is symmetric.
	Hence by Lemma \ref{lem: splitting1}, 
	for both the cases $\dim\cX=1$ and $\dim\cX>1$,
	there exists $h\colon\cM\rightarrow\bbR$ such that 
	\begin{equation}\label{eq: coboundary}
		h_f(\alpha,\beta)=h(\alpha)+h(\beta)-h(\alpha+\beta)
	\end{equation}
	for any $\alpha,\beta\in\cM$.
	We define the function $g\in C(S^X_*)$ by 
	\[
		g\coloneqq f+h\circ \bsxi_{\!X}.
	\] 
	We will prove that $g$ is uniform.
	By Theorem \ref{thm: old}, for any $\xi\in\C{S}$, we have $\partial_{\cX}\xi_X=0$.
	This implies that $\nabe\xi_X=0$, hence
	$\xi_X(\e^e)=\xi_X(\e)$ for any $e\in E$.
	This shows that $h\circ \bsxi_{\!X}(\e^e)= h\circ \bsxi_{\!X}(\e)$ for any $e\in E$,
	hence $\nabe(h\circ \bsxi_{\!X})=0$, which implies that $\partial_{\cX}(h\circ \bsxi_{\!X})=0$.
	This gives $\partial_{\cX} g=\partial_{\cX} f=\omega$.	
	Furthermore, noting that 
	$\iota^{D_1}(h\circ \bsxi_{\!X}(\e))=h\circ \bsxi_{\!D_1}(\e)$ for any $D_1\subset X$, 
	we have
	\begin{align*}
		\iota^{D_1\cup D_2}g(\e)-\iota^{D_1}g(\e)-\iota^{D_2}g(\e)
		&=\iota^{D_1\cup D_2} f(\e)-\iota^{D_1} f(\e)-\iota^{D_2}f(\e) \\
		&\qquad+h\circ\bsxi_{\!D_1\cup D_2}(\e)- h\circ\bsxi_{\!D_1}(\e)-h\circ\bsxi_{\!D_2}(\e)=0
	\end{align*}
	for any $(D_1,D_2)$ such that $d_{\cS}(D_1,D_2)>R$ and $\e\in S^X$, 
	where we have used the coboundary condition \eqref{eq: coboundary} and the fact that
	$\bsxi_{\!D_1\cup D_2}(\e) = \bsxi_{\!D_1}(\e)+\bsxi_{\!D_2}(\e)$.	
	From the definition of the pairing given in \cref{def: pairing}, 
	we see that $h_g\equiv0$. 
	Hence by Proposition \ref{prop: criterion}, 
	we see that $g\in C_\unif(S^X)$.
	Our assertion is now proved by replacing $f$ by $g$.
\end{proof}

We now return to the case of a general crystal lattice $\cX=(X,E)$.
In order to reduce the proof of \cref{thm: 1} to the essentially Euclidean case,
we first prove the existence of a canonical isomorphism
between the spaces of uniform functions for equivalent locales,
as well as between the spaces of closed uniform forms.

\begin{definition}\label{def: equivalent}
	Given two locales $\cX=(X,E)$ and $\cX'=(X',E')$
	we say that $\cX$ and $\cX'$ are \emph{equivalent},
	if $X=X'$ and there exists constants $C,C'\geq1$ such that
	\begin{align}\label{eq: equivalent}
		 d_{\cX}(x,y)& \leq C d_{\cX'}(x,y),&
		\qquad d_{\cX'}(x,y) &\leq  C' d_{\cX}(x,y)
	\end{align}
	for any  $x,y\in X$. 
\end{definition}

We next prove that the spaces of 
uniform functions and uniform closed forms are isomorphic for equivalent locales.

\begin{lemma}\label{lem: equivalent}
 	Let $\cX=(X,E)$ and $\cX'=(X',E')$ be locales
	which are equivalent in the sense of  \cref{def: equivalent}.
	Assume in addition that $(S,\phi)$ is an interaction which is irreducibly quantified.
	Then there exists canonical isomorphisms
	\begin{align*}
		C^0_\unif(S^{X})&\cong C^0_\unif(S^{X'}),&
		Z^1_\unif(S^{X})&\cong Z^1_\unif(S^{X'})
	\end{align*}
	which are compatible with the differentials $\partial_{\cX}$ and $\partial_{X'}$.
	In other words, we have a commutative diagram
	\[
		\xymatrix{
			C^0_\unif(S^{X})\ar[r]^{\partial_{\cX}}\ar[d]^\cong&Z^1_\unif(S^{X})\ar[d]^\cong\\
			C^0_\unif(S^{X'})\ar[r]^{\partial_{\cX'}}&Z^1_\unif(S^{X'}).
		}
	\]
\end{lemma}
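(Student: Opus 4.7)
The plan is to identify the two uniform function spaces as the same subspace of $C(S^X_*)$, and then to transport closed uniform forms between $\cX$ and $\cX'$ by choosing an antiderivative $f$ and taking $\partial_{\cX'} f$.

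Write $\sI_{\cX,R}$ and $\sI_{\cX',R}$ for the subsets of $\sI_X$ whose diameter with respect to $d_\cX$ or $d_{\cX'}$ is at most $R$. The equivalence of distances \eqref{eq: equivalent} gives $\sI_{\cX,R}\subseteq \sI_{\cX',C'R}$ and $\sI_{\cX',R}\subseteq \sI_{\cX,CR}$, so \cref{def: UL} yields the literal equality $C_\unif(S^X)=C_\unif(S^{X'})$, which restricts to the isomorphism of $C^0_\unif$ spaces. For closed forms, given $\omega\in Z^1_\unif(S^X)$, use \cref{lem: exact} to pick $f\in C(S^X_*)$ with $\partial_{\cX}f=\omega$ and set $\omega'\coloneqq\partial_{\cX'}f$, which is automatically closed. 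To see $\omega\mapsto\omega'$ is well defined, I would verify $\Ker\partial_{\cX}\subseteq\Ker\partial_{\cX'}$ inside $C(S^X_*)$: if $\partial_\cX g=0$, then for any $e'\in E'$ and $\eta\in S^X_*$, the configurations $\eta$ and $\eta^{e'}$ differ only on $\{o(e'),t(e')\}$ and satisfy $\bsxi_{\!\La}(\eta)=\bsxi_{\!\La}(\eta^{e'})$ for any finite connected $\La\subseteq X$ containing $\{o(e'),t(e')\}$ (since $\phi$ preserves every $\xi\in\C{S}$). The irreducibly quantified hypothesis applied to the finite locale $(\La,E_\La)$ then supplies a path in $S^\La$ made of $\cX$-edges from $\eta|_\La$ to $\eta^{e'}|_\La$; lifting and telescoping gives $g(\eta^{e'})=g(\eta)$, hence $\partial_{\cX'}g=0$. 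The symmetric construction (swapping $\cX\leftrightarrow\cX'$) provides the inverse, giving the isomorphism $Z^1_\unif(S^X)\cong Z^1_\unif(S^{X'})$.

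The main technical step is showing that $\omega'$ is uniform. Assume $\omega\in C^1_R(S^X)$, so $\nabla_e f(\eta)$ depends only on $\eta|_{B(o(e),R)}$ for every $e\in E$, where $B(\cdot,\cdot)$ denotes the $d_\cX$-ball. Fix $e'\in E'$ with $o(e')\neq t(e')$ (the case $o(e')=t(e')$ gives $\nabla_{e'}f=0$). By \eqref{eq: equivalent} we have $d_\cX(o(e'),t(e'))\leq C\,d_{\cX'}(o(e'),t(e'))=C$, so the vertex set $\La$ of a shortest $\cX$-path joining $o(e')$ to $t(e')$ has $d_\cX$-diameter at most $C$, and $(\La,E_\La)$ is a finite connected multi-graph. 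Since $\bsxi_{\!\La}(\eta|_\La)=\bsxi_{\!\La}(\eta^{e'}|_\La)$, the irreducibly quantified hypothesis applied to $(\La,E_\La)$ supplies a path $(e_1,\ldots,e_N)$ in $S^\La$ from $\eta|_\La$ to $\eta^{e'}|_\La$. Lifting to $S^X_*$ and telescoping,
\[
	f(\eta^{e'})-f(\eta)=\sum_{i=1}^N \nabla_{e_i}f(\eta_i).
\]
For each $i$, $o(e_i)\in\La\subseteq B(o(e'),C)$, so $B(o(e_i),R)\subseteq B(o(e'),C+R)$ and the $i$-th summand depends only on $\eta_i|_{B(o(e'),C+R)}$; since every intermediate transition occurs inside $\La\subseteq B(o(e'),C+R)$, the full sum depends only on $\eta|_{B(o(e'),C+R)}$. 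Applying \eqref{eq: equivalent} once more, $B(o(e'),C+R)\subseteq\{y\in X \mid d_{\cX'}(o(e'),y)\leq C'(C+R)\}$, so $\omega'\in C^1_{R'}(S^{X'})$ with $R'=C'(C+R)$ independent of $e'$.

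Compatibility with the differentials is immediate from the construction: the vertical arrow on uniform functions is the identity and the vertical arrow on forms sends $\partial_\cX f$ to $\partial_{\cX'} f$. The principal obstacle is the uniformity step, where the length $N$ of the path furnished by the irreducibly quantified hypothesis is not controlled in terms of $R$ alone; the argument must exploit that all intermediate transitions stay within the small vertex set $\La$ of $d_\cX$-diameter at most $C$, so that the common locality radius $R$ of the $\nabla_{e_i}f$'s combines with the bound on $\La$ to yield a uniform $\cX'$-radius.
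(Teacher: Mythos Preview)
Your proof is correct and close in spirit to the paper's, but the route differs in one structural choice. The paper introduces an auxiliary locale $\cX''=(X,E\sqcup E')$ containing both edge sets, proves $\Ker\partial_{\cX}=\Ker\partial_{\cX'}=\Ker\partial_{\cX''}$ by identifying the connected components of $S^X_*$ with the fibres of $\bsxi_X$ (which is edge-independent), and then reads off $Z^1(S^X_*)\cong C(S^X_*)/\Ker\partial_{\cX}\cong Z^1(S^{X'}_*)$ via \cref{lem: exact}. You instead construct the map directly as $\partial_{\cX}f\mapsto\partial_{\cX'}f$ and verify the kernel inclusion by hand using irreducibly quantified; this is the same content without the detour through $\cX''$.

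On the uniformity of $\omega'$, the paper says only that balls in $\cX$ and $\cX'$ are comparable, leaving the reader to supply the mechanism. Your telescoping argument along an $\cX$-path inside a finite connected $\La$ of $d_{\cX}$-diameter at most $C$ is exactly what is needed to make that sentence rigorous, and your observation that the (uncontrolled) length $N$ of the path is harmless because all intermediate configurations stay inside $\La\subset B(o(e'),C)$ is the correct point. So your version is a bit more explicit where the paper is terse; the $\cX''$ device in the paper buys a symmetric formulation but is not essential.
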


\begin{proof}
	Since $\cX$ and $\cX'$ are equivalent,  we have $X=X'$ and
	there exists constants $C,C\geq1$ such that
	$d_{\cX}(x,y)\leq Cd_{\cX'}(x,y)$ and $d_{\cX'}(x,y)\leq C'd_{\cX}(x,y)$ for any $x,y\in X$.
	This gives
	\begin{align*}
		\diam_{\cX}(\La)& \leq C\diam_{\cX'}(\La), &
		\diam_{\cX'}(\La) &\leq C' \diam_{\cX}(\La) 
	\end{align*}
	for any  $\La\in\sI_X$, which shows that $C^0_\unif(S^X)=C^0_\unif(S^{X'})$ by 
	definition of uniform functions.  
	
	Next, let $E''\coloneqq E\cup E'$ be the disjoint union of $E$ and $E'$, and we let $o,t\colon E''\rightarrow X$
	be the map induced from the origin and target maps on $E$ and $E'$.  Then $E''$ has a natural inversion
	$\iota$
	induced from the inversions on $E$ and $E'$, and we see that $\cX''=(X,E'',o,t,\iota)$ is a locale.
	Since we have natural inclusions $E,E'\hookrightarrow E''$,
	we have	
	\[
		d_{\cX''}(x,y)\leq d_{\cX}(x,y), d_{\cX'}(x,y)
	\]
	for any $x,y\in X$.  Moreover, by construction, we have
	\begin{align*}
		d_{\cX}(x,y)&\leq C d_{\cX''}(x,y), &
		d_{\cX'}(x,y)&\leq C' d_{\cX''}(x,y)
	\end{align*}
	for any $x,y\in X$, hence $\cX''$ is equivalent to both $\cX$ and $\cX'$.
	This shows in particular that $C^0_\unif(S^{X''})=C^0_\unif(S^{X})=C^0_\unif(S^{X'})$
	for $X''=X$.
	The natural morphisms $\cX,\cX'\hookrightarrow\cX''$
	of multi-graphs induces projections 
	\begin{align}\label{eq: projection}
		C^1(S^{X''})&\rightarrow C^1(S^{X}), &
		C^1(S^{X''})&\rightarrow C^1(S^{X'})
	\end{align}
	given through \eqref{eq: inclusion}
	by the natural projections $\prod_{e\in E''}C(S^{X''}_*)\rightarrow\prod_{e\in E}C(S^{X}_*)$
	and $\prod_{e\in E''}C(S^{X''}_*)\rightarrow\prod_{e\in E'}C(S^{X'}_*)$
	which are compatible with the differentials. 
	
	By definition of the differential,
	the subspaces $ \Ker\partial_{\cX},  \Ker\partial_{\cX'},\Ker\partial_{\cX''}$
	of $C^0(S^{X}_*)$ coincide with the functions on $S^X_*$ which are constant
	on the connected components of respectively $S^X_*$, $S^{X'}_*$, $S^{X''}_*$.
	If we let
	\[
		\bsxi_{\!X}\colon S^X_*\rightarrow\Hom_\R(\C{S},\R), \qquad
		\e\mapsto (\xi \mapsto\xi_X(\e))
	\]
	be the map given in \eqref{eq: bsxi}, then since we have assumed that the
	interaction $(S,\phi)$ is irreducibly quantified, the connected components 
	of $S^X_*$ corresponds bijectively with the elements in $\cM=\bsxi_X(S^X_*)$
	(see \cite{BKS20}*{Remark 2.33}).  Since the definition of the map $\bsxi_{\!X}$
	depends only on $X$ and is independent of the edges of the locales $\cX$, $\cX'$, $\cX''$,
	we see that the connected components of $S^X_*$, $S^{X'}_*$, $S^{X''}_*$ coincide.
	This implies that
	\[
		\Ker\partial_{\cX}=\Ker\partial_{\cX'}=\Ker\partial_{\cX''}.
	\]
	Using this fact, \cref{lem: exact} shows that the projections \eqref{eq: projection} induce isomorphisms
	\[
		Z^1(S^{X'}_*)\cong C(S^{X'}_*)/\Ker\partial_{\cX'}=C(S^{X''}_*)/\Ker\partial_{\cX''}
		=C(S^{X}_*)/\Ker\partial_{\cX}
		\cong Z^1(S^{X}_*).
	\]
	Our assertion for uniform forms follows from the fact that
	since $\cX$ and $\cX'$ are equivalent, for any $R\geq 0$, there exists $R'\geq0$
	sufficiently large that any ball of radius $R\geq 0$ in $\cX$
	is included in a ball with same center of radius $R'\geq 0$ in $\cX'$, and vice versa.
\end{proof}

Crystal lattices for the same group $G$ are equivalent.

\begin{lemma}\label{lem: compare}
	Let $\cX=(X,E)$ and $\cX'=(X',E')$ be crystal lattices for the action of $G$
	such that $X'=X$.  Then $\cX$ and $\cX'$ are equivalent.
\end{lemma}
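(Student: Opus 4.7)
The plan is to reduce the comparison of distances to a comparison on a finite set of edge-orbit representatives, using the compactness coming from finiteness of the quotient crystal. First I would choose, for each crystal lattice, a system of representatives for the $G$-orbits on the edge set. Since $\cX/G$ and $\cX'/G$ are crystals (in particular, finite multi-graphs), the sets $E/G$ and $E'/G$ are finite, so we may fix finitely many representatives $e_1,\ldots,e_m$ of $E/G$ in $E$ and finitely many representatives $e'_1,\ldots,e'_{m'}$ of $E'/G$ in $E'$.

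Next I would exploit the fact that $\cX'=(X,E')$ is connected with the same vertex set $X=X'$ to produce, for each $i=1,\ldots,m$, a path $\vec p_i$ in $\cX'$ from $o(e_i)\in X$ to $t(e_i)\in X$. Set $L\coloneqq\max_{i}\len(\vec p_i)$, which is finite since $m$ is finite. Now given any edge $e\in E$, there exists $\sigma\in G$ and some representative $e_i$ such that $e=\sigma_E(e_i)$. Because the $G$-action on $X$ (which is the same underlying action used for both lattices) extends to an automorphism of $\cX'$, the translated sequence $\sigma\cdot\vec p_i$ is a path in $\cX'$ from $o(e)$ to $t(e)$ of length at most $L$. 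Concatenating these simulations along any path in $\cX$ from $x$ to $y$ of length $d_{\cX}(x,y)$ yields a path in $\cX'$ from $x$ to $y$ of length at most $L\cdot d_{\cX}(x,y)$, giving $d_{\cX'}(x,y)\leq L\,d_{\cX}(x,y)$.

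Applying the symmetric argument with the roles of $\cX$ and $\cX'$ exchanged, and using the representatives $e'_j$ together with paths in $\cX$ between their endpoints, yields a constant $L'$ with $d_{\cX}(x,y)\leq L'\,d_{\cX'}(x,y)$. Taking $C\coloneqq L'$ and $C'\coloneqq L$ (and replacing them by $\max(C,1)$, $\max(C',1)$ if necessary) verifies the two inequalities in \eqref{eq: equivalent}, so $\cX$ and $\cX'$ are equivalent locales in the sense of \cref{def: equivalent}.

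The only place one has to be slightly careful is in checking that $\sigma\cdot\vec p_i$ is genuinely a path with the correct endpoints; this is precisely the statement that $G$ acts by multi-graph automorphisms on $\cX'$, so the origin, target, and inversion maps are $G$-equivariant. There is no real obstacle here, since the finiteness of the edge-orbit sets does all the work: the entire argument is a uniformization of the obvious fact that both lattices are connected locally finite graphs with the same vertex set and compatible cocompact group action.
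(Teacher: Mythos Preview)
Your argument is correct and is essentially the same as the paper's: both proofs choose a finite system of representatives for the $G$-orbits on edges, bound the distance in the other lattice between the endpoints of each representative, and then use $G$-equivariance to propagate this bound to every edge and hence to every path. The paper phrases the constants directly as $C=\max\{d_{\cX}(o(e'),t(e'))\}$ over representatives $e'$ of $E'/G$ (and symmetrically $C'$), while you package the same quantity as the maximal length of a chosen simulating path; these differ only cosmetically.
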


\begin{proof}
	We choose a representative $\La_0$ of the orbits of $X$ with respect to the action of $G$.
	Then the set $E_{\La_0}\cup\partial E_{\La_0}$ gives a representative of the orbits of
	$E$ for the action of $G$,
	and the set $E'_{\La_0}\cup\partial E'_{\La_0}$ gives a representative of the orbits of
	$E'$ for the action of $G$.
	We let
	\begin{align*}
		C&\coloneqq\max\{ d_{\cX}(o(e'),t(e')) \mid e'\in E'_{\La_0}\cup\partial E'_{\La_0} \},\\
		C'&\coloneqq\max\{ d_{\cX'}(o(e),t(e)) \mid e\in E_{\La_0}\cup\partial E_{\La_0} \}.
	\end{align*}
	Then for any $x,y\in X$, we have
	\begin{align*}
		d_{\cX}(x,y)&\leq C d_{\cX'}(x,y),&d_{\cX'}(x,y)&\leq C' d_{\cX}(x,y),
	\end{align*}
	which proves that $\cX$ and $\cX'$ are equivalent.
\end{proof} 

We now show that any crystal lattice is equivalent to a crystal lattice which is essentially Euclidean.

\begin{lemma}\label{lem: existence}
	Let $\cX=(X,E)$ be a crystal lattice for the action of a group $G$,
	Then if we let $F\subset G$ be the free part of $G$, then
	there exists a crystal lattice $\cX'=(X',E')$ for the action of $F$
	which is essentially Euclidean.
\end{lemma}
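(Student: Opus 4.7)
The plan is to construct $\cX'$ by hand, choosing an $F$-invariant edge set on the vertex set $X$ so that the hypothesis of \cref{lem: maximal} is met, and then invoke that lemma. Let $d = \dim\cX$ be the rank of $F$, and fix free generators $\sigma_1,\ldots,\sigma_d$ of $F$. Since $F$ has finite index in $G$ and $X/G$ is finite, the orbit set $X/F$ is also finite; choose a fundamental domain $\La_F \subset X$ for the $F$-action.

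Next I would define $E'$ as the $F$-orbit of two families of edges. First, fix any spanning tree on the abstract vertex set $\La_F$, consisting of $|\La_F|-1$ unordered pairs $\{x,y\}$ of distinct vertices; for each such pair introduce a pair of oppositely oriented edges $e,\bar e$ with $o(e)=x$, $t(e)=y$, $\iota(e)=\bar e$. Second, for each $j=1,\ldots,d$, fix a vertex $x_j\in\La_F$ and introduce a pair of edges $e_j,\bar e_j$ between $x_j$ and $\sigma_j(x_j)$. Let $E'$ be the union of all $F$-translates of these edges, with the naturally induced incidence and inversion maps. Then $\cX' \coloneqq (X,E')$ is a locally finite symmetric multi-graph on which $F$ acts freely.

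Three points then remain to verify. First, $\cX'$ is connected: the spanning-tree edges connect $\La_F$ into a single component, and combining them with the generator edges allows one to reach every translate $\tau(\La_F)$, $\tau\in F$, since $\sigma_1,\ldots,\sigma_d$ generate $F$. Second, $\cX'_0 \coloneqq \cX'/F$ is strictly symmetric: an identity $\bar e = \sigma(e)$ for some $\sigma\in F$ on one of our chosen edges would, because $F$ acts freely, force $\sigma^2 = \id$, which is impossible since $F$ is a free abelian group of positive rank and the edges chosen are not loops in $\cX'$. Third, one counts $|X'_0| = |\La_F|$ and $|E'_0| = 2(|\La_F|-1) + 2d$, so that $1 - |X'_0| + \tfrac12|E'_0| = d$. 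With these checks in hand, \cref{lem: maximal} applied to $\cX'$ for the action of $F$ immediately yields that $\cX'$ is essentially Euclidean, completing the proof.

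I do not anticipate a serious obstacle here: the construction is explicit and the verifications are essentially combinatorial. The only point requiring care is matching the Euler-type identity in \cref{lem: maximal} exactly -- this is why one must take precisely a spanning tree of $\La_F$ (no extra intra-$\La_F$ edges) together with exactly one generator pair per $\sigma_j$; adding any superfluous edges would push $1 - |X'_0| + \tfrac12|E'_0|$ strictly above $d$ and spoil the hypothesis of \cref{lem: maximal}.
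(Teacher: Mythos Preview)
Your proof is correct but proceeds differently from the paper's. The paper does not invoke \cref{lem: maximal}; instead it builds a much \emph{larger} edge set---the complete graph on $\La_F$ together with \emph{all} edges between $\La_F$ and each neighbouring translate $\sigma(\La_F)$ for $\sigma\in\{\sigma_1^{\pm1},\ldots,\sigma_d^{\pm1}\}$---and verifies the essentially Euclidean condition \eqref{eq: EEC} directly from the construction. Your approach is in a sense dual: you use the \emph{minimal} edge set (a spanning tree on $\La_F$ plus a single generator edge per $\sigma_j$) precisely so that the Euler identity $d=1-|X'_0|+\tfrac12|E'_0|$ holds on the nose, and then let \cref{lem: maximal} do the work. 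Each approach has its virtue: the paper's is self-contained and avoids the rigidity you note in your final paragraph, while yours nicely reuses an existing criterion. One minor point: the paper's proof also records that $\cX$ and $\cX'$ are equivalent (via \cref{lem: compare}), a fact that is used downstream in the proof of \cref{thm: 1} even though it is not part of the lemma statement; your construction also has $X'=X$ and $\cX'$ a crystal lattice for $F$, so the same appeal to \cref{lem: compare} works for you as well.
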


\begin{proof}
	We let $X'=X$.
	Note that the action of $F$ on $\cX$ induces an action of $F$ on $X$ and $X\times X$.
	We we fix a fundamental domain $\La_F$ of the action of $F$ on $X$.
	We fix a free generator $\cS^{+}=\{\sigma_1,\ldots,\sigma_d\}$ of $F$,
	and we let $\cS\coloneqq\{\sigma_1^{\pm},\ldots,\sigma_d^{\pm}\}$.
	We let $E'_{\La_F}\coloneqq\{e=(x,y)\in\La_F\times\La_F\mid x\neq y\}$
	so that $(\La_F,E'_{\La_F})$ is a complete graph, and we let
	\[
		\partial E'_{\La_F}\coloneqq\{e=(x,y)\in X\times X
		\mid x\in \La_F, \exists\sigma\in\cS,\, y\in\sigma(\La_F)\}.
	\]
	Consider the disjoint union
	\[
		E'\coloneqq \bigcup_{\tau\in F} \tau\bigl(E'_{\La_F}\cup\partial E'_{\La_F}\bigr)\subset X\times X
	\]
	with natural map $o,t\colon E'\rightarrow X$ given by the first and second projections.
	Then by construction,
	\[
		\cX'=(X,E')	
	\]
	is a graph which is a crystal lattice for the action of $F$.
	Moreover, by construction, $\cX'$ satisfies \eqref{eq: EEC}
	for the coordinate $(\La_F,\cS^+)$,
	hence is essentially Euclidean. 
	By \cref{lem: compare}, $\cX$ is equivalent to $\cX'$ 
	since both $\cX$ and $\cX'$ are crystal lattices for the action of $F$.	
	This proves our assertion.
\end{proof}

We are now ready to prove \cref{thm: 1}.

\begin{proof}[Proof of \cref{thm: 1}]
	By \cref{lem: existence}, there exists a crystal lattice $\cX'=(X',E')$
	which is essentially Euclidean and equivalent to $\cX$.
	Since $\cX'$ is essentially Euclidean, by \cref{thm: 0},
	the differential $\partial_{\cX'}$ of $\cX'$ is surjective.
	Our assertion now follows from \cref{lem: equivalent}.	
\end{proof}

%
\subsection{Proof of the Main Theorem}\label{subsec: proof}
%

We are now ready prove our main theorem, \cref{thm: main}.
Let the notations be as in \S\ref{subsec: main}.
In particular, let $\cX=(X,E)$ be a general crystal lattice for the action of a finite generated abelian group $G$,
and let $(S,\phi)$ be an interaction which is irreducibly quantified.
We assume in addition that $(S,\phi)$ is simple if $\dim\cX=1$.

\begin{lemma}
	We have $\cE\oplus\partial_{\cX}\cV\subset\cC$.
\end{lemma}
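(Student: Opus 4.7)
The plan is to verify two inclusions and then show that the sum is direct. First I would note that $\cE \subset \cC$ is almost tautological: if $f \in C^0_\unif(S^X)^G$, then $\partial_\cX f$ lies in $Z^1_\unif(S^X)$ because the differential \eqref{eq: differential} sends uniform functions to closed uniform forms, and it lies in the $G$-invariant subspace because the $G$-action is compatible with $\partial_\cX$. The inclusion $\partial_\cX \cV \subset \cC$ is literally the content of the last assertion of \cref{lem: tau}, so nothing new is needed.

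The substance of the lemma is the directness of the sum, i.e.\ $\cE \cap \partial_\cX \cV = \{0\}$. The plan is as follows. Suppose we have $f \in C^0_\unif(S^X)^G$ and $\frA = \sum_{j=1}^d \frA^j_{\zeta^{(j)}} \in \cV$ with $(\zeta^{(1)},\ldots,\zeta^{(d)}) \in \bigoplus_{j=1}^d \C{S}$ such that $\partial_\cX f = \partial_\cX \frA$. Then $\partial_\cX(f - \frA) = 0$, so by \cref{thm: old} there exists $\xi \in \C{S}$ with $f - \frA = \xi_X$ inside $C^0_\unif(S^X)$. Since $\xi_X$ is manifestly $G$-invariant (each $\xi_X(\e)$ depends symmetrically on all $\e_x$, $x \in X$) and $f$ is $G$-invariant by hypothesis, this forces $\frA = f - \xi_X$ to be $G$-invariant as well.

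Now apply $(1 - \sigma_k)$ for each $k = 1, \ldots, d$. On the one hand $(1 - \sigma_k)\frA = 0$ by the $G$-invariance just established. On the other hand, \cref{lem: tau} gives
\[
	(1 - \sigma_k)\frA = \sum_{j=1}^d (1 - \sigma_k)\frA^j_{\zeta^{(j)}} = \zeta^{(k)}_X.
\]
Combining these, $\zeta^{(k)}_X = 0$ for every $k$, and the injectivity of $\C{S} \hookrightarrow C^0_\unif(S^X)$ (via $\xi \mapsto \xi_X$) yields $\zeta^{(k)} = 0$ for all $k$. Hence $\frA = 0$ and $\partial_\cX \frA = 0$, which is the desired directness.

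There is no real obstacle here: the argument is a direct assembly of \cref{thm: old} (kernel is $\C{S}$) with the key identity of \cref{lem: tau} (the effect of $1 - \sigma_k$ on $\frA^j_\xi$). The only thing to be a little careful about is verifying the $G$-invariance of $\frA$, which is where the assumption $f \in C^0_\unif(S^X)^G$ is essential — without it, one could not separate the contributions of the different $\zeta^{(j)}$ by hitting with $1 - \sigma_k$.
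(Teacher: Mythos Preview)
Your proof is correct and follows essentially the same route as the paper's: both reduce to showing $\cE\cap\partial_{\cX}\cV=\{0\}$, use \cref{thm: old} to write $f-\frA=\xi_X$, and then apply the identity $(1-\sigma_k)\frA^j_{\zeta^{(j)}}=\delta_{jk}\,\zeta^{(k)}_X$ from \cref{lem: tau} together with the $G$-invariance of $f$ and $\xi_X$ to force each $\zeta^{(k)}=0$. Your version is slightly more explicit in verifying the two inclusions $\cE\subset\cC$ and $\partial_{\cX}\cV\subset\cC$, which the paper treats as already established.
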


\begin{proof}
	Let $\omega\in\cE\cap\partial_{\cX}\cV$.  
	Since $\omega\in\cE$, there exists $f\in C^0_\unif(S^X)^G$ such that
	$\partial_{\cX} f=\omega$.  Moreover, since $\omega\in\partial_{\cX}\cV$,
	by \cref{lem: cd},
	there exists $\zeta^{(1)},\ldots,\zeta^{(d)}\in\C{S}$
	such that $\partial_\cX f=\partial_\cX\frA$
	for $\frA=\sum_{j=1}^d\frA^j_{\zeta^{(j)}}\in\cV$.
	Since $\partial_\cX(f-\frA)=0$,
	by \cref{thm: old}, we have
	$f-\frA=\xi_X$ for some $\xi\in\C{S}$.
	We have $\xi_X\in C^0_\unif(S^X)^G$
	since $\xi_X$ is invariant with respect to the action of $G$.
	Noting that $f$ is also invariant with respect to the action of $G$, we have
	\[
		\zeta^{(j)}_X=(1-\sigma_j)\frA=(\sigma_j-1)(f-\frA)=(\sigma_j-1)\xi_X=0,
	\]
	which implies that $\zeta^{(j)}=0$ for $j=1,\ldots,d$,
	hence that $\omega=\partial_\cX\frA=0$ as desired.
\end{proof}

We now prove \cref{thm: main}.

\begin{proof}[Proof of \cref{thm: main}]
	Let $\omega\in\cC= Z^1_\unif(S^X)^G$ be a shift-invariant closed uniform form.
	By \cref{thm: 1}, there exists $f\in C^0_\unif(S^X)$ such that $\partial_{\cX}f=\omega$.
	For any $j=1,\ldots,d$, we have
	\[
		\partial_{\cX}((1-\sigma_j)f)=(1-\sigma_j)\partial_{\cX} f=(1-\sigma_j)\omega=0
	\]
	since $\omega$ is a shift-invariant form.  By \cref{thm: old}, we have
	\[
		(1-\sigma_j)f\in\Ker\partial_{\cX}\cong\C{S},
	\]
	hence $(1-\sigma_j)f=\zeta^{(j)}_X$ for some $\zeta^{(j)}\in\C{S}$.
	If we let
	\[
		g\coloneqq f-\sum_{j=1}^d\frA^j_{\zeta^{(j)}}\in C^0_\unif(S^X),
	\]
	then by \cref{lem: tau}, we have
	\[
		(1-\sigma_k)g=(1-\sigma_k)f-\sum_{j=1}^d (1-\sigma_k)\frA^j_{\zeta^{(j)}}
		=\zeta^{(k)}_X-\zeta^{(k)}_X=0
	\]
	for any $k=1,\ldots,d$.  This shows that $g\in C^0_\unif(S^X)^G$, hence we have
	\[
		\omega=\partial_{\cX} f= \partial_{\cX} g+\sum_{j=1}^d\frA^j_{\zeta^{(j)}}
		\in \cE\oplus\partial_{\cX}\cV.
	\]
	This proves our assertion.
\end{proof}

\begin{remark}
	By \cref{thm: old} and \cref{thm: 1}, for any $\cX=(X,E)$ and $(S,\phi)$ satisfying the 
	conditions of \cref{thm: 1}, we have an exact sequence
	\[
		\xymatrix{
			0\ar[r]&\C{S}\ar[r]&C^0_\unif(S^X)\ar[r]^{\partial_{\cX}}&Z^1_\unif(S^X)\ar[r]&0.
		}
	\]
	One may prove that
	the long  exact sequence associated to the group cohomology of $G$ gives an exact sequence
	\[
		\xymatrix{
		0\ar[r]&\C{S}\ar[r]&C^0_\unif(S^X)^G\ar[r]^{\partial_{\cX}}&Z^1_\unif(S^X)^G\ar[r]^-\delta& H^1(G, \C{S})\ar[r]&0.
		}
	\]
	Since $G$ acts trivially on $\C{S}$, we have 
	\[
		H^1(G, \C{S})\cong\Hom(G,\C{S})\cong\bigoplus_{j=1}^d\C{S}.
	\]
	The isomorphism	of \cref{lem: cd}
	\[
		\bigoplus_{j=1}^d\C{S}\xrightarrow\cong\partial_{\cX}\cV\subset Z^1_\unif(S^X)^G
	\]
	gives a section of  $\delta$ with image $\partial_{\cX}\cV$ giving the decomposition $\cC=\cE\oplus\partial_{\cX}\cV$
	of \cref{thm: main}.
\end{remark}

\begin{bibdiv}
	\begin{biblist}
		\bibselect{Bibliography}
	\end{biblist}
\end{bibdiv}


\end{document}